\documentclass{amsart}

\usepackage{amsthm}
\usepackage{amsfonts}
\usepackage{amssymb}

\newcommand{\C}{{\mathbb{C}}}

\newcommand{\Inv}[1]{{\mathcal{I}}_{#1}}
\newcommand{\Rn}{\Delta_{\mathfrak{n}^-}}
\newcommand{\ideal}[1]{\big( {#1} \big)}

\newtheorem{theorem}{Theorem}[section]
\newtheorem{corollary}[theorem]{Corollary}
\newtheorem{lemma}[theorem]{Lemma}
\newtheorem{proposition}[theorem]{Proposition}

\newtheorem{definition}[theorem]{Definition}
\newtheorem{remark}[theorem]{Remark}

\begin{document}

\title{Divided difference operators for partial flag varieties}

\author{Julianna S. Tymoczko}

\address{Department of Mathematics, University of Iowa, Iowa City, IA 52242}

\email{tymoczko@math.uiowa.edu}

\thanks{Keywords: Divided difference operators; Grassmannians; equivariant cohomology; Schubert polynomials.  MSC2000: 14M15, 05E15, 57R91. \\ \noindent
The author was partially supported by NSF grant 0402874.}

\begin{abstract}
Divided difference operators are degree-reducing operators on the cohomology of flag varieties that are used to compute algebraic invariants of the ring (for instance, structure constants).  We identify divided difference operators on the equivariant cohomology of $G/P$ for arbitrary partial flag varieties of arbitrary Lie type, and show how to use them in the ordinary cohomology of $G/P$.  

We provide three applications.  The first shows that all Schubert classes of partial flag varieties can be generated from a sequence of divided difference operators on the highest-degree Schubert class.  The second is a generalization of Billey's formula for the localizations of equivariant Schubert classes of flag varieties to arbitrary partial flag varieties.  The third gives a choice of Schubert polynomials for partial flag varieties as well as an explicit formula for each.  We focus on the example of maximal Grassmannians, including Grassmannians of $k$-planes in a complex $n$-dimensional vector space.\end{abstract}

\maketitle

\section{Introduction}
The divided difference operators of Bernstein-Gelfand-Gelfand and Demazure \cite{BGG}, \cite{D} are an essential tool in the Schubert calculus of flag varieties; a recent sample from diverse contexts includes \cite{Ma}, \cite{Pr}, \cite{LLM}, \cite{CLL}, \cite{Bi1}, \cite{KM}, \cite{F}.  This paper introduces nontrivial divided difference operators on the cohomology of partial flag varieties, including the classical Grassmannian of $k$-dimensional vector subspaces of $\mathbb{C}^n$.  The divided difference operators generate all Schubert classes from the top-degree Schubert class; they can be used to give a direct proof of a positive formula for localizations of equivariant Schubert classes; and the operators suggest a natural choice of Schubert polynomials, along with a closed formula for each polynomial.

The results in this paper apply to all partial flag varieties $G/P$, where $G$ is a semisimple complex linear algebraic group and $P$ is a parabolic subgroup containing a fixed Borel subgroup $B$.  For the sake of simplicity, this introduction presents the main results in the special case of Grassmannians of $k$-planes in $\C^n$.

The $i^{th}$ divided difference operator $\partial_i$ is a `discrete derivative' defined on the polynomial ring $\mathbb{Z}[x_1, \ldots, x_n]$ by 
\[\partial_i f = \frac{f(\cdots, x_i, x_{i+1}, \cdots) - f(\cdots, x_{i+1}, x_i, \cdots)}{x_i - x_{i+1}}.\]  
The kernel of each divided difference operator contains the ideal $I$ generated by nonconstant homogeneous symmetric polynomials.  Thus divided difference operators act on the ring $\mathbb{Z}[x_1, \ldots, x_n]/I$, which is isomorphic to the cohomology ring of the full flag variety by a classical result of Borel \cite{Bo}.  The torus-equivariant cohomology ring of the full flag variety can be presented in a similar way.  If $e_1(x), e_2(x), \ldots, e_n(x)$ denote the elementary symmetric functions in the variables $x_1, \ldots, x_n$, then the equivariant cohomology of the flag variety is isomorphic to $\mathbb{Z}[x_1, \ldots, x_n; y_1, \ldots, y_n]/J$ for the ideal $J$ generated by differences $e_i(x) - e_i(y)$.  
Divided difference operators exist on the equivariant cohomology ring, both in the $x$ variables and in the $y$ variables; however only the operators in the $x$ variables descend to ordinary cohomology, via the projection setting each $y_i$ to zero.  

The problem with extending this theory to Grassmannians is that the cohomology of Grassmannians is a submodule of the cohomology of flag variety.  Divided difference operators in the $x$ variables do not preserve this submodule.  Divided difference operators in the $y$ variables do, but they disappear in ordinary cohomology.   

We solve this problem using a different presentation of the equivariant cohomology ring due to Goresky-Kottwitz-MacPherson \cite{GKM} and often called GKM theory.  The GKM presentation describes equivariant classes in terms of their localizations, so an equivariant class is a tuple of polynomials satisfying certain linear compatibility conditions.  For example, let $G(k,n)$ be the Grassmannian of $k$-planes in $\C^n$.  The torus $T$ of $n \times n$ diagonal invertible matrices  acts on $\C^n$ by multiplication; this induces an action on $G(k,n)$.  Let ${\mathcal B}_k$ be the set of $n$-bit binary strings with exactly $k$ ones and denote elements of ${\mathcal B}_k$ by ${\bf b} = b_1 b_2 \cdots b_n$.   Then 
\[H^*_T(G(k,n); \C) \cong \left\{p: {\mathcal B}_k \rightarrow \C[t_1, \ldots, t_n] : \begin{array}{c} \textup{for each } {\bf b}, {\bf b}' \in {\mathcal B}_k  \textup{ such that}\\ b_i = b'_i \textup{ unless } i \in \{j_1, j_2\} \\ \textup{ the difference } p({\bf b}) - p({{\bf b}'}) \\ \textup{ is in the ideal } \big( t_{j_1} - t_{j_2} \big)  \end{array} \right\}.\]
(The variables $t_i$ are weights of the torus $T$; the earlier variables $x_i$ and $y_i$ are Chern classes of certain line bundles on the flag variety, and correspond to torus weights if appropriately reindexed.)  Section \ref{GKM section} treats GKM theory in detail.  

Our divided difference operators, denoted $\delta_i$, arise from a natural action of the permutation group $S_n$ on the equivariant cohomology of the Grassmannian obtained by simultaneously permuting both the entries of ${\mathcal B}_k$ and the variables $t_1, \ldots, t_n$.  This action is detailed in Section \ref{W action section} and has been studied in other contexts \cite[page 131]{A}, \cite{B}, \cite{T}.  Our construction of the divided difference operators on equivariant cohomology is combinatorial and explicit, and induces operators on the ordinary cohomology of the Grassmannian as well.  Our divided difference operators $\delta_i$ satisfy three key properties:
\begin{enumerate}
\item {\em Explicit formula (Proposition \ref{well-defined, nil coxeter}).} The map $\delta_i$ acts on $p \in H^*_T(G(k,n); \C)$ by 
\begin{equation}\label{familiar formula} 
\delta_i p = \frac{p - s_i \cdot p}{t_i - t_{i+1}}\end{equation}
where $s_i \cdot p$ denotes the action of $s_i = (i,i+1)$ on the equivariant class $p$;
\item {\em Compatibility with Schubert classes (Lemma \ref{formula for action}).}  The ring $H^*_T(G(k,n); \C)$ has a natural basis of equivariant Schubert classes $p_{{\bf b}}$ indexed by the elements ${\bf b} \in {\mathcal B}_k$.  The divided difference operators are compatible with equivariant Schubert classes, in the sense that
\begin{equation}\label{respects schubert classes}
\delta_i p_{{\bf b}} = \left\{ \begin{array}{ll} p_{s_i{\bf b}} & \textup{ if } b_i < b_{i+1} \textup{ as integers, and} \\ 0 & \textup{ otherwise;} \end{array} \right.\end{equation}
\item {\em NilCoxeter relations (Proposition \ref{braid relations}).}  The divided difference operators satisfy the nilCoxeter relations: first, if $w$ is a permutation with a minimal-length factorization $w= s_{i_1} s_{i_2} \cdots s_{i_k}$ then the divided difference operator $\delta_w$ is well-defined by $\delta_w =   \delta_{i_1} \delta_{i_2} \cdots \delta_{i_k}$; second, we have $\delta_i^2 = 0$ for each $i$.
\end{enumerate}

Formula \eqref{familiar formula} is similar to the classical divided difference operator but defines a very different morphism because it uses a different presentation of the cohomology ring.  The divided difference operators $\delta_i$ were observed for the flag variety by A.~Knutson in unpublished work \cite{K} and in greater generality by M.\ Brion \cite{B}, but both misidentified $\delta_i$ as the Bernstein-Gelfand-Gelfand/Demazure divided difference operators.  M.~Brion gives a geometric description of the operators $\delta_i$ in \cite{B} but while proving Properties (1)-(3) quotes proofs from \cite{D} which deal with the operators $\partial_i$.  In fact, the traditional divided difference operators $\partial_i$ of Bernstein-Gelfand-Gelfand \cite{BGG} and Demazure \cite{D} were extended to the localized presentation of the equivariant cohomology ring by B.~Kostant and S.~Kumar \cite{KK}.  Arabia proved that the Kostant-Kumar and Bernstein-Gelfand-Gelfand operators agree when viewed as algebra homomorphisms \cite[Theorem 3.3.1]{A}.  A more detailed exposition is in \cite{T2}.  During preparation of this manuscript we learned that the divided difference operators $\delta_i$ were defined by D.~Peterson in unpublished lecture notes \cite{P}.  Unfortunately, what is available in the literature from Peterson does not contain proofs of many key results.  Comments in the text highlight results which Peterson also proves, using different methods.  

We now describe three applications of the divided difference operator $\delta_i$.  The first, Theorem \ref{gen all classes}, states that all Schubert classes in the (equivariant) cohomology of the partial flag variety can be generated by successive applications of divided difference operators to the top-degree class.  (In the case of $G(k,n)$, the top-degree class is $p_{{\bf b}}$ for the bit-string ${\bf b}$ with $n-k$ zeroes followed by $k$ ones.)  This generalizes work of \cite{BGG} and \cite{D} and gives a new description of the ring $H^*_T(G(k,n); \C)$.  

The second is an extension to partial flag varieties of Billey's formula for flag varieties.  Billey's formula gives the localizations at each torus-fixed point of the equivariant Schubert classes \cite{Bi2}; in the case of $G(k,n)$, the formula gives each polynomial $p_{\bf b}({\bf b}')$.  The result is stated in Theorem \ref{Billey generalization}.  The proof given here applies to all partial flag varieties $G/P$ and so extends the beautiful determinantal formula for type $A_n$ given in \cite{LRS} and the combinatorial formulas for maximal Grassmannians in several classical types of \cite{Kr}, \cite{IN}.  We give a direct proof, unlike the non-constructive geometric proof in \cite[Corollary 11.3.14]{Ku}.  This section also demonstrates explicitly an isomorphism from a subring of the equivariant cohomology of flag varieties to that of Grassmannians.

In Section \ref{schubert poly section}, the third application proposes Schubert polynomials for all partial flag varieties that satisfy key conditions highlighted by Fomin and Kirillov \cite{FK}.  In the case of $G(k,n)$ these are: 
\begin{enumerate}
\item \label{min length perm defined} the Schubert polynomial $\mathfrak{S}_{{\bf b}}$ is a homogenous polynomial of degree equal to the minimal length of permutations $w$ satisfying $w ({\bf 1^k 0^{n-k}}) = {\bf b}$; 
\item the polynomial $\mathfrak{S}_{{\bf b}}$ is computed from the Grassmannian Schubert class $p_{{\bf b}}$; 
\item the polynomial $\mathfrak{S}_{{\bf b}}$ is positive in the simple roots $t_i - t_{i+1}$;
\item and the structure constants satisfy $\mathfrak{S}_{{\bf b}} \mathfrak{S}_{{\bf b'}} = \sum c_{\bf b b'}^{\bf c} \mathfrak{S}_{{\bf c}} \mod I$, namely they are the structure constants defined by the Schubert classes in the ordinary cohomology of the Grassmannian up to the ideal $I$.
\end{enumerate}
(Note that $I$ is independent of $k$.)  More unusually, we give a closed formula for calculating Schubert polynomials for partial flag varieties using Billey's formula.  If $w$ is the permutation satisfying Condition \eqref{min length perm defined} for ${\bf b}$, the Grassmannian Schubert polynomial $\mathfrak{S}_{{\bf b}}$ is defined to be the average of the localizations of the equivariant Schubert class $p_{w^{-1}}$ in the full flag variety (Theorem \ref{schubert poly theorem}).  In the flag variety, these Schubert polynomials are the Bernstein-Gelfand-Gelfand Schubert polynomials.  Unlike the double Schubert polynomials for Grassmannian permutations defined by Lascoux \cite{L}, our (single) Schubert polynomials can be computed from a closed formula and demonstrate invariance under the permutation subgroup $\langle s_i: i \neq k \rangle$.  Like Lascoux's, no divided difference operators act on our Grassmannian Schubert polynomials, though the $\delta_i$ act on the corresponding localized Schubert classes.

The author gratefully acknowledges helpful conversations with Anders Buch, Charles Cadman, Sergey Fomin, William Fulton, Mark Shimozono, and John Stembridge, as well as invaluable assistance from Megumi Harada.  
\tableofcontents
\subsection{Notation}
More details about the general theory may be found in classical references, e.g. \cite{H} or \cite{Sp}.  More details on the combinatorics of Weyl groups and roots may be found in \cite[Chapter 2]{BB}.

Let $G$ be a complex semisimple linear algebraic group and $P$ be a parabolic subgroup in standard position, namely containing a fixed Borel $B$ and maximal torus $T \subseteq B$.  If $g \in G$ then denote the corresponding flag in $G/B$ by $[g]$.  The Weyl group corresponding to $G$ is the quotient $N(T)/T$, where $N(T)$ denotes the normalizer of the torus.  Denote the Weyl group by $W$.  Let $W_J$ be the subgroup of $W$ defined by $P$.  This means there is a subset $J$ of the integers $\{1,2,\ldots, \textup{rk}(G)\}$ so that $W_J$ is the subgroup of $W$ generated by the corresponding simple reflections $W_J = \langle s_i : i \in J \rangle$.
The quotient $W/W_J$ indexes the Schubert cells of $G/P$ and, more generally, plays the same role for $G/P$ as $W$ does for $G/B$.

If $G=GL_n(\C)$ then the subgroup $B$ may be chosen to be the upper-triangular invertible matrices.  In this case $T$ is the subgroup of diagonal matrices, $W$ is the permutation group $S_n$, and the simple reflection $s_i$ is the transposition $(i,i+1)$.

Let $w \in W$ and choose a minimal-length factorization of $w$ into simple reflections, say $w = s_{i_1} \cdots s_{i_k}$.  Then the {\em length} of $w$ is $k$, and is denoted $\ell(w)$.   We denote the coset corresponding to $w \in W$ by $[w] \in W/W_J$. Each $[w] \in W/W_J$ has a unique minimal-length representative of $[w]$ in $W$, because $P$ is parabolic.  Suppose that $w'$ is this minimal-length representative.  The length of $[w]$ in $W/W_J$ is defined to be $\ell(w')$ and is denoted $\ell_P([w])$. The minimal-length representatives of the cosets in $W/W_J$ are often denoted $W^J$.  The Bruhat order on $W$ is defined by $w \geq v$ if and only if there is a reduced word $w = s_{i_1} \cdots s_{i_k}$ such that $v$ can be written as a substring $v = s_{i_{j_1}} \cdots s_{i_{j_l}}$.  The Bruhat order on $W/W_J$ is the ordinary Bruhat order on the minimal representatives, so that if $[w], [v] \in W/W_J$ are cosets with minimal-length representatives $w',v' \in W$ then $[w] \geq_P [v]$ if and only if $w' \geq v'$.

Denote the Lie algebras of $G$, $B$, $P$, and $T$ by $\mathfrak{g}$, $\mathfrak{b}$, $\mathfrak{p}$, and $\mathfrak{t}$ respectively.  The root system associated to $G$ is $\Delta$.  The roots are the eigenfunctions of the adjoint action of $T$ on $\mathfrak{g}$.  (If $t \in T$ and $X \in \mathfrak{g}$, then the adjoint action is given by $\textup{Ad}_t(X) = tXt^{-1}$.)  Given a root $\alpha$, the root space $\mathfrak{g}_{\alpha} \subseteq \mathfrak{g}$ is the eigenspace of $\alpha$.  The positive roots $\Delta^+$ are the roots with $\mathfrak{b} = \oplus_{\alpha \in \Delta^+} \mathfrak{g}_{\alpha}$.  The simple roots in $\Delta^+$ are denoted $R = \{\alpha_1, \alpha_2, \ldots, \alpha_n\}$, where $n$ is the rank of $G$.  The roots are partially ordered by the condition $\alpha > \beta$ if and only if $\alpha - \beta$ is a sum of positive roots.

In the case of $GL_n(\C)$, the Lie algebra $\mathfrak{g}$ consists of all $n \times n$ matrices.  The roots are the formal differences $t_i - t_j$ for $i,j \in \{1,\ldots,n\}$.  The positive roots are $t_i - t_j$ for $1 \leq i<j \leq n$ and the simple roots are $t_i-t_{i+1}$ for $i = 1, 2, \ldots, n-1$.

The subalgebra $\mathfrak{p}$ decomposes into a maximal semisimple part $\mathfrak{l}$ and a nilpotent part $\mathfrak{n}$ so that $\mathfrak{p} = \mathfrak{l} \oplus \mathfrak{n}$.  If $\mathfrak{g}_{\alpha}$ is a root subspace then either $\mathfrak{g}_{\alpha} \subseteq \mathfrak{p}$ or not.  Let $\mathfrak{n}^-$ be the span $\mathfrak{n}^- = \oplus_{\mathfrak{g}_{\alpha} \not \subseteq \mathfrak{p}} \mathfrak{g}_{\alpha}$.  By construction $\mathfrak{g} = \mathfrak{p} \oplus \mathfrak{n}^-$.

If $V$ is a $T$-stable vector subspace of $\mathfrak{g}$, denote the roots whose root subspaces are contained in $V$ by $\Delta_V$.  We will be concerned with $\Rn$, which is called $\Delta_{G,P}$ in \cite{GHZ}.  Observe that $\Rn = - \Delta_{\mathfrak{n}}$ since $\Delta_{\mathfrak{l}} = - \Delta_{\mathfrak{l}}$ and $\Delta = \Delta_{\mathfrak{p}} \sqcup \Rn$.

The following lemma is included for the sake of completeness.

\begin{lemma} \label{roots in n-}
\[\Rn = -\{\alpha: \alpha > \alpha_i \textup{ for at least one } i \not \in J\}.\]
\end{lemma}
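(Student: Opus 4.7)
My plan is to unpack each object in the statement in terms of simple-root expansions: this reduces the lemma to describing $\Delta_{\mathfrak{p}}$ explicitly and then translating between two equivalent combinatorial conditions on a positive root. The backbone is the disjoint union $\Delta = \Delta_{\mathfrak{p}} \sqcup \Rn$ noted in the preamble.

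First I would identify $\Delta_{\mathfrak{p}}$. Because $B \subseteq P$, we have $\mathfrak{b} \subseteq \mathfrak{p}$ and hence $\Delta^+ \subseteq \Delta_{\mathfrak{p}}$. The Levi $\mathfrak{l}$ is generated as a Lie algebra by $\mathfrak{t}$ together with the root spaces $\mathfrak{g}_{\pm \alpha_i}$ for $i \in J$, so its root system $\Delta_{\mathfrak{l}}$ consists of the roots expressible as $\mathbb{Z}$-linear combinations of $\{\alpha_i : i \in J\}$; this is the standard structure theorem for the Levi of a standard parabolic subalgebra. Combined with $\Delta_{\mathfrak{n}} \subseteq \Delta^+$, this yields $\Delta_{\mathfrak{p}} = \Delta^+ \cup (\Delta_{\mathfrak{l}} \cap \Delta^-)$, and therefore $\Rn = \Delta^- \setminus \Delta_{\mathfrak{l}}$. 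In simple-root terms, a negative root $\alpha$ lies in $\Rn$ exactly when the expansion $-\alpha = \sum_j c_j \alpha_j$ (with $c_j \geq 0$) has some $c_i > 0$ for $i \not\in J$.

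Next I would match this description to the right-hand side of the lemma. Setting $\beta = -\alpha$, the task reduces to the equivalence: $\beta \in \Delta^+$ has some $c_i > 0$ with $i \not\in J$ if and only if $\beta > \alpha_i$ for some $i \not\in J$. The forward direction is immediate: if $c_i \geq 1$ then $\beta - \alpha_i = (c_i - 1)\alpha_i + \sum_{j \neq i} c_j \alpha_j$ is a non-negative integer combination of simple roots, hence a (possibly empty) sum of positive roots, so $\beta > \alpha_i$ in the partial order from the Notation subsection. Conversely, any sum of positive roots has non-negative simple-root coefficients, so if $\beta - \alpha_i$ is such a sum then $c_i \geq 1$. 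Negating both sides then converts the description of $\Rn$ from the previous paragraph into the form stated in the lemma.

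The trickiest point, in my view, is conceptual rather than technical: keeping straight that the partial order $>$ on roots is defined via sums of positive roots rather than via coefficient inequalities on simple roots, and correctly negating when passing from roots of $\mathfrak{p}$ to roots of $\mathfrak{n}^-$. Beyond that, the argument is essentially bookkeeping and uses only the standard description of $\Delta_{\mathfrak{l}}$.
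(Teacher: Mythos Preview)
Your proof is correct and follows essentially the same approach as the paper: identify $\Delta_{\mathfrak{l}}$ as the roots supported on $\{\alpha_i : i \in J\}$, deduce $\Delta_{\mathfrak{p}} = \Delta^+ \cup \Delta_{\mathfrak{l}}$, and take the complement. You are in fact more explicit than the paper in the last step, where you spell out the equivalence between ``$\beta$ has a positive $\alpha_i$-coefficient for some $i \notin J$'' and ``$\beta > \alpha_i$ for some $i \notin J$''; the paper simply asserts the conclusion after computing $\Delta_{\mathfrak{p}}$.
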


\begin{proof}
The semisimple part $\mathfrak{l}$ is the Lie algebra generated by the simple root spaces $\{\mathfrak{g}_{\pm \alpha_i}: i \in J\}$ together with $\mathfrak{t}$.  The root system for $\mathfrak{l}$ is consequently 
\[ \Delta_{\mathfrak{l}} = \left\{\alpha \in \Delta: \alpha = \sum_{i \in J} c_i \alpha_i \textup{ for some } c_i \in \mathbb{Z}\right\}.\]
Since $\mathfrak{p} \supseteq \mathfrak{b}$ and $\mathfrak{p} \supseteq \mathfrak{l}$, we know that
\[\Delta_{\mathfrak{p}} = \Delta_{\mathfrak{l}} \cup \Delta^+.\]
Since $\Rn = \Delta - \Delta_{\mathfrak{p}}$, we conclude 
\[\Rn =  -\{\alpha: \alpha > \alpha_i \textup{ for at least one } i \not \in J\}.\]
\end{proof}

\section{GKM theory for $G/P$} \label{GKM section}
This section is a self-contained introduction to GKM theory, which is a combinatorial
construction of equivariant cohomology.  (Here, as in all parts of this paper, we use cohomology with complex coefficients.)

Let $X$ be a compact projective algebraic variety with an algebraic action of an $n$-dimensional torus $T$.   Suppose that $T$ acts on $X$ with finitely-many fixed points and finitely-many one-dimensional orbits.  In this case, if $O$ is a one-dimensional $T$-orbit in $X$, then the closure $\overline{O}$ is homeomorphic to $\C P^1$ and consists of the union of $O$ with two distinct $T$-fixed points in $X$ \cite[Section (7.1)]{GKM}.

If in addition $X$ satisfies a condition called {\em equivariant formality}, then $X$ is called a GKM space.  Equivariant formality refers to the degeneration of a certain spectral sequence; see \cite[Section (1.2)]{GKM} for a precise definition.  Every smooth complex algebraic variety is equivariantly formal, as is any variety with no odd-dimensional ordinary cohomology \cite[Theorem (14.1)]{GKM}.  In particular, each partial flag variety $G/P$ is equivariantly formal.  

Under these circumstances, the equivariant cohomology of $X$ is given by a combinatorial algorithm.  Denote the symmetric algebra in $\mathfrak{t}^*$ by $S(\mathfrak{t}^*)$.
\begin{proposition} (Goresky, Kottwitz, and MacPherson)
Let $v_1, \ldots, v_r$ be the $T$-fixed points of $X$.  If $O_{ij}$ is a one-dimensional $T$-orbit whose closure contains the two fixed points $v_i$ and $v_j$, write $\alpha_{ij}$ for the weight of the torus action on $O_{ij}$.  Then
\[H^*_T(X) \cong \left\{\begin{array}{ll} p: \{v_1, \ldots, v_r\} \rightarrow S(\mathfrak{t}^*): & \textup{ for each one-dimensional orbit } O_{ij}, \\ & \textup{ the difference } p(v_i) - p(v_j) \in \ideal{\alpha_{ij}} \end{array} \right\}.\] 
\end{proposition}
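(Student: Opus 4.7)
The plan is to deduce the GKM description from two classical ingredients: the localization theorem for equivariantly formal $T$-spaces, and an analysis of the one-skeleton $X_1 \subseteq X$ (the union of $X^T$ with the finitely many one-dimensional orbits). The equivariant formality hypothesis ensures that $H^*_T(X)$ is a free module over $S(\mathfrak{t}^*) = H^*_T(\textup{pt})$; combined with the fact that $X^T$ is finite (so $H^*_T(X^T) \cong \bigoplus_{i=1}^r S(\mathfrak{t}^*)$ has no $S(\mathfrak{t}^*)$-torsion), the Atiyah-Bott-Borel localization theorem upgrades from the rational statement to an honest injection
\[
\iota^*: H^*_T(X) \hookrightarrow H^*_T(X^T) \cong \bigoplus_{i=1}^r S(\mathfrak{t}^*),
\]
so each equivariant class is faithfully encoded as a tuple $p = \bigl(p(v_1), \ldots, p(v_r)\bigr)$ of polynomials. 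This immediately gives the containment of $H^*_T(X)$ in the right-hand side of the claimed isomorphism, once we show that every image tuple satisfies the divisibility conditions, which I discuss next.

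For the divisibility conditions, I would restrict attention to each one-dimensional orbit closure $\overline{O}_{ij} \cong \C P^1$ separately and use functoriality. The torus acts on $\overline{O}_{ij}$ with weight $\alpha_{ij}$ and two fixed points $v_i, v_j$; a direct computation for $T$ acting on $\C P^1$ (for example via the fibration $ET \times_T \C P^1 \to BT$, or by identifying $H^*_T(\C P^1)$ with $S(\mathfrak{t}^*)[\zeta]/\bigl(\zeta(\zeta - \alpha_{ij})\bigr)$) shows that the restriction $H^*_T(\C P^1) \to H^*_T(\{v_i,v_j\})$ has image exactly $\{(f,g) : f - g \in \bigl(\alpha_{ij}\bigr)\}$. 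Pulling back along $\overline{O}_{ij} \hookrightarrow X$, every class $p \in H^*_T(X)$ satisfies $p(v_i) - p(v_j) \in \bigl(\alpha_{ij}\bigr)$.

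For the reverse containment, that every compatible tuple actually comes from a class on $X$, the crucial tool is the Chang-Skjelbred lemma: under the present hypotheses, the image of $\iota^*$ coincides with the image of $H^*_T(X_1) \to H^*_T(X^T)$. Since $X_1$ is a union of $\C P^1$'s meeting only at $T$-fixed points, $H^*_T(X_1)$ can be assembled from the pieces $H^*_T(\overline{O}_{ij})$ by gluing along fixed points (a Mayer-Vietoris or direct-limit argument), and the image in $\bigoplus_i S(\mathfrak{t}^*)$ is exactly the set of tuples satisfying the $\bigl(\alpha_{ij}\bigr)$-divisibility on each edge. Combining this with the injectivity gives the stated isomorphism.

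The main obstacle is the Chang-Skjelbred step: establishing that $H^*_T(X) \to H^*_T(X_1)$ is an isomorphism requires a careful filtration of $X$ by $T$-invariant subvarieties and a vanishing argument on relative cohomology (one filters by orbit dimension and shows that all higher relative terms contribute only torsion $S(\mathfrak{t}^*)$-modules, which vanish in the equivariantly formal, torsion-free setting). Once that reduction is in hand, the rest is the local $\C P^1$ computation and the bookkeeping of gluing fixed points, which are routine. In the body of the paper, I would likely just cite the localization and Chang-Skjelbred theorems from \cite{GKM} and execute the $\C P^1$ calculation explicitly, since that is the only step where the weights $\alpha_{ij}$ enter the statement.
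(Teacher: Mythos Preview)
The paper does not prove this proposition at all: it is stated with attribution to Goresky, Kottwitz, and MacPherson and implicitly cited from \cite{GKM}, with no proof environment following the statement. Your proposal, by contrast, sketches the standard argument that actually appears in \cite{GKM}: equivariant formality plus finiteness of $X^T$ gives injectivity of restriction to fixed points; the local $\C P^1$ computation yields the necessity of the divisibility conditions; and the Chang--Skjelbred lemma (reduction to the one-skeleton) gives sufficiency. That outline is correct and is essentially the route taken in the original source, so there is no mathematical gap --- but it is not something the present paper undertakes. If you are writing in the spirit of this paper, the appropriate move is simply to cite \cite{GKM} for the result rather than reproduce its proof.
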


It is frequently useful to think of $p$ as a tuple of polynomials, namely an element of $S(\mathfrak{t}^*)^r$.  Then the ring structure of $H^*_T(X)$ is defined by coordinate-wise addition and multiplication.  When $X=G/B$, this formulation was discovered by \cite{KK} and \cite{A}.  The condition that the difference $p(v_i) - p(v_j) \in \ideal{\alpha_{ij}}$ is often called {\em the GKM condition}.  We will choose a basis for $\mathfrak{t}^*$ and identify $\C[t_1,\ldots,t_n]$ with $S(\mathfrak{t}^*)$ through the natural isomorphism.

The GKM conditions can be described combinatorially using the {\em moment graph} of the variety $X$.  The vertices of the moment graph are the $T$-fixed points of $X$.  There is an edge between two vertices in the graph if and only if there is a one-dimensional $T$-orbit whose closure contains the corresponding two fixed points.  Each edge is labeled by the $T$-weight on its corresponding one-dimensional orbit.  We assume that the graph has been directed by the choice of a suitably generic one-dimensional torus.  (This graph is the $1$-skeleton of $X$ under the image of the moment map.)

With these conventions, the previous theorem can be rewritten.  An equivariant class can be described as a copy of the moment graph whose vertices are labeled by elements of $S(\mathfrak{t}^*)$ such that the labels on each pair of vertices connected by an edge satisfy the GKM condition, namely $p(v_i) - p(v_j) \in \ideal{\alpha_{ij}}$.  Figure \ref{p2 classes} gives examples of equivariant classes for $\C \mathbb{P}^2$.
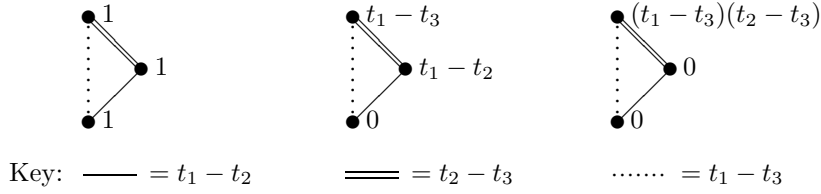
\begin{figure}[h]
\begin{picture}(350,63)(0,-40)
\put(50,-20){\circle*{5}}
\put(50,20){\circle*{5}}
\put(70,0){\circle*{5}}
\multiput(50,-20)(0,4){10}{\circle*{1}}
\put(50,-20){\line(1,1){20}}
\put(70,-1){\line(-1,1){20}}
\put(70,1){\line(-1,1){20}}

\put(55,-22){$1$}
\put(75,-2){$1$}
\put(55,18){$1$}

\put(150,-20){\circle*{5}}
\put(150,20){\circle*{5}}
\put(170,0){\circle*{5}}
\multiput(150,-20)(0,4){10}{\circle*{1}}
\put(150,-20){\line(1,1){20}}
\put(170,-1){\line(-1,1){20}}
\put(170,1){\line(-1,1){20}}

\put(155,-22){$0$}
\put(175,-2){$t_1-t_2$}
\put(155,18){$t_1-t_3$}

\put(250,-20){\circle*{5}}
\put(250,20){\circle*{5}}
\put(270,0){\circle*{5}}
\multiput(250,-20)(0,4){10}{\circle*{1}}
\put(250,-20){\line(1,1){20}}
\put(270,-1){\line(-1,1){20}}
\put(270,1){\line(-1,1){20}}

\put(255,-22){$0$}
\put(275,-2){$0$}
\put(255,18){$(t_1-t_3)(t_2-t_3)$}

\put(20,-42){Key:}
\put(48,-40){\line(1,0){20}}
\put(73,-42){$= \small{t_1 - t_2}$}
\put(147,-39){\line(1,0){20}}
\put(147,-41){\line(1,0){20}}
\put(172,-42){$= \small{t_2 - t_3}$}
\multiput(249,-40)(3,0){7}{\circle*{1}}
\put(274,-42){$= \small{t_1 - t_3}$}
\end{picture}
\caption{Examples of GKM classes for $\mathbb{CP}^2$} \label{p2 classes}
\end{figure}

The ordinary cohomology ring is the quotient of the equivariant cohomology ring by a maximal ideal in $S(\mathfrak{t}^*)$:
\[H^*(X) = \frac{H^*_T(X)}{\ideal{\alpha_1, \ldots, \alpha_n} S(\mathfrak{t}^*)}.\]

\subsection{The flow-up basis for $H^*_T(G/P)$} \label{flow-up basis subsection}

The vertices of the moment graph are partially ordered by the rule that $v \leq w$ if and only a sequence of directed edges leads from $w$ to $v$ in the graph.  We write $w \stackrel{\alpha}{\mapsto} v$ if there is an edge directed from $w$ to $v$ and labeled $\alpha$.

\begin{definition} A {\em flow-up class} $p_w \in H^*_T(X)$ corresponding to the
$T$-fixed point $w$ is a class that satisfies three conditions:
\begin{enumerate}
\item the class $p_w$ is supported exactly on the $T$-fixed points $v$ such that $v \geq w$;
\item the class $p_w$ is homogeneous; and
\item the localization $\displaystyle p_w(w) = \prod_{\footnotesize 
\begin{array}{c} \alpha \textup{ such that } \\w \stackrel{\alpha}{\mapsto} v \end{array}} \alpha$.
\end{enumerate}
\end{definition}

Each of the classes in Figure \ref{p2 classes} is in fact a flow-up class.  When flow-up classes exist for each $T$-fixed point in $X$, the set $\{p_w: w \in X^T\}$ forms a basis for $H^*_T(X)$, see \cite[Theorem 2.4.4]{GZ} or \cite[Proposition 2.13]{T}.  This is the case for a broad class of manifolds called Palais-Smale manifolds, which includes $G/P$.  In $H^*_T(G/P)$ the flow-up classes are equivariant Schubert classes, corresponding to the dual to the closure of $[Bw]$ in the sense of Graham \cite[Proposition 2.1]{G}.  (Note that this is not the Poincar\'{e} dual.)
The class $p_w$ was called $\xi^w$ in \cite{KK}.

Calculations in $H^*_T(X)$ are much easier if $H^*_T(X)$ has a basis of flow-up classes.  In a later section, we compute an explicit formula for the localizations $p_{[w]}([v])$ of flow-up classes in $H^*_T(G/P)$.

\section{The moment graph of $G/P$} \label{moment graph section}

In this section, we characterize the moment graph of $G/P$ with respect to the $T$-action in several ways.  The results here generalize Carrell's work \cite{C} on generalized flag varieties $G/B$ to all partial flag varieties $G/P$.  The moment graph is described in the following theorem, which is a variation of work of Guillemin-Holm-Zara \cite[Theorem 2.4]{GHZ}.
\begin{theorem} \label{moment graph description}
The moment graph of $G/P$ consists of the following data:
\begin{enumerate}
\item The vertices are the cosets $[w] \in W/W_J$.
\item There is an edge between two fixed points $[w] \neq [v]$ if and only if $[v] = [s_{\alpha}w]$ for some reflection $s_{\alpha} \in W$ with $\alpha > 0$ and $w^{-1}(\alpha) \in \pm \Rn$.  In this case the edge is labeled $\alpha$.
\item Suppose there is an edge between two vertices $[w] \neq [s_{\alpha}w]$ for $\alpha > 0$.   If $w^{-1}(\alpha)<0$ then the edge is directed $[w] \mapsto [s_{\alpha}w]$.  Otherwise $[s_{\alpha}w] \mapsto [w]$.
\end{enumerate}
\end{theorem}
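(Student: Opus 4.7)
The plan is to push the known moment graph of $G/B$ forward along the $T$-equivariant projection $\pi \colon G/B \to G/P$. Recall from Carrell \cite{C} that the moment graph of $G/B$ has vertex set $W$, an edge between $w$ and $s_\alpha w$ labeled $\alpha$ for each positive root $\alpha$ and each $w \in W$, with the length-decreasing convention $w \mapsto s_\alpha w$ precisely when $w^{-1}(\alpha) < 0$.

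First I would identify the vertices of the $G/P$ moment graph. The Bruhat decomposition $G/P = \bigsqcup_{[w] \in W/W_J} BwP/P$ writes each cell as a $B$-orbit with unique $T$-fixed point $wP/P$, producing the bijection between $(G/P)^T$ and $W/W_J$ asserted in item (1).

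Next I would handle the edges by tracking images of $1$-dimensional $T$-orbits. Because $\pi$ is $T$-equivariant, the closure $\overline{U_\alpha \cdot wB/B} \subset G/B$ either collapses to a point (if its two endpoints coincide in $W/W_J$) or projects to a $1$-dimensional $T$-orbit in $G/P$ carrying the same weight $\alpha$. The collapse occurs exactly when $s_{w^{-1}(\alpha)} \in W_J$, equivalently $w^{-1}(\alpha) \in \Delta_{\mathfrak{l}}$; combining Lemma \ref{roots in n-} with the decomposition $\Delta = \Delta_{\mathfrak{l}} \sqcup \pm\Rn$ rephrases non-collapse as the condition $w^{-1}(\alpha) \in \pm\Rn$ of item (2). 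To confirm that every $1$-dimensional $T$-orbit of $G/P$ is produced this way, I would compute the $T$-weights on the tangent space: left-translation by $w$ is $T$-equivariant up to conjugation, so $T_{[w]P/P}(G/P) \cong \mathfrak{g}/\mathfrak{p} \cong \mathfrak{n}^-$ as vector spaces with $T$-weights $w(\Rn)$. Each weight $\gamma = w(\beta)$ yields a $1$-dimensional orbit $U_\gamma \cdot [w]P/P$ with second fixed point $[s_{|\gamma|}w]$ and label $\alpha = |\gamma|$, satisfying $w^{-1}(\alpha) = \pm\beta \in \pm\Rn$.

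Finally, for item (3), I would inherit the length-decreasing direction from $G/B$ and verify that it descends to $W/W_J$. This reduces to showing $W_J$ preserves each of $\Rn$ and $-\Rn$ separately (not merely their union), so that $w^{-1}(\alpha)$ and $(ww_J)^{-1}(\alpha) = w_J^{-1}w^{-1}(\alpha)$ always share the same sign. For a generator $s_j$ with $j \in J$ and a positive root $\beta = \sum_i c_i\alpha_i \in -\Rn$, there is some $k \notin J$ with $c_k > 0$; since $s_j$ alters only the coefficient of $\alpha_j$, the $\alpha_k$-coefficient of $s_j(\beta)$ is still $c_k > 0$, so $s_j(\beta)$ remains a positive root outside the Levi, hence in $-\Rn$. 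The principal obstacle is exactly this sign-preservation step, since it is what makes both the edge condition and the edge direction well-defined on cosets; the remainder is a direct translation of Carrell's result via $\pi$.
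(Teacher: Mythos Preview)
Your approach is correct and reaches the same conclusion, but it is organized differently from the paper's argument. The paper does not push the $G/B$ moment graph forward wholesale. Instead, it first establishes a $T$-equivariant bijection between each Schubert cell $[Bw] \subset G/P$ and an explicit unipotent subgroup $U_w^P$ with $\textup{Lie}(U_w^P) = \mathfrak{b} \cap w\mathfrak{n}^- w^{-1}$ (Lemma~\ref{special coset rep}), using the Chevalley commutator relation to verify closure under multiplication. From that parametrization the fixed points and one-dimensional orbits are read off intrinsically; Carrell's result is only invoked at the very end, to identify the second fixed point in the closure of $[U_\alpha w]$. Your route instead projects Carrell's curves from $G/B$ and then uses the tangent-space weight count at $[w]$ to certify that nothing is missed, which is a clean surjectivity argument the paper does not make explicitly.

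What each buys: your argument is more economical for Theorem~\ref{moment graph description} alone and avoids the commutator computation. The paper's construction of $U_w^P$, however, is reused heavily downstream---it drives the identification of $P$-inversions $\Inv{w}^P$ with edge labels, the explicit construction of minimal-length coset representatives in Section~\ref{min rep}, and the equality of $\dim [Bw]$, $\ell_P([w])$, and the number of out-edges. Your well-definedness step (that $W_J$ preserves $\Rn$ and $-\Rn$ separately) is exactly the content of the paper's Lemma~\ref{U_w^P agrees on cosets}, proved there by the same coefficient-of-$\alpha_k$ argument, so on that point the two approaches coincide.
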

Section \ref{cells and roots} describes the Schubert cells of $G/P$ in terms of certain subsets of roots and then uses this data to describe the $T$-fixed points and one-dimensional $T$-orbits in $G/P$.  In the algebraic context, this simplifies the proof of \cite[Theorem 2.4]{GHZ}.  We include our proof here because it can be used to construct explicitly the unique minimal-length representative for each coset in $W/W_J$.  We do this in Section \ref{min rep}.  If $w$ is a minimal-length representative for the coset $[w] \in W/W_J$, then we will see that the following are equal:
\begin{enumerate}
\item the length of $w$ in $W$;
\item the length of $[w]$ in $W/W_J$;
\item the dimension of the Schubert cell $[Bw]$ in $G/P$;
\item and the number of edges directed out of $[w]$ in the moment graph of $G/P$.
\end{enumerate}
Section \ref{sub and quotient} shows that the moment graph of $G/P$ can be realized combinatorially as the quotient of the moment graph of $G/B$ by the action of $W_J$.  Furthermore, the subgraph of covering relations of the moment graph of $G/P$ (namely, the edges between elements of $W/W_J$ that differ by length one) is the subgraph induced from the moment graph of $G/B$ by the minimal-length representatives of $W/W_J$.  This expands on the results of \cite[Proposition 1.1]{S} that show the quotient $W/W_J$ carries a poset structure induced by the Bruhat order on the minimal-length representatives.  The final part of this section gives several examples.

\subsection{Cells and roots} \label{cells and roots}

The next lemma is the essential step in this section.  It establishes a $T$-equivariant bijection between a certain subgroup of the unipotent group and the Schubert cell $[Bw]$ in $G/P$.  We will use this to explicitly identify the moment graph of $G/P$ as well as the minimal-length representatives for each $[w] \in W/W_J$.

In what follows, we will frequently treat $w \in W$ as if it were an element in $G$.  There are two facts we will use repeatedly:
\begin{enumerate}
\item The element $[wP] \in G/P$ is well-defined for all $w \in W$ because $T \subseteq P$.  Indeed, for all $g \in G$ the element $[gwP]$ is well-defined.
\item If $\mathfrak{g}_{\alpha}$ is a root subspace then $w \mathfrak{g}_{\alpha} w^{-1}$ is well-defined, and equals $\mathfrak{g}_{w(\alpha)}$.  In fact let $U_{\alpha}$ be the subgroup of $G$ with $\textup{Lie}(U_{\alpha}) = \mathfrak{g}_{\alpha}$.  Then $wU_{\alpha}w^{-1}$ is well defined and equals $U_{w(\alpha)}$.  In general if $u\in U_{\alpha}$ then $wuw^{-1}$ is defined up to conjugation by $T$.  (See Humphreys for more \cite[Theorem in Section 26.3]{H}.)  
\end{enumerate}

\begin{lemma} \label{special coset rep}
For each $[w] \in W/W_J$, let $U_w^P$ be the subgroup of the unipotent group in $B$ defined by the condition
\[\textup{Lie}(U_w^P) = \mathfrak{b} \cap \left( w\mathfrak{n}^-w^{-1}\right).\]
The map $U_w^P \rightarrow [Bw]$ given by $u \mapsto [uw]$ is a $T$-equivariant continuous bijection, where $T$ acts on $U_w^P$ by conjugation and on $G/P$ by (left) multiplication.
\end{lemma}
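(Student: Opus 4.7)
The plan is to extract the structure of $U_w^P$ from a $T$-stable Lie-algebra decomposition of $\n$, use this to produce a global factorization of the unipotent radical $U$ of $B$, and then verify each of the four claims ($T$-equivariance, continuity, surjectivity, injectivity) in turn.

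First I would establish the splitting. Because $\g = \mathfrak{p} \oplus \mathfrak{n}^-$ is a direct sum of $T$-stable subspaces, conjugating by $w$ yields $\g = w\mathfrak{p}w^{-1} \oplus w\mathfrak{n}^- w^{-1}$, and since $\h = w\h w^{-1} \subseteq w\mathfrak{p}w^{-1}$, intersecting with $\n$ gives
\[\n = \bigl(\n \cap w\mathfrak{p}w^{-1}\bigr) \oplus \bigl(\n \cap w\mathfrak{n}^- w^{-1}\bigr),\]
in which the second summand equals $\bo \cap w\mathfrak{n}^- w^{-1} = \mathrm{Lie}(U_w^P)$. Both summands are $T$-stable sums of positive root spaces, and each root subset is closed under addition because $\mathfrak{p}$ and $\mathfrak{n}^-$ are subalgebras of $\g$. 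Fixing a convex ordering on $\Delta^+$ in which the roots of $U_w^P$ come first, the standard description of $U$ as an ordered product of one-parameter root subgroups promotes this Lie-algebra splitting to a variety isomorphism $U_w^P \times U' \stackrel{\sim}{\longrightarrow} U$, where $U'$ is the connected unipotent subgroup with Lie algebra $\n \cap w\mathfrak{p}w^{-1}$.

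With this in hand, $T$-equivariance reduces to the identity $t \cdot [uw] = [tuw] = [(tut^{-1})(tw)] = [(tut^{-1})\,w]$, using $tw \cdot P = w(w^{-1}tw)P = wP$ because $w^{-1}tw \in T \subseteq P$. Continuity is automatic since the map is a composition of algebraic morphisms. For surjectivity, an arbitrary class in $[Bw]$ has the form $[bw] = [(tut^{-1})w]$ for $b = tu \in TU$, so it suffices to produce a preimage for each $[u'w]$ with $u' \in U$. Using the product decomposition, factor $u' = u_1 u_2$ with $u_1 \in U_w^P$ and $u_2 \in U'$; then $u_2 \in wPw^{-1}$ forces $[u_1 u_2 w] = [u_1 w \cdot (w^{-1}u_2 w)] = [u_1 w]$.

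For injectivity, if $[u_1 w] = [u_2 w]$ with $u_1, u_2 \in U_w^P$ then $u_2^{-1}u_1 \in U_w^P \cap wPw^{-1}$, whose Lie algebra lies inside $\bo \cap w(\mathfrak{n}^- \cap \mathfrak{p})w^{-1} = 0$. A closed subgroup of the connected unipotent complex algebraic group $U_w^P$ with trivial Lie algebra is itself trivial, so $u_1 = u_2$. The main subtlety is the upgrade from the Lie-algebra splitting of $\n$ to the global product $U = U_w^P \cdot U'$, which requires that both root subsets be closed under root addition; this is precisely where the subalgebra structure of $\mathfrak{p}$ and $\mathfrak{n}^-$ in $\g$ is used.
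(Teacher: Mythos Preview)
Your proof is correct and follows essentially the same route as the paper: factor $U = U_w^P \cdot U'$ using the root-subgroup decomposition (the paper invokes the Chevalley commutator relation where you invoke closedness of the two root subsets, which is the same content), then read off surjectivity from $w^{-1}U'w \subseteq P$ and injectivity from $U_w^P \cap wPw^{-1} = \{e\}$. The only cosmetic difference is that the paper dispatches injectivity in one line, while you pass through the Lie-algebra intersection and the characteristic-zero fact that a unipotent group has no nontrivial finite subgroups; both arguments are ultimately the observation that $\mathfrak{n}^- \cap \mathfrak{p} = 0$.
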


\begin{proof}  First we show that $U_w^P$ is a subgroup.  The unipotent subgroup $U$ can be factored into root subgroups $U = \prod_{\alpha \in \Delta^+} U_{\alpha}$ and the factorization is unique up to (arbitrary) fixed ordering of the roots \cite[Proposition 28.1]{H}.  The definition of $U_w^P$ implies that $U_w^P$ is a product of root subgroups $U_{\alpha}$, so we may write 
\begin{equation} \label{subgroup reps} U_w^P = \prod_{\footnotesize \begin{array}{c}
\alpha \in \Delta^+ s.t. \\ w^{-1}(\alpha) \in \Rn \end{array}} U_{\alpha}.\end{equation}
(Again, the ordering is fixed but arbitrary.)  The Chevalley commutator relation \cite[page 207]{Sp} states that if $u \in U_{\alpha}$ and $v \in U_{\beta}$ then
\[uvu^{-1}v^{-1} \in \prod_{\footnotesize \begin{array}{c} i,j \geq 1 \textup{ s.t.} \\ i\alpha + j \beta \in \Delta \end{array}} U_{i\alpha + j \beta}.\]  
Recall that $\mathfrak{n}^-$ is the direct sum of root spaces $\mathfrak{g}_{\alpha}$
such that $\alpha < -\alpha_i$ for at least one $i \not \in J$.  Hence if $w^{-1}(\alpha), w^{-1}(\beta) \in \Rn$ and $\alpha+\beta$ is a root then $w^{-1}(\alpha+\beta) \in \Rn$. The Chevalley commutator relation thus implies that $U_w^P$ is a subgroup.

Let $U'$ be the product of root subgroups $U_{\alpha}$ such that $w^{-1}(\alpha) \in \Delta_\mathfrak{p}$.  Order the roots $\alpha \in \Delta$ so that the unipotent subgroup factors $U = U_w^PU'$.  We will use this factorization to show that the map $U_w^P \rightarrow [Uw]$ is bijective.  (The Schubert cell $[Bw] = [Uw]$ because $B = UT$ and $w \in W$.)

First we show surjectivity.  Let $[uw]$ be any element of $[Uw] \subseteq G/P$ and write $u = u_1u_2$ where $u_1 \in U_w^P$ and $u_2 \in U'$.  Then $w^{-1}u_2w \subseteq P$ by definition of $U'$.  Hence $[uw]=[u_1w]$ has a representative in $[U_w^Pw]$.  

Now we show injectivity.  The representative $u_1$ is unique because if $[u_1w]=[u'w]$ for $u_1,u' \in U_w^P$ then $w^{-1}(u_1^{-1}u')w \subseteq P$.   The only element of $U_w^P$ satisfying this condition is $u_1^{-1}u'=id$.  

Finally we analyze the map $U_w^P \rightarrow [U_w^Pw]$.  It is $T$-equivariant by definition of the $T$-actions, and because $W$ is a quotient of the normalizer of $T$.  Let $g_w$ be any element of $G$ in the coset $w \in W$.  Then both the multiplication map $U_w^P \hookrightarrow U_w^Pg_w$ and the quotient map $U_w^Pg_w \rightarrow [U_w^Pw]$ are continuous.  The map $u \mapsto [uw]$ is their composition, and hence is continuous.    
\end{proof}

The next corollary proves that the moment graph of $G/P$ has vertices, edges, and labels on edges described in Theorem \ref{moment graph description}.

\begin{corollary}
The $T$-fixed points in $G/P$ are the cosets $[w] \in W/W_J$.  The $T$-stable curves in $G/P$ are
\[\{[U_{\alpha}w] \cup [s_{\alpha}w]: \textup{ for each } [w] \in W/W_J \textup{ and root subgroup } U_{\alpha} \subseteq U_w^P\}.\]
The weight of the $T$-action on the curve $[U_{\alpha}w] \cup [s_{\alpha}w]$ is $\alpha$.
\end{corollary}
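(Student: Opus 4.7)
The plan is to use the Bruhat decomposition $G/P = \bigsqcup_{[w] \in W/W_J} [Bw]$ together with the $T$-equivariant bijection $\varphi_w : U_w^P \to [Bw]$ from Lemma \ref{special coset rep} ($T$ acting on $U_w^P$ by conjugation) to reduce the problem to understanding $T$-orbits on each $U_w^P$.

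For the $T$-fixed points, equation \eqref{subgroup reps} expresses $U_w^P$ as a product of root subgroups $U_\alpha$. Since $T$ acts on each such $U_\alpha$ by the nonzero weight $\alpha$ and distinct root subgroups carry distinct weights, the only $T$-fixed element of $U_w^P$ is the identity. By the $T$-equivariance of $\varphi_w$, the unique $T$-fixed point of $[Bw]$ is $[w]$, and taking the union over cosets yields the statement about fixed points.

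For the one-dimensional orbits, via \eqref{subgroup reps} an element of $U_w^P$ with nonzero coordinates in two distinct root subgroups already generates a $T$-orbit of dimension at least two, so the one-dimensional $T$-orbits in $U_w^P$ are exactly the sets $U_\alpha \setminus \{e\}$ for $U_\alpha \subseteq U_w^P$. Under $\varphi_w$ these correspond to the curves $[U_\alpha w] \setminus \{[w]\}$ in $[Bw]$, each carrying $T$-weight $\alpha$ since the conjugation action on $U_\alpha$ does.

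The main obstacle is identifying the closure of $[U_\alpha w]$ inside $G/P$: since $G/P$ is projective, this non-closed orbit must acquire at least one further $T$-fixed point. I would invoke the $SL_2$-homomorphism $\phi_\alpha : SL_2(\C) \to G$ associated to $\alpha$, which carries the upper-triangular unipotent subgroup onto $U_\alpha$ and a standard Weyl representative in $SL_2(\C)$ to a lift of $s_\alpha$ in $N(T)$. The image $\phi_\alpha(SL_2(\C)) \cdot [w]$ is a $T$-stable complete curve in $G/P$ containing $[U_\alpha w]$; since $[U_\alpha w]$ is already one-dimensional, this curve cannot be a point, and hence it is an irreducible projective curve whose only $T$-fixed points are $[w]$ and $s_\alpha \cdot [w] = [s_\alpha w]$. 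This identifies the closure as $[U_\alpha w] \cup \{[s_\alpha w]\}$, matching the statement; because every one-dimensional $T$-orbit in $G/P$ lies in some Bruhat cell, this enumeration is exhaustive.
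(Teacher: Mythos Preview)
Your argument is correct and, for the identification of fixed points and one-dimensional orbits, proceeds exactly as the paper does: both reduce via the $T$-equivariant bijection $U_w^P \to [Bw]$ of Lemma~\ref{special coset rep} to the conjugation action on $U_w^P$, and read off the fixed point and the one-dimensional orbits from the root-subgroup factorization~\eqref{subgroup reps}.

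The only divergence is in pinning down the second fixed point on the closure of $[U_\alpha w]$. The paper cites Carrell's result \cite{C} for $G/B$ (that the closure there is $[U_\alpha w] \cup [s_\alpha w]$) and pushes it forward along the continuous projection $G/B \to G/P$, then invokes the forward-referenced Lemma~\ref{U_w^P agrees on cosets} to verify $[s_\alpha w] \neq [w]$. Your $SL_2$-morphism argument handles this directly inside $G/P$ and is more self-contained, avoiding both the external citation and the forward reference. The distinctness $[s_\alpha w] \neq [w]$ is implicit in your reasoning---a complete curve carrying a nontrivial $T$-action (as forced by the one-dimensionality of $[U_\alpha w]$) must have at least two distinct fixed points---but it would strengthen the write-up to state this step explicitly.
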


\begin{proof}
Let $[w] \in W/W_J$.  The only $T$-fixed point under the adjoint action of $T$ on $U_w^P$ is the identity; it maps to $[w] \in G/P$ by the $T$-equivariant bijection of Lemma \ref{special coset rep}.  Similarly, the one-dimensional $T$-orbits of $U_w^P$ are exactly the non-identity elements of $U_{\alpha} \subseteq U_w^P$, so the one-dimensional orbits in the Schubert cell $[Uw]$ are exactly the sets $[U_{\alpha}w] - [w]$ for each root subgroup $U_{\alpha} \subseteq U_w^P$.  By inspection, the weight of the $T$-action on the orbit $[U_{\alpha}w] - [w]$ is $\alpha$.

The closure of each one-dimensional $T$-orbit in any GKM space (including $G/P$) consists of the $T$-orbit together with two extra points, each of which is fixed by the $T$-action \cite[Section (7.1)]{GKM}.  The set $[U_{\alpha}w] \subseteq [Uw]$ already contains one $T$-fixed point in $G/P$; we must find the other.  The closure of $[U_{\alpha}w]$ in $G/B$ is $[U_{\alpha}w] \cup [s_{\alpha}w]$ by \cite{C}.  The projection $G/B \rightarrow G/P$ is continuous so the closure of $[U_{\alpha}w]$ in $G/P$ contains $[s_{\alpha}w]$.  If $[s_{\alpha}w] \neq [w]$ in $W/W_J$ then we conclude that $[s_{\alpha}w]$ must be the second $T$-fixed point in the one-dimensional orbit in $G/P$.  By hypothesis the root $w^{-1}(\alpha) \in \Rn$ so $U_w^P \neq U_{s_{\alpha}w}^P$.  Using this hypothesis, Lemma \ref{U_w^P agrees on cosets} will show $[w] \neq [s_{\alpha}w]$.
\end{proof}

\subsection{The minimal word for $[w]$} \label{min rep}

We now describe explicitly the minimal-length word in $W$ for each $[w] \in W/W_J$.  Our main result is that the edges directed out of $[w]$ in the moment graph for $G/P$ are labeled exactly by roots whose corresponding root subgroups are contained in $U_w^P$, which in turn are exactly the ``inversions" of the minimal-length representative of $[w]$, as defined below.  This implies that the geometric quantity $\dim [Bw]$, the combinatorial quantity $\# \{\textup{edges directed out of } [w]\}$, and the algebraic quantity $\ell_P([w])$ are all equal.  We also include here several technical combinatorial results about roots and Weyl group elements.  We begin with the following definition.

\begin{definition}
The $P$-inversions at $w$ are the roots in the set
 \[\Inv{w}^P = \{\beta \in \Delta^+: w^{-1}(\beta) \in \Rn \}.\]  
 \end{definition}

(A similar set is often called $N(w)$, e.g. \cite[page 102]{BB}.)  If $P=B$ or equivalently $\Rn = \Delta^-$ we write $\Inv{w}$ instead of $\Inv{w}^B$.  Lemma \ref{special coset rep} showed that $\Inv{w}^P$ consists of the labels on the edges directed out of $w$ in the moment graph for $G/P$.  The next lemma follows from the definitions. 

\begin{lemma}  \label{U_w^P agrees on cosets} If $[w]=[v]$ in $G/P$ then $\Inv{w}^P=\Inv{v}^P$. \end{lemma}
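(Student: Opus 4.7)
The plan is to reduce the claim to the statement that $\Rn$ is stable under the natural action of $W_J$ on roots. If $[w]=[v]$ in $W/W_J$, write $v=wu$ for some $u\in W_J$, so that $v^{-1}(\beta)=u^{-1}w^{-1}(\beta)$ for every $\beta\in\Delta^+$. Once we know that $u^{-1}$ carries $\Rn$ to itself, the two conditions $v^{-1}(\beta)\in\Rn$ and $w^{-1}(\beta)\in\Rn$ are equivalent, and the equality $\Inv{v}^P=\Inv{w}^P$ follows at once.

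The remaining step is to show that $\Rn$ is $W_J$-stable, and it suffices to verify this for each simple generator $s_j$ with $j\in J$. Here I would invoke Lemma \ref{roots in n-}: $\alpha\in\Rn$ exactly when $-\alpha>\alpha_i$ for some $i\notin J$, or equivalently when $\alpha$ is a negative root whose simple-root expansion has a strictly negative coefficient on some $\alpha_i$ with $i\notin J$. The reflection $s_j$ acts by $s_j(\alpha)=\alpha-\langle\alpha,\alpha_j^{\vee}\rangle\alpha_j$, which alters only the $\alpha_j$-coefficient of $\alpha$. Since $i\notin J$ and $j\in J$ give $i\neq j$, the $\alpha_i$-coefficient of $s_j(\alpha)$ is still strictly negative. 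Because the simple-root coefficients of any root all share a common sign, $s_j(\alpha)$ must itself be a negative root, and hence lies in $\Rn$.

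The only conceptual step is the reduction to $W_J$-invariance of $\Rn$; the invariance itself is essentially combinatorial bookkeeping on simple-root coefficients, matching the paper's remark that the lemma follows from the definitions. No genuine obstacle is expected.
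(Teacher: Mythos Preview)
Your proposal is correct and follows essentially the same approach as the paper's proof: both reduce to showing that $\Rn$ is $W_J$-stable, and both establish this via Lemma \ref{roots in n-} together with the observation that applying $s_j$ (or any $u\in W_J$) changes a root only by an integer combination of simple roots $\alpha_j$ with $j\in J$, leaving the coefficients on $\alpha_i$ for $i\notin J$ untouched. The paper phrases the last step as a single statement about general $u\in W_J$ rather than reducing to simple generators, but the content is identical.
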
  

\begin{proof} Recall that if $s_i$ is a simple reflection then $s_i(\alpha) = \alpha + c_i \alpha_i$ for some $c_i \in \mathbb{Z}$.  Hence if $u \in W_J$ then $u^{-1}(\alpha) = \alpha + \sum_{i \in J} c_i \alpha_i$.  Lemma \ref{roots in n-} implies that $u^{-1}w^{-1}(\beta) \in \Rn$ if and only if $w^{-1}(\beta) \in \Rn$.
\end{proof}

The previous lemma implies that $\Inv{[w]}^P$ is well-defined on cosets $[w] \in W/W_J$, by setting $\Inv{[w]}^P = \Inv{w}^P$ for any (and hence every) representative $w \in [w]$.

\begin{lemma}
For each $w \in W$, there is a unique element $v \in W$ with $\Inv{v} = \Inv{w}^P$.\end{lemma}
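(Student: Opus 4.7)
The plan is to take $v$ to be the unique minimal-length representative $w' \in W^J$ of the coset $[w] \in W/W_J$, and to obtain uniqueness from the standard fact that the inversion set determines an element of $W$.

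For existence, Lemma \ref{U_w^P agrees on cosets} reduces the problem to showing $\Inv{w'} = \Inv{w'}^P$. The inclusion $\Inv{w'}^P \subseteq \Inv{w'}$ is immediate because $\Rn \subseteq \Delta^-$. For the reverse inclusion, take $\beta \in \Inv{w'}$ and set $\alpha = (w')^{-1}(\beta) \in \Delta^-$; I need $\alpha \in \Rn$. From $\Delta = \Delta_{\mathfrak{p}} \sqcup \Rn$ together with $\Delta_{\mathfrak{p}} = \Delta^+ \cup \Delta_{\mathfrak{l}}$, we have $\Delta^- = \Rn \sqcup (-\Delta^+_{\mathfrak{l}})$, so it suffices to rule out $\alpha \in -\Delta^+_{\mathfrak{l}}$. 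If $\alpha = -\gamma$ for $\gamma = \sum_{i \in J} c_i \alpha_i \in \Delta^+_{\mathfrak{l}}$ with $c_i \geq 0$, then minimality of $w'$ in $w' W_J$ forces $w'(\alpha_i) \in \Delta^+$ for each $i \in J$, so $w'(\gamma) = \sum_{i \in J} c_i\, w'(\alpha_i)$ is a nonnegative integer combination of positive roots. Since $w'(\gamma)$ is itself a root, it lies in $\Delta^+$, giving $\beta = -w'(\gamma) \in \Delta^-$, which contradicts $\beta \in \Delta^+$.

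For uniqueness, I invoke the well-known fact that if $\Inv{v_1} = \Inv{v_2}$ for $v_1, v_2 \in W$ then $v_1 = v_2$ (see e.g.\ \cite[Chapter 2]{BB}). Briefly, any nonempty inversion set contains a simple root $\alpha_i$ corresponding to a left descent, and $\Inv{s_i v}$ is determined from $\Inv{v}$ by reflecting and removing $\alpha_i$; induction on $\ell(v) = |\Inv{v}|$ then recovers $v$ from $\Inv{v}$ uniquely.

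The main obstacle is the root-combinatorial step in the existence argument: showing that the minimal-length coset representative $w'$ sends every element of $\Delta^+_{\mathfrak{l}}$ into $\Delta^+$. This amounts only to the observation above, but it is the one place where the hypothesis that $w'$ is minimal in its coset is essential, and it is where Lemma \ref{roots in n-} is used to translate between the algebraic description of $\Rn$ and the geometric decomposition of $\Delta^-$.
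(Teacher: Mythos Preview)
Your proof is correct, but it takes a genuinely different route from the paper's. The paper does not start by producing a candidate $v$; instead it invokes Kostant's criterion (from \cite{Ko}) that a subset $S\subseteq\Delta^+$ is an inversion set $\Inv{v}$ for some (unique) $v\in W$ if and only if both $S$ and $\Delta^+\setminus S$ are closed under addition of roots, and then verifies these two closure conditions directly for $\Inv{w}^P$ using Lemma~\ref{roots in n-}. Only in the subsequent corollary does the paper identify $v$ as the minimal-length coset representative. Your argument reverses this: you take the minimal-length representative $w'\in W^J$ up front (whose existence is recorded in the Notation section) and show $\Inv{w'}=\Inv{w'}^P$ by exploiting the standard characterization that $w'\in W^J$ forces $w'(\alpha_i)\in\Delta^+$ for $i\in J$, hence $w'(\Delta^+_{\mathfrak l})\subseteq\Delta^+$. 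This is more elementary---it avoids Kostant's result entirely---and simultaneously proves the lemma and its corollary in one stroke. The paper's approach, by contrast, is more self-contained in the sense that it does not presuppose the theory of minimal coset representatives, and it makes the ``biclosed'' structure of $\Inv{w}^P$ explicit, which is of independent interest.
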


\begin{proof}
We use Kostant's result \cite{Ko} that the set $\{\Inv{v}: v \in W\}$ is in bijection with the collection of subsets $S$ of $\Delta^+$ that satisfy the two conditions
\begin{enumerate}
\item if $\alpha, \beta \in S$ and $\alpha + \beta \in \Delta^+$ then $\alpha + \beta \in S$; and
\item if $\alpha, \beta \in \Delta^+ - S$ and $\alpha + \beta \in \Delta^+$ then $\alpha + \beta \not \in S$. 
\end{enumerate}
We also use the characterization of $\Rn$ as the set of roots $\alpha$ with $\alpha < -\alpha_i$ for at least one $i \not \in J$, as in Lemma \ref{roots in n-}.

Observe that if $\alpha, \beta \in \Inv{w}^P$ and $\alpha+\beta \in \Delta$ then $w^{-1}(\alpha) + w^{-1}(\beta) < -\alpha_i$ for at least one $i \not \in J$, for instance the $i$ such that $w^{-1}(\alpha) < -\alpha_i$.  Thus $\alpha + \beta \in \Inv{w}^P$.

Suppose $\alpha, \beta \in \Delta^+ - \Inv{w}^P$ and $\alpha + \beta \in \Delta$.  Write $w^{-1}(\alpha) = \sum c_i \alpha_i$ and $w^{-1}(\beta) = \sum d_i \alpha_i$.  We describe the $c_i$ (respectively $d_i$): 
\begin{itemize}
\item If $\alpha \in \Inv{w} - \Inv{w}^P$ then the $c_i$ are nonpositive, and nonzero only when $i \in J$.
\item If $\alpha \in \Delta^+ - \Inv{w}$ then the $c_i$ are positive.
\end{itemize}
Consequently the coefficients in the sum $\sum (c_i+d_i) \alpha_i$ can be negative only for $i \in J$.  Thus $w^{-1}(\alpha+\beta) \in \Delta_{\mathfrak{p}}$ and so $\alpha+\beta \not \in \Inv{w}^P$.

The set $\Inv{w}^P$ satisfies conditions (1) and (2) so there is a unique Weyl group element $v$ with $\Inv{v} = \Inv{w}^P$.
\end{proof}

\begin{corollary}
For each $w \in W$, the Weyl group element $v$ defined by $\Inv{v} = \Inv{w}^P$ is the unique minimal-length element of the coset $[w] \in W / W_J$.
\end{corollary}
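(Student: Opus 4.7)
The plan is to identify the element $v$ produced by the preceding lemma with the unique minimal-length representative $v_0$ of the coset $[w]$, whose existence the notation section already takes for granted. Concretely, I would show that $\Inv{v_0} = \Inv{w}^P$; the uniqueness statement in the preceding lemma then forces $v = v_0$, which simultaneously proves $v \in [w]$ and that it has minimal length there.

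Half of this equality is essentially free. Since $\Rn \subseteq \Delta^-$, the containment $\Inv{v_0}^P \subseteq \Inv{v_0}$ is immediate from the definitions, and Lemma \ref{U_w^P agrees on cosets} applied to $v_0 \in [w]$ gives $\Inv{v_0}^P = \Inv{w}^P$. The real content is therefore the reverse inclusion $\Inv{v_0} \subseteq \Inv{v_0}^P$.

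For this reverse inclusion, suppose $\beta \in \Inv{v_0}$, so $\beta \in \Delta^+$ and $v_0^{-1}(\beta) \in \Delta^-$. The decompositions $\Delta = \Delta_{\mathfrak{p}} \sqcup \Rn$ and $\Delta_{\mathfrak{p}} \cap \Delta^- = \Delta_{\mathfrak{l}}^-$ (both noted in the paper) combine to give $\Delta^- = \Delta_{\mathfrak{l}}^- \sqcup \Rn$, so if $v_0^{-1}(\beta) \notin \Rn$ then necessarily $v_0^{-1}(\beta) \in \Delta_{\mathfrak{l}}^-$. In that case $-v_0^{-1}(\beta)$ is a positive root of the subsystem $\Delta_{\mathfrak{l}}$, so it can be expanded as $-v_0^{-1}(\beta) = \sum_{i \in J} c_i \alpha_i$ with $c_i \in \mathbb{Z}_{\geq 0}$ and at least one $c_j > 0$. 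Applying $v_0$ yields
\[ -\beta = \sum_{i \in J} c_i\, v_0(\alpha_i). \]
The minimality of $v_0$ in its coset is equivalent to $\ell(v_0 s_i) > \ell(v_0)$ for every $i \in J$, i.e.\ $v_0(\alpha_i) \in \Delta^+$ for every $i \in J$ (the standard characterization of $W^J$; see \cite[Chapter 2]{BB}). Hence $-\beta$ is a nontrivial nonnegative combination of positive roots, so $-\beta \in \Delta^+$, contradicting $\beta \in \Delta^+$.

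The main obstacle is precisely this last step: converting the algebraic statement that $v_0$ has minimal length in its coset into the positivity condition $v_0(\alpha_i) > 0$ for $i \in J$, which is what powers the root-combination argument. Once that is in hand, the rest is bookkeeping with the decomposition $\Delta^- = \Delta_{\mathfrak{l}}^- \sqcup \Rn$ and an appeal to the uniqueness clause of the preceding lemma.
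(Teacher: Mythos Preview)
Your argument is correct. Both you and the paper use the easy inclusion $\Inv{u}^P \subseteq \Inv{u}$, but the emphasis differs. The paper's proof is a two-line counting argument: for every $u \in [w]$ one has $\ell(u) = \#\Inv{u} \geq \#\Inv{u}^P = \#\Inv{w}^P = \ell(v)$, with equality forcing $\Inv{u} = \Inv{w}^P$ and hence $u = v$ by Kostant's bijection. You instead start from the known minimal representative $v_0$ and prove directly that $\Inv{v_0} = \Inv{v_0}^P$ via the root-theoretic characterization of $W^J$ (namely $v_0(\alpha_i) > 0$ for $i \in J$), then invoke the uniqueness clause of the preceding lemma to conclude $v = v_0$.

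Your route has the advantage of making explicit the one point the paper's proof glosses over: that $v$ actually lies in $[w]$ (equivalently, that the lower bound $\ell(u) \geq \ell(v)$ is attained somewhere in the coset). The paper's counting argument, read literally, only shows that \emph{if} equality is achieved then the element is $v$; it does not verify that equality is achieved. Your argument supplies exactly this missing piece. The paper's approach is slicker once one accepts that gap as routine, but yours is self-contained.
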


\begin{proof}
It is well-known that for each $w \in W$ the length $\ell(w) = \# \Inv{w}$ (see e.g. \cite[Proposition 4.4.4]{BB}).  If $u \in [w]$ then $\Inv{u} \supseteq \Inv{u}^P$ with equality if and only if $u = v$.  This proves the claim.
\end{proof}

It follows that the length $\ell_P([w])$ can also be characterized as the number of edges directed out of $[w]$ in the moment graph of $G/P$.  If $[w]$ and $[s_{\alpha}w]$ are two fixed points joined by an edge then $\ell_P([w]) \neq \ell_P([s_{\alpha}w])$.  (These are combinatorial implications of the geometric statement that $G/P$ is Palais-Smale.)

%
\begin{lemma} \label{graph automorphisms}
The action of the Weyl group $W$ on $W/W_J$ given by left-multiplication induces a collection of graph automorphisms on the moment graph of $G/P$.
\end{lemma}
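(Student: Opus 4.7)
The plan is to define the action explicitly on vertices and verify in turn that it (i) is well-defined on cosets, (ii) is bijective, and (iii) sends edges to edges. The single algebraic fact driving everything is the conjugation identity $u s_\alpha u^{-1} = s_{u(\alpha)}$ in the Weyl group, combined with the characterization of edges in Theorem \ref{moment graph description}.

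First I would define $\phi_u\colon W/W_J \to W/W_J$ by $\phi_u([w]) = [uw]$. If $w' = wv$ for some $v \in W_J$, then $uw' = (uw)v$, so $[uw'] = [uw]$; thus $\phi_u$ is well-defined on cosets. Since $\phi_u \circ \phi_{u^{-1}} = \phi_{u^{-1}} \circ \phi_u$ is the identity, $\phi_u$ is a bijection of vertex sets, with inverse $\phi_{u^{-1}}$.

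Next I would show that $\phi_u$ carries edges to edges. Suppose there is an edge between $[w]$ and $[s_\alpha w]$ in the moment graph of $G/P$, so by Theorem \ref{moment graph description} we have $\alpha \in \Delta^+$ and $w^{-1}(\alpha) \in \pm \Rn$. Using the conjugation identity,
\[\phi_u([s_\alpha w]) = [u s_\alpha w] = [s_{u(\alpha)}\, uw].\]
Setting $\beta = u(\alpha)$ if $u(\alpha) > 0$ and $\beta = -u(\alpha)$ otherwise, and using $s_\gamma = s_{-\gamma}$, we have $\beta \in \Delta^+$ and
\[(uw)^{-1}(\beta) = \pm w^{-1}(\alpha) \in \pm \Rn,\]
so Theorem \ref{moment graph description} produces the required edge between $[uw] = \phi_u([w])$ and $[s_\beta\, uw] = \phi_u([s_\alpha w])$. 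Applying the same argument to $\phi_{u^{-1}}$ shows edges are preserved in both directions, completing the verification that $\phi_u$ is a graph automorphism.

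The main obstacle—such as it is—is purely a sign bookkeeping issue: a positive root $\alpha$ labeling an edge out of $[w]$ can be sent by $u$ to a negative root, so the edge out of $[uw]$ is formally labeled $-u(\alpha)$ rather than $u(\alpha)$. This is handled transparently because both the reflection $s_\alpha$ and the membership condition $w^{-1}(\alpha) \in \pm \Rn$ are invariant under $\alpha \mapsto -\alpha$. If one wishes to record compatibility of the labels with the natural $W$-action on $\h^*$ (relevant for the action on equivariant cohomology in Section \ref{W action section}), the same calculation shows that each edge label $\alpha$ is sent to $\pm u(\alpha)$, exactly matching $u$ acting linearly on $\h^*$.
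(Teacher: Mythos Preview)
Your proof is correct and follows essentially the same approach as the paper: both rest on the conjugation identity $u s_\alpha u^{-1} = s_{u(\alpha)}$ to show that left multiplication carries an edge $\{[w],[s_\alpha w]\}$ to $\{[uw],[s_{u(\alpha)}uw]\}$. The only cosmetic difference is that the paper verifies the edge condition via the coset-distinctness formulation (checking $[s_\alpha v]\neq[v]$ iff $[s_{u(\alpha)}uv]\neq[uv]$), whereas you invoke the equivalent root-theoretic condition $w^{-1}(\alpha)\in\pm\Rn$ from Theorem~\ref{moment graph description}; your sign bookkeeping for $\beta=\pm u(\alpha)$ is handled cleanly and the argument is complete.
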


\begin{proof}
Left multiplication by elements of the group $W$ permutes the cosets of $W/W_J$, namely the vertices of the moment graph.  We need only confirm that the edges of the moment graph are preserved by this action.  Inside the Weyl group 
\[ws_{\alpha}v = (ws_{\alpha}w^{-1})wv= s_{w(\alpha)}wv.\]
Note that $[s_{w(\alpha)}wv] = [wv]$ if and only if $v^{-1}s_{\alpha}v \in W_J$, equivalently $[s_{\alpha}v]=[v]$.  This means that there is an edge between two distinct cosets $[s_{\alpha}v]$ and $[v]$ in the moment graph for $G/P$  if and only if there is an edge between $[ws_{\alpha}v]= [s_{w(\alpha)}wv]$ and $[wv]$ in the moment graph for $G/P$.  
\end{proof}

The next lemma characterizes how $[s_iw]$ and $[w]$ differ.  It is a partial converse to a result of Deodhar \cite{D}.  Unlike in $G/B$, it is possible for $[s_iw] = [w]$ in $G/P$.

\begin{lemma}  \label{inversions}
Let $w \in W$ and let $s_i$ be a simple reflection.
\begin{enumerate}
\item If $[s_iw]=[w]$ then $s_i \Inv{w}^P = \Inv{w}^P$.  
\item If $[s_iw]>_P[w]$ then $s_i \Inv{s_iw}^P =\Inv{w}^P \cup \{-\alpha_i\}$.
\end{enumerate}
\end{lemma}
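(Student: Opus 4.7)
The overall plan is to analyze, in each case, what the coset condition forces about $w^{-1}(\alpha_i)$, and then unwind the definition $\Inv{w}^P = \{\beta \in \Delta^+ : w^{-1}(\beta) \in \Rn\}$ via the substitution $\gamma = s_i(\beta)$.

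For part (1), I will first show $\alpha_i \notin \Inv{w}^P$. The hypothesis $[s_iw]=[w]$ is equivalent to $w^{-1}s_iw \in W_J$, and since $w^{-1}s_iw = s_{w^{-1}(\alpha_i)}$ is a reflection, the only way it lies in the parabolic subgroup $W_J$ is for its root $w^{-1}(\alpha_i)$ to lie in $\Delta_{\mathfrak{l}} \subseteq \Delta_{\mathfrak{p}}$; in particular $w^{-1}(\alpha_i) \notin \Rn$, so $\alpha_i \notin \Inv{w}^P$. Combined with Lemma \ref{U_w^P agrees on cosets} (which gives $\Inv{s_iw}^P = \Inv{w}^P$), it is then direct to verify that $s_i$ stabilizes $\Inv{w}^P$: for any $\beta \in \Inv{w}^P$ the inequality $\beta \neq \alpha_i$ gives $s_i(\beta) \in \Delta^+$, and $w^{-1}s_i(\beta) = (s_iw)^{-1}(\beta) \in \Rn$ because $\beta \in \Inv{s_iw}^P$. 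This shows $s_i\Inv{w}^P \subseteq \Inv{w}^P$, and applying $s_i$ a second time promotes the containment to equality.

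For part (2), the first step is to read off the sign of $w^{-1}(\alpha_i)$ from the moment graph. The hypothesis $[s_iw] >_P [w]$ implies $[s_iw] \neq [w]$ together with a directed edge $[s_iw] \to [w]$, so Theorem \ref{moment graph description} forces $w^{-1}(\alpha_i) \in \pm\Rn$ and $w^{-1}(\alpha_i) > 0$; together these give $w^{-1}(\alpha_i) \in \Delta^+ \setminus \Delta_{\mathfrak{l}}$, from which both $\alpha_i \notin \Inv{w}^P$ and $w^{-1}(-\alpha_i) \in \Rn$ follow. I will then make the change of variables $\gamma = s_i(\beta)$ in
\[ s_i\Inv{s_iw}^P = \{s_i(\beta) : \beta \in \Delta^+,\ w^{-1}s_i(\beta) \in \Rn\}. \]
The condition $\beta = s_i(\gamma) \in \Delta^+$ holds iff $\gamma \in (\Delta^+ \setminus \{\alpha_i\}) \cup \{-\alpha_i\}$, while $w^{-1}s_i(\beta)$ becomes $w^{-1}(\gamma)$. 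Splitting the domain into these two pieces, and invoking the two facts $\alpha_i \notin \Inv{w}^P$ and $w^{-1}(-\alpha_i) \in \Rn$ from the first step, collapses the result to $\Inv{w}^P \cup \{-\alpha_i\}$.

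The only delicate step is translating the Bruhat relation $[s_iw] >_P [w]$ into the sign condition on $w^{-1}(\alpha_i)$; once this is read off from the edge-direction conventions of Theorem \ref{moment graph description}, everything else is routine set-theoretic bookkeeping with the substitution $\beta = s_i(\gamma)$.
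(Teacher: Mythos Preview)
Your proof is correct and follows essentially the same approach as the paper's. The paper packages the core computation inside the graph-automorphism language of Lemma~\ref{graph automorphisms} (edges out of $[v]$ labeled $\alpha\neq\alpha_i$ map under $s_i$ to edges out of $[s_iv]$ labeled $s_i(\alpha)$), while you work directly with the definition $\Inv{w}^P=\{\beta\in\Delta^+:w^{-1}(\beta)\in\Rn\}$ and the substitution $\gamma=s_i(\beta)$; unwinding either argument yields the same two ingredients, namely Lemma~\ref{U_w^P agrees on cosets} for part~(1) and the fact that $s_i$ permutes $\Delta^+\setminus\{\alpha_i\}$ for both parts. One cosmetic difference: in part~(1) you deduce $\alpha_i\notin\Inv{w}^P$ from $s_{w^{-1}(\alpha_i)}\in W_J\Rightarrow w^{-1}(\alpha_i)\in\Delta_{\mathfrak{l}}$, whereas the paper implicitly obtains the same conclusion from the edge description (if $\alpha_i\in\Inv{w}^P$ there would be an edge $[w]\mapsto[s_iw]$, contradicting $[s_iw]=[w]$).
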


\begin{proof}
From Lemma \ref{graph automorphisms}, we know that multiplication by each simple reflection $s_i$ induces a graph automorphism of the moment graph of $G/P$.  Suppose $[v] \mapsto [s_{\alpha}v]$ is an edge with $\alpha \neq \alpha_i$.  Then this automorphism sends the edge $[v] \mapsto [s_{\alpha}v]$ to the edge $[s_iv] \mapsto [s_is_{\alpha}v] = [s_{s_i(\alpha)} (s_iv)]$ because
\begin{itemize}
\item  $v^{-1}(\alpha)$ and $(s_iv)^{-1}(s_i(\alpha))$ have the same sign, and 
\item $s_i(\alpha)$ is positive as long as $\alpha \neq \alpha_i$.  
\end{itemize}

If $[s_iw]=[w]$ then $\Inv{s_iw}^P =  \Inv{w}^P$ by Lemma \ref{U_w^P agrees on cosets}.  Hence $s_i$ permutes the edges directed out of $[w]$, namely $s_i\Inv{w}^P=\Inv{w}^P$.

If $[s_iw]>_P [w]$ then $\alpha_i$ labels an edge directed from $[s_iw]$ to $[w]$ in the moment graph of $G/P$.  Hence $\alpha_i \in \Inv{s_iw}^P$ and $\alpha_i \not \in \Inv{w}^P$.  Let $\beta \neq \alpha_i$ be a positive root.  Note that $s_i(\beta) \in \Inv{w}^P$ if and only if $s_i(\beta)$ is positive and $\beta \in \Inv{s_iw}^P$. 
The simple reflection $s_i$ permutes the positive roots not equal to $\alpha_i$.  We conclude $\Inv{s_iw}^P = s_i \Inv{w}^P \cup \{\alpha_i\}$. 
\end{proof}

\begin{corollary} \label{step by one}
If $s_i$ is a simple reflection in $W$ and $[w]$ is a coset in $W/W_J$
then $|\ell_P([s_iw]) - \ell_P([w])| \leq 1$.  
\end{corollary}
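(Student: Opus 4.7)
The plan is to show that multiplying by a simple reflection changes $|\Inv{w}^P|$ by at most one, and then to invoke the identification (noted in the paragraph immediately after Lemma \ref{inversions}) of $\ell_P([w])$ with $|\Inv{w}^P|$. Everything reduces to Lemma \ref{inversions}, applied in the three mutually exclusive cases $[s_iw] = [w]$, $[s_iw] >_P [w]$, and $[s_iw] <_P [w]$.

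In the first case, Lemma \ref{U_w^P agrees on cosets} gives $\Inv{s_iw}^P = \Inv{w}^P$ directly, so the two lengths coincide. For the case $[s_iw] >_P [w]$, I would first rewrite part (2) of Lemma \ref{inversions} in the equivalent form $\Inv{s_iw}^P = s_i \Inv{w}^P \cup \{\alpha_i\}$, obtained by applying $s_i$ to both sides of the stated identity (and using $s_i(-\alpha_i) = \alpha_i$). Since $\alpha_i$ labels the edge directed from $[s_iw]$ to $[w]$, we have $\alpha_i \notin \Inv{w}^P$; consequently $s_i$ restricts to a bijection $\Inv{w}^P \to s_i\Inv{w}^P$ between subsets of $\Delta^+ \setminus \{\alpha_i\}$ (because $s_i$ permutes the positive roots distinct from $\alpha_i$), and moreover $\alpha_i$ itself is not an element of $s_i\Inv{w}^P$ (which would require $-\alpha_i \in \Inv{w}^P \subseteq \Delta^+$). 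The union is therefore disjoint, and counting yields $\ell_P([s_iw]) = \ell_P([w]) + 1$.

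The case $[s_iw] <_P [w]$ is dispatched by applying the previous case with $s_iw$ in the role of $w$ (note $s_i(s_iw) = w$), giving $\ell_P([w]) = \ell_P([s_iw]) + 1$. In all three cases $|\ell_P([s_iw]) - \ell_P([w])| \leq 1$, as required.

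I do not expect any significant obstacle; the corollary is essentially a bookkeeping consequence of Lemma \ref{inversions}. The only point that requires some care is verifying that the set $s_i\Inv{w}^P$ is contained in $\Delta^+$ and is disjoint from $\{\alpha_i\}$, both of which hinge on the fact that $\alpha_i \notin \Inv{w}^P$ whenever $[s_iw] >_P [w]$.
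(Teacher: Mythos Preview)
Your proof is correct and matches the paper's intent: the paper states Corollary~\ref{step by one} without proof, treating it as an immediate consequence of Lemma~\ref{inversions}, and your argument is precisely the natural unpacking of that lemma into the three cases. One small correction of reference: the identification $\ell_P([w]) = |\Inv{w}^P|$ is recorded in the paragraph following the corollary on minimal-length representatives (i.e.\ \emph{before} Lemma~\ref{inversions}), not after it.
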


\begin{corollary} \label{simple edges}
If $w \in W^J$ and $s_i$ is an arbitrary simple reflection then either $[s_iw]=[w]$ in $W/W_J$ or $s_iw \in W^J$.
\end{corollary}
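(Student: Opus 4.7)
The plan is to compare the length $\ell(s_iw)$ in $W$ with the coset length $\ell_P([s_iw])$ and show they agree; this forces $s_iw$ to be the unique minimal-length representative of its coset and therefore an element of $W^J$.

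Assume $[s_iw] \neq [w]$ (otherwise the first alternative of the corollary holds). A standard Coxeter-theoretic fact gives $\ell(s_iw) = \ell(w) + 1$ when $w^{-1}(\alpha_i) > 0$ and $\ell(s_iw) = \ell(w) - 1$ when $w^{-1}(\alpha_i) < 0$. By Theorem \ref{moment graph description}(3) applied to the root $\alpha = \alpha_i$, the sign of $w^{-1}(\alpha_i)$ also determines the direction of the edge between $[w]$ and $[s_iw]$ in the moment graph of $G/P$: in the first case the edge runs $[s_iw]\mapsto[w]$, so $[s_iw] >_P [w]$; in the second case $[w]\mapsto[s_iw]$, so $[w] >_P [s_iw]$.

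In each case I would invoke Lemma \ref{inversions}(2), once directly and once after interchanging the roles of $w$ and $s_iw$ (which is legitimate since $s_i(s_iw) = w$). Reading off cardinalities from the set identity in that lemma shows that $\ell_P$ changes by exactly one along the edge, in the same direction as $\ell$:
\[\ell_P([s_iw]) - \ell_P([w]) \;=\; \ell(s_iw) - \ell(w) \;=\; \pm 1.\]
Since $w \in W^J$ we have $\ell(w) = \ell_P([w])$, and subtracting this from the displayed equation yields $\ell(s_iw) = \ell_P([s_iw])$. Because each coset in $W/W_J$ has a unique minimal-length representative in $W$, it follows that $s_iw$ is that representative, so $s_iw \in W^J$.

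The main thing to nail down carefully is the three-way dictionary among the sign of $w^{-1}(\alpha_i)$, the direction of the moment-graph edge, and the sign of $\ell(s_iw) - \ell(w)$; everything else is just cardinalities from Lemma \ref{inversions}(2) and the defining property of $W^J$. One subtlety is that Corollary \ref{step by one} gives only $|\ell_P([s_iw])-\ell_P([w])| \leq 1$, so it is Lemma \ref{inversions} (not merely that corollary) that rules out the case of equal coset lengths and thereby guarantees the matching $\pm 1$ change on both sides of the key equation.
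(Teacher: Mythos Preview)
Your argument is correct and follows essentially the same route as the paper: both proofs compare $\ell(s_iw)$ with $\ell_P([s_iw])$ using Lemma~\ref{inversions} and conclude by uniqueness of minimal-length coset representatives. The only minor difference is that in the case $[s_iw]<_P[w]$ the paper bootstraps from the first case (applying it to the minimal representative $u$ of $[s_iw]$ and deducing $s_iu=w$), whereas you handle that case directly by swapping the roles of $w$ and $s_iw$ in Lemma~\ref{inversions}(2); your symmetric treatment is slightly more uniform but not materially different.
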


\begin{proof}
We know that $\Inv{w}^P = \Inv{w}$ since $w$ is the minimal-length representative of $[w]$.

If $[s_iw]>_P[w]$ then $\alpha_i \not \in \Inv{w}^P$ and $\alpha_i \in \Inv{s_iw}^P$.  Hence $\alpha_i \in \Inv{s_iw}$ so $s_iw>w$.  It follows that $\ell(s_iw) = \ell(w)+1$.  Since $\ell_P([s_iw])=\ell_P([w])+1$ and $\ell_P([w])=\ell(w)$, we conclude $\ell(s_iw)=\ell_P([s_iw])$.  This means that $\Inv{s_iw}=\Inv{s_iw}^P$ and $s_iw \in W^J$.

Now suppose $[s_iw]<_P[w]$.  Let $u$ be the minimal-length representative of $[s_iw]$.
Since $[s_iu]=[w]$, the previous case proved $s_iu$ is a minimal-length representative
of $[w]$.  Each coset of $W/W_J$ has a unique minimal-length element so $s_iu=w$.
\end{proof}

\subsection{The moment graph of $G/P$ as a quotient graph and as a subgraph} \label{sub and quotient}

The Bruhat order is traditionally described by the graph of its covering relations, namely the set of Weyl group elements together with edges between $v$ and $w$ if $v>w$ and $v$ and $w$ differ by length one.  The moment graph of $G/B$ contains the Bruhat order as a subgraph but also includes additional edges corresponding to reflections that change length by more than one.  The next result generalizes this statement to $G/P$.  It is paired with a dual result.  The net outcome is that the moment graph of $G/P$ can be constructed combinatorially from that of $G/B$ either as a subgraph (with additional edges) or as a quotient graph.  The quotient $W/W_J$ is endowed with a poset structure inherited from the Bruhat order on the minimal-length representatives of $W/W_J$.  This is proven in \cite[Proposition 1.1]{S}; we include a proof here because of its brevity.  

Recall that if $(V,E)$ is a graph and $V_1, V_2, \ldots, V_k$ is a partition of $V$ into $k$ disjoint parts, then the {\em quotient graph} has vertex set $\{1,2,\ldots,k\}$ and has an edge $(i,j)$ if and only if there is an edge $(u,v) \in E$ for some $u \in V_i$ and $v \in V_j$ \cite[59.4.2]{MS}.

\begin{theorem}
\begin{enumerate}
\item The moment graph of $G/P$ is the quotient graph of the moment graph of $G/B$ determined by the partition of $W$ into cosets $W/W_J$.  
\item The subgraph of the moment graph of $G/B$ induced by the Weyl group elements $W^J$ is a subgraph of the moment graph of $G/P$ that contains all the covering relations in the moment graph of $G/P$.
\end{enumerate}
\end{theorem}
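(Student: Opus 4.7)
The plan is to prove both assertions by translating them into the combinatorics of cosets and inversions via Theorem \ref{moment graph description}. Since the vertex sets involved are indexed by $W$ or $W/W_J$ in a canonical way, the real work is matching up edges and labels. One key observation drives both parts: for any $u \in W$ and any $\alpha > 0$, the cosets $[u]$ and $[s_\alpha u]$ agree if and only if $u^{-1}(\alpha) \in \Delta_{\mathfrak{l}}$, because $[s_\alpha u] = [u \cdot s_{u^{-1}(\alpha)}]$ and $s_{u^{-1}(\alpha)} \in W_J$ exactly when $u^{-1}(\alpha) \in \Delta_{\mathfrak{l}}$. Equivalently, as soon as $[u] \neq [s_\alpha u]$ we automatically have $u^{-1}(\alpha) \in \pm \Rn$.

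For part (1), I argue as follows. In the $G/B$ moment graph, $u$ and $v$ are joined iff $v = s_\alpha u$ for some $\alpha > 0$, labeled $\alpha$. Passing to the quotient by the partition of $W$ into cosets of $W_J$ and using that $[s_\alpha u'] = [s_\alpha u]$ whenever $u' \in [u]$, an edge between distinct cosets $[u] \neq [v]$ in the quotient exists precisely when $[v] = [s_\alpha u]$ for some $\alpha > 0$. The key observation above then promotes this automatically to $u^{-1}(\alpha) \in \pm \Rn$, which is exactly the $G/P$ edge condition from Theorem \ref{moment graph description}. The edge label is inherited unchanged from $G/B$.

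For part (2) I split into two subclaims. Subclaim (a), that every edge of the induced subgraph on $W^J$ is an edge of the $G/P$ moment graph, is immediate from the key observation, because two distinct minimal-length representatives lie in distinct cosets. Subclaim (b), the substantive part, says every covering edge of the $G/P$ moment graph appears between the corresponding minimal-length representatives in $G/B$. Given such a directed edge $[w^J] \to [v^J]$ with $\ell(v^J) = \ell(w^J) - 1$, Theorem \ref{moment graph description} furnishes a positive root $\alpha$ with $[s_\alpha w^J] = [v^J]$ and $(w^J)^{-1}(\alpha) < 0$. The parabolic decomposition writes $s_\alpha w^J = v^J \cdot t$ with $t \in W_J$, yielding the length identity $\ell(s_\alpha w^J) = \ell(v^J) + \ell(t) = \ell(w^J) - 1 + \ell(t)$; combined with $\ell(s_\alpha w^J) < \ell(w^J)$ this forces $\ell(t) = 0$, so $t = 1$ and $v^J = s_\alpha w^J$. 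This is exactly the required edge in $G/B$, labeled $\alpha$.

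The main obstacle I anticipate is the length calculation in subclaim (b): $s_\alpha$ is a general (not necessarily simple) reflection, so $\ell(s_\alpha w^J)$ could in principle drop below $\ell(w^J) - 1$, and only the covering hypothesis together with the parabolic-decomposition length identity $\ell(xy) = \ell(x) + \ell(y)$ for $x \in W^J$ and $y \in W_J$ pins $\ell(t)$ at zero. A secondary technical point is that the arrow direction must be well-defined on cosets: one needs $W_J$ to preserve the sign of any root in $\pm \Rn$, which holds because each $s_j$ with $j \in J$ modifies only the $\alpha_j$-coefficient of a root, and the sign of a root outside $\Delta_{\mathfrak{l}}$ is determined by a coefficient indexed by some $k \notin J$.
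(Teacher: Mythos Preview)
Your proof is correct and follows essentially the same approach as the paper's. For part (1) you are more explicit than the paper in verifying that the condition $u^{-1}(\alpha)\in\pm\Rn$ from Theorem~\ref{moment graph description} is automatic once $[u]\neq[s_\alpha u]$, and for part (2)(b) you phrase the key length argument via the parabolic decomposition $s_\alpha w^J=v^J t$ with $\ell(s_\alpha w^J)=\ell(v^J)+\ell(t)$, whereas the paper argues directly that $\ell(s_\alpha w)\le\ell(v)$ and invokes minimality of $v$ in its coset; these are the same argument in different clothing.
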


\begin{proof}
By Theorem \ref{moment graph description}, the vertices of the moment graph of $G/P$ are the cosets $W/W_J$.  There is an edge between two vertices $[w], [v] \in W/W_J$ if and only if $[v] = [s_{\alpha}w]$ for some $\alpha$; equivalently, if and only if there is an edge between two representatives of the cosets in the moment graph of $G/B$.  By definition, the moment graph of $G/P$ is the quotient of the moment graph of $G/B$ by $W_J$.

We now prove the second part.  Suppose $w$ and $s_{\alpha}w$ are both in $W^J$ and that there is an edge between $w$ and $s_{\alpha}w$ in the moment graph for $G/B$.  Then $[w] \neq [s_{\alpha}w]$ because $W^J$ is a set of distinct coset representatives for $W/W_J$, so there is an edge between $[w]$ and $[s_{\alpha}w]$ in the moment graph of $G/P$.  

Conversely, suppose $w, v \in W^J$ differ in length by one, and that there is an edge from $[w]$ to $[v]$ in the moment graph of $G/P$, say labeled $\alpha$.  This means $\alpha \in \Inv{w}^P$.  Since $w \in W^J$ we know that $\Inv{w} = \Inv{w}^P$ and so $s_{\alpha}w<w$ in the Bruhat order.  Thus $\ell(s_{\alpha}w) \leq \ell(v)$.  It follows that $s_{\alpha}w = v$ since $[s_{\alpha}w] = [v]$ and $v$ has minimum length in the coset $[v]$.  Hence the edge from $w$ to $v$ is in the moment graph of $G/B$.
\end{proof}

\begin{remark}\label{remark: covering relations determine graph} The covering relations determine the entire moment graph of $G/P$.  In fact, any edge $[w] \stackrel{\alpha}{\mapsto} [s_{\alpha}w]$ is determined by the edges $[v] \mapsto [s_iv]$ for simple reflections $s_i$ together with transitivity of left multiplication.  To see this, denote an undirected edge in the moment graph by $[w] \leftrightarrow [v]$, including the self-edges like $[s_iw]=[w]$.  If $s_{\alpha}$ is a reflection and $s_{\alpha} = s_{i_1}s_{i_2} \cdots$ is any minimal-length factorization, then the (undirected) path $[w] \leftrightarrow [s_{i_1}w] \leftrightarrow [s_{i_2}s_{i_1}w] \leftrightarrow \cdots \leftrightarrow [s_{\alpha}w]$ determines the edge $[w] \leftrightarrow [s_{\alpha}w]$ in the moment graph of $G/P$.
\end{remark}

For instance, Figure \ref{quotient/sub} shows the moment graph for the flag variety $GL_3/B$, namely full flags in $\C^3$, together with the moment graph for the Grassmannian of lines in $\C^3$, namely $\mathbb{P}^1$. 
\begin{figure}[h]
\begin{picture}(360,75)(10,-45)
\put(180,-30){\circle*{5}}
\put(180,30){\circle*{5}}
\put(165,-15){\circle*{5}}
\put(195,-15){\circle*{5}}
\put(165,15){\circle*{5}}
\put(195,15){\circle*{5}}

\put(180,-30){\line(-1,1){15}}
\put(195,15){\line(-1,1){15}}
\put(195,-15){\line(-1,1){30}}

\put(179,-29){\line(1,1){15}}
\put(164,16){\line(1,1){15}}
\put(164,-14){\line(1,1){30}}
\put(181,-31){\line(1,1){15}}
\put(166,14){\line(1,1){15}}
\put(166,-16){\line(1,1){30}}

\multiput(180,-30)(0,4){15}{\line(0,1){2}}
\multiput(165,-15)(0,4){7}{\line(0,1){2}}
\multiput(195,-15)(0,4){7}{\line(0,1){2}}

\multiput(180,-30)(3,3){5}{\circle*{2}}
\multiput(165,-15)(0,4){7}{\circle*{2}}
\multiput(195,15)(-3,3){5}{\circle*{2}}

\put(280,-30){\circle*{5}}
\put(265,-15){\circle*{5}}
\put(295,15){\circle*{5}}

\put(280,-30){\line(-1,1){15}}
\put(264,-14){\line(1,1){30}}
\put(266,-16){\line(1,1){30}}
\multiput(280,-30)(2,6){8}{\circle*{1}}
\multiput(281,-27)(2,6){7}{\circle*{1}}

\put(20,20){\line(1,0){20}}
\put(20,1){\line(1,0){20}}
\put(20,-1){\line(1,0){20}}
\multiput(20,-20)(4,0){5}{\line(1,0){2}}
\multiput(20,-40)(3,0){7}{\circle*{2}}

\put(43,18){$=t_1-t_2$}
\put(43,-2){$=t_2-t_3$}
\put(43,-22){$=t_1-t_3$}
\put(43,-42){$=$ quotient}

\put(185,-32){$e$}
\put(199,-17){$s_2$}
\put(199,13){$s_2s_1$}
\put(183,33){$s_2s_1s_2$}
\put(150,-17){$s_1$}
\put(140,13){$s_1s_2$}

\put(285,-32){$[e]$}
\put(246,-17){$[s_1]$}
\put(299,13){$[s_2s_1]$}

\put(160,-45){$GL_3/B$}
\put(260,-45){$G(1,3)$}
\end{picture}
\caption{The moment graph for $GL_3/B$ and for $G(1,3)$}\label{quotient/sub}
\end{figure}
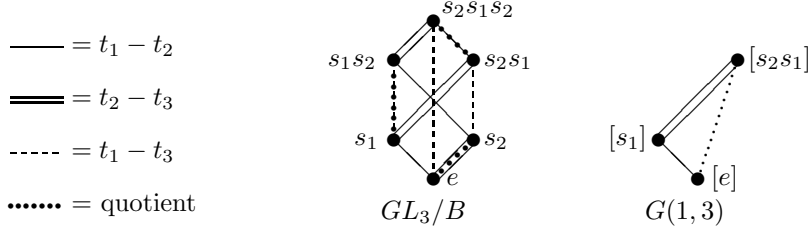
 If the graph for $G(1,3)$ is constructed as a quotient graph, then the edges marked by large dots are collapsed in the moment graph of $GL_3/B$.  Note that the edges between cosets are well-defined.  If the graph for $G(1,3)$ is constructed as a subgraph, then it is the graph induced by the vertices $e$, $s_1$, and $s_2s_1$ from the moment graph of $GL_3/B$.  Any vertex that is not on an edge $t_i-t_{i+1}$ can be considered to have a self-edge labeled $t_i-t_{i+1}$.  We can then reconstruct the rest of the edges by transitivity.

\subsection{Examples}\label{examples} We describe moment graphs for the Grassmannian of $k$-planes in $\C^n$ as well as the isotropic Grassmannians of classical types.  

It is common to denote the vertices of the moment graph for $G(k,n)$ as $n$-bit binary strings with exactly $k$ ones \cite{KT}.  The permutation $w \in S_n$ acts on the $n$-bit binary string $b_1 \cdots b_n$ by $w (b_1 \cdots b_n) = b_{w(1)} \cdots b_{w(n)}$. If ${\bf b}$ and ${\bf c}$ are distinct bit strings with $(ij) {\bf b} = {\bf c}$ then the vertices ${\bf b}$ and ${\bf c}$ are joined by an edge labeled $t_i - t_j$.  If $j>i$ the edge is directed from the bit string whose $j^{th}$ bit is one to the bit string whose $j^{th}$ bit is zero, for instance $1010 \mapsto 1100$ in $G(2,4)$.  To recover the permutation corresponding to a bit string ${\bf b}$, choose a minimal-length directed path from ${\bf b}$ to $1^k 0^{n-k}$ whose edges are labeled by simple roots $t_i -t_{i+1}$, for instance
\[{\bf b} \stackrel{t_{i_m} - t_{i_m+1}}{\mapsto} (i_m, i_m+1) {\bf b} \stackrel{t_{i_{m-1}}-t_{i_{m-1}+1}}{\mapsto}  \cdots \stackrel{t_{i_1}-t_{i_1+1}}{\mapsto} 1^k0^{n-k}.\]  
The minimal-length permutation for ${\bf b}$ is $(i_m, i_m+1) (i_{m-1}, i_{m-1}+1) \cdots (i_1, i_1+1)$.  Figure \ref{quotient/sub} gave the moment graph of $G(1,3) = \mathbb{P}^1$.  The moment graph for $G(2,4)$ is an octahedron and can be found in \cite[Figure 7]{KT} or in \cite[Figure 11]{T3}. 

The general (maximal) Grassmannian is the collection of $k$-planes in $\C^N$ that are isotropic with respect to some linear form, namely planes $V$ that satisfy the condition $\langle v_1, v_2 \rangle = 0$ for all $v_1, v_2 \in V$.  Their moment graphs are similar to that of the Grassmannian $G(k,n)$.  We describe each Lie type in turn.

First we describe the isotropic $k$-planes in $\C^{2n}$ that satisfy a skew-symmetric form, namely maximal Grassmannians of type $C_n$.  Define a skew-symmetric form on the basis vectors $e_i$ by 
\begin{itemize}
\item $\langle e_i, e_{2n+1-i} \rangle = 1$ if $i<2n+1-i$ and $\langle e_i, e_{2n+1-i} \rangle = -1$ if $i>2n+1-i$; 
\item $\langle e_i, e_j \rangle = 0$ otherwise.  
\end{itemize}
The vertices of the moment graph for the corresponding isotropic Grassmannian are the $2n$-bit binary strings $b_1 \cdots b_{2n}$ with exactly $k$ ones such that $b_i b_{2n+1-i} = 0$ for all $i$.  The whole permutation group $S_n$ does not act on this set of bit strings, but the subgroup generated by the following permutations does:
\begin{description}
\item[Type I] $(ij)(2n+1-i, 2n+1-j)$ for all $i < j \leq n$,
\item[Type II] $(i,2n+1-j)(j,2n+1-i)$ for all $i < j \leq n$, and 
\item[Type III] $(i,2n+1-i)$ for all $i \leq n$.
\end{description}
Let $s$ be one of the permutations listed above.  If ${\bf b}$ and ${\bf c}$ are two different bit strings such that $s {\bf b} = {\bf c}$ then ${\bf b}$ and ${\bf c}$ are joined by an edge.  This edge is labeled $t_i - t_j$ if $s$ is Type I, $t_i + t_j$ if $s$ is Type II, and $2t_i$ if $s$ is Type III.  The directions are induced from the directions in the moment graph of the ordinary Grassmannian, in the following sense.  If $s$ is one of the permutations listed above then $s {\bf b} \geq b$ if the relevant condition below holds. 
\begin{description}
\item[$s$ is Type I] $b_i \leq b_j$ and $b_{2n+1-j} \leq b_{2n+1-i}$; 
\item[$s$ is Type II] $b_i \leq b_{2n+1-j}$ and $b_j \leq b_{2n+1-i}$; and 
\item[$s$ is Type III] $b_i \leq b_{2n+1-i}$.
\end{description}
If $s {\bf b} \neq {\bf b}$ then at least one of the inequalities in this list will be a strict equality, and the corresponding edge will be directed $s{\bf b} \mapsto {\bf b}$. The simple reflections for type $C_n$ are Type I reflections of the form 
\[s_i = (i,i+1)(2n-i,2n+1-i) \textup{  for  } i<n\] 
and the Type III reflection $s_n= (n,n+1)$.  The minimum-length permutation associated to the fixed point ${\bf b}$ is found in the same way as for $G(k,n)$, namely if
\[{\bf b} {\mapsto} s_{i_m} {\bf b} {\mapsto} s_{i_{m-1}} s_{i_m} {\bf b} {\mapsto} \cdots {\mapsto}  s_{i_1} \cdots s_{i_{m-1}} s_{i_m} {\bf b}  = 1^k0^{n-k}\]  
is a minimal-length path from ${\bf b}$ to $1^k0^{n-k}$ labeled by simple roots then the minimum-length permutation for ${\bf b}$ is $s_{i_m} s_{i_{m-1}} \cdots s_{i_1}$.  Figure \ref{Grass examples} shows the subgraph of the moment graph of $IG(2,6)$ induced from the edges labeled $t_1-t_2$, $t_2-t_3$, and $2t_3$, namely those corresponding to simple reflections. 
 \begin{figure}[h]
\begin{picture}(360,60)(-50,-30)
\put(60,0){\circle*{5}}
\put(90,0){\circle*{5}}
\put(120,20){\circle*{5}}
\put(120,-20){\circle*{5}}
\put(150,20){\circle*{5}}
\put(150,-20){\circle*{5}}
\put(180,20){\circle*{5}}
\put(180,-20){\circle*{5}}
\put(210,20){\circle*{5}}
\put(210,-20){\circle*{5}}
\put(240,0){\circle*{5}}
\put(270,0){\circle*{5}}

\put(60,1){\line(1,0){30}}
\put(60,-1){\line(1,0){30}}
\put(90,0){\line(3,2){30}}
\multiput(90,0)(3,-2){10}{\circle*{1}}
\multiput(120,20)(2,0){15}{\circle*{1}}
\put(120,-20){\line(3,4){30}}
\put(120,-21){\line(1,0){30}}
\put(120,-19){\line(1,0){30}}
\put(150,21){\line(1,0){30}}
\put(150,19){\line(1,0){30}}
\put(150,-20){\line(1,0){30}}
\multiput(180,20)(2,0){15}{\circle*{1}}
\put(180,20){\line(3,-4){30}}
\put(180,-21){\line(1,0){30}}
\put(180,-19){\line(1,0){30}}
\put(240,0){\line(-3,2){30}}
\multiput(240,0)(-3,-2){10}{\circle*{1}}
\put(240,1){\line(1,0){30}}
\put(240,-1){\line(1,0){30}}

\put(25,-3){\small 110000}
\put(275,-3){\small 000011}

\put(95,-30){\small 100100}
\put(210,-30){\small 001001}
\put(170,-30){\small 010001}
\put(135,-30){\small 100010}

\put(95,25){\small 011000}
\put(210,25){\small 000110}
\put(170,25){\small 001010}
\put(135,25){\small 010100}

\put(230,7){\small 000101}
\put(67,-10){\small 101000}

\put(-40,20){\line(1,0){15}}
\put(-40,1){\line(1,0){15}}
\put(-40,-1){\line(1,0){15}}
\multiput(-40,-20)(2,0){7}{\circle*{1}}
\put(-23,18){\small $=t_1-t_2$}
\put(-23,-2){\small $=t_2-t_3$}
\put(-23,-22){$=2t_3$}
\end{picture}
\caption{The moment graph of $IG(2,6)$ of type $C_3$}\label{Grass examples}
\end{figure}
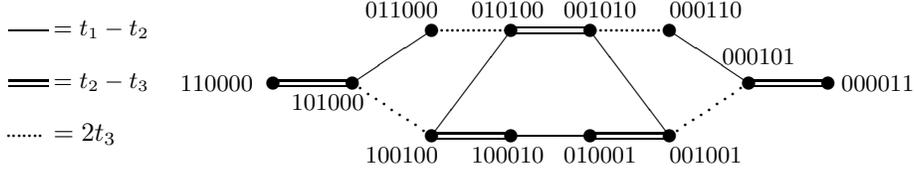
The other edges have been omitted to make the Figure more readable.  They can be reconstructed by the procedure described in Remark \ref{remark: covering relations determine graph}; for instance the full moment graph has an edge $010001 \mapsto 010100$ labeled $t_1 - t_3$.  (The figure has also been rotated so that edges are directed from right to left rather than up to down.)

The moment graphs for Grassmannians of isotropic $k$-planes in $\C^N$ that satisfy a symmetric bilinear form, namely maximal Grassmannians of type $B_n$ and $D_n$, are similar.  Both use the symmetric bilinear form defined on the basis vectors $e_i$ by $\langle e_i, e_{N+1-i} \rangle = 1$ and $\langle e_i, e_j \rangle = 0$ otherwise.  In each case, we will describe the fixed points, the simple reflections, and the directions of the edges corresponding to simple reflections in the moment graph.  The edges corresponding to simple reflections will determine the rest of the edges as described in Remark \ref{remark: covering relations determine graph}.  (The simple reflections generate all the reflections inductively according to the rule that if $s_{\alpha}$ and $s_{\beta}$ are two reflections then $s_{\beta} s_{\alpha} s_{\beta}$ is also a reflection, corresponding to the root $s_{\beta}(\alpha)$.) We will describe how simple reflections act on roots; this action extends naturally to all reflections.  The minimal-length Weyl group element corresponding to each fixed point can be reconstructed analogously to types $A_n$ and $C_n$.

In type $B_n$, we consider the Grassmannian of isotropic $k$-planes in $\C^{2n+1}$.  The fixed points are indexed by bit strings of length $2n+1$ with exactly $k$ ones such that $b_i b_{2n+2-i} = 0$ for each $i$.  (In particular, this implies that $b_{n+1}=0$.)  The group that acts on these fixed points is generated by the simple reflections
\begin{itemize}
\item $s_i = (i,i+1)(2n+1-i, 2n+2-i)$ for all $i=1, \ldots, n-1$ and
\item $s_n = (n,n+2)$.
\end{itemize}
When $i<n$ the edge between ${\bf b}$ and $s_i {\bf b}$ is labeled $t_i - t_{i+1}$.  When $i=n$ the edge between ${\bf b}$ and $s_i {\bf b}$ is labeled $t_n$.  The edge is directed $s_i {\bf b} \mapsto {\bf b}$ if
\begin{itemize}
\item $b_i \leq b_{i+1}$ and $b_{2n+1-i} \leq b_{2n+2-i}$ for $i=1, \ldots, n-1$ (with at least one strict inequality), or
\item $b_n < b_{n+2}$ for $i=n$.
\end{itemize}
If $i \neq n$ the simple reflection $s_i$ exchanges the variables $t_i$ and $t_{i+1}$ and fixes the other variables; the simple reflection $s_n$ sends $t_n$ to $-t_n$ and fixes the other variables.  An arbitrary reflection $s_{\alpha}$ factors into a product of simple reflections, extending this action from simple reflections to all reflections; Remark \ref{remark: covering relations determine graph} now determines the moment graph.

In type $D_n$, we consider the Grassmannian of isotropic $k$-planes in $\C^{2n}$.  The fixed points are indexed by bit strings of length $2n$ with exactly $k$ ones for $k = 1, \ldots, n-2$ or $k=n$, that also satisfy $b_ib_{2n+1-i}=0$ (as in type $C_n$).  The group that acts on these fixed points is generated by the simple reflections
\begin{itemize}
\item $s_i = (i,i+1)(2n-i,2n+1-i)$ for $i=1,\ldots,n-1$ and
\item $s_n = (n,n+2)(n-1,n+1)$.
\end{itemize}
When $i<n$ the edge between ${\bf b}$ and $s_i {\bf b}$ is labeled $t_i - t_{i+1}$.  When $i=n$ the edge between ${\bf b}$ and $s_i {\bf b}$ is labeled $t_{n-1}+t_n$.  The edge is directed $s_i {\bf b} \mapsto {\bf b}$ if
\begin{itemize}
\item $b_i \leq b_{i+1}$ and $b_{2n-i} \leq b_{2n+1-i}$ for $i=1, \ldots, n-1$ (with at least one strict inequality), and
\item $b_n \leq b_{n+2}$ and $b_{n-1} \leq b_{n+1}$ for $i=n$ (with at least one strict inequality).
\end{itemize}
If $i \neq n$ the simple reflection $s_i$ exchanges the variables $t_i$ and $t_{i+1}$ and fixes the other variables; the simple reflection $s_n$ sends $t_n$ to $-t_n$ and $t_{n-1}$ to $-t_{n-1}$, and fixes the other variables. Remark \ref{remark: covering relations determine graph} now determines the moment graph.

\section{A $W$-action on $G/P$} \label{W action section}

In this section we define an action of the Weyl group $W$ on $H^*_T(G/P)$ and give an explicit formula for the action of simple reflections on the basis of equivariant Schubert classes.

Recall that $W$ acts on $\mathfrak{t}^*$ by reflections, with the action of $w \in W$ on the root $\alpha$ denoted $w(\alpha)$.  This action extends naturally to a $W$-action on $S(\mathfrak{t}^*)$; each $w \in W$ can be considered as a $\C$-algebra homomorphism of $S(\mathfrak{t}^*)$.

\begin{lemma}
For each $w \in W$ and $p  : W/W_J \rightarrow S(\alpha_1, \ldots, \alpha_n)$, the element $w \cdot p : W/W_J \rightarrow  S(\alpha_1, \ldots, \alpha_n)$ is defined by the condition
\[ \left( w \cdot p\right)([v]) = w p(w^{-1}[v]).\] 
This makes each $w \in W$ a well-defined twisted $S(\mathfrak{t}^*)$-module homomorphism on $H^*_T(G/P)$, namely if $c \in S(\mathfrak{t}^*)$ and $p \in H^*_T(G/P)$ then $w \cdot (cp) = (w(c)) ( w \cdot p)$.
\end{lemma}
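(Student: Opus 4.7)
The plan is to verify three assertions: (i) the formula $(w \cdot p)([v]) = w\, p(w^{-1}[v])$ is well-defined on cosets $W/W_J$; (ii) the resulting function $w \cdot p$ satisfies the GKM condition on every edge of the moment graph of $G/P$, so that $w \cdot p \in H^*_T(G/P)$; and (iii) the twisted module identity $w \cdot (cp) = w(c)\,(w \cdot p)$ holds.

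For (i), $W$ acts on $W/W_J$ by left multiplication because $W_J$ multiplies on the right: if $v_1 W_J = v_2 W_J$ then $w^{-1}v_1 W_J = w^{-1}v_2 W_J$. Thus $w^{-1}[v]$ makes sense as an element of $W/W_J$, and then applying $w \in W$ to $p(w^{-1}[v]) \in S(\mathfrak{t}^*)$ via the natural algebra homomorphism $w : S(\mathfrak{t}^*) \to S(\mathfrak{t}^*)$ produces an unambiguous element of $S(\mathfrak{t}^*)$.

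For (ii), I invoke Lemma \ref{graph automorphisms}, which says that left multiplication by $w^{-1}$ is a graph automorphism of the moment graph of $G/P$. Concretely, an edge $[v_1] \stackrel{\alpha}{\leftrightarrow} [v_2]$ with $v_2 = s_\alpha v_1$ is carried to the edge between $[w^{-1}v_1]$ and $[w^{-1}s_\alpha v_1] = [s_{w^{-1}(\alpha)} w^{-1}v_1]$, whose label is $w^{-1}(\alpha)$ (up to a sign, which is irrelevant for the principal ideal it generates). Since $p \in H^*_T(G/P)$, the GKM condition gives
\[ p(w^{-1}[v_1]) - p(w^{-1}[v_2]) = f \cdot w^{-1}(\alpha) \]
for some $f \in S(\mathfrak{t}^*)$. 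Applying $w$ and using that it acts by algebra homomorphisms yields
\[ (w \cdot p)([v_1]) - (w \cdot p)([v_2]) = w(f) \cdot w(w^{-1}(\alpha)) = w(f) \cdot \alpha \in \ideal{\alpha}, \]
which is the GKM condition on the original edge.

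For (iii), I compute directly, using that $w$ is a ring homomorphism of $S(\mathfrak{t}^*)$:
\[ (w \cdot (cp))([v]) = w\bigl( (cp)(w^{-1}[v]) \bigr) = w\bigl(c \cdot p(w^{-1}[v])\bigr) = w(c) \cdot w\bigl(p(w^{-1}[v])\bigr) = w(c) \cdot (w \cdot p)([v]). \]
The main technical point is (ii): one has to identify the label on the image edge under the graph automorphism, and to observe that the sign issue (whether $w^{-1}(\alpha)$ is a positive or negative root) is immaterial because $\ideal{w^{-1}(\alpha)} = \ideal{-w^{-1}(\alpha)}$. Once Lemma \ref{graph automorphisms} is in hand, however, this step is routine.
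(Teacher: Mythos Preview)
Your proof is correct and follows essentially the same route as the paper: both use Lemma \ref{graph automorphisms} to transport edges and their labels under $w^{-1}$, then apply the GKM condition for $p$ and push forward by the ring homomorphism $w$ to recover divisibility by $\alpha$; the twisted module identity is checked the same way in both. The only minor difference is that the paper also records the group-action compatibility $(w_1w_2)\cdot p = w_1\cdot(w_2\cdot p)$, while you instead spell out why $w^{-1}[v]$ is well-defined on cosets; neither addition is essential to the lemma as stated.
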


\begin{proof}
The group action of $W$ on functions $W/W_J \rightarrow S(\alpha_1, \ldots, \alpha_n)$ is well-defined because 
\[(w_1 \cdot (w_2 \cdot p))([v]) = 
w_1(w_2 \cdot p)(w_1^{-1}[v]) = w_1w_2p(w_2^{-1}w_1^{-1}[v]) = \left((w_1w_2) \cdot p \right)([v]).\]

We confirm that $W$ acts on $H^*_T(G/P)$, namely that if $p$ is a class in $H^*_T(G/P)$ then $w \cdot p$ is also in $H^*_T(G/P)$.  To do this, we check that 
\[(w \cdot p)([s_{\alpha}v]) - (w \cdot p)([v]) \in \ideal{\alpha}\] 
for all vertices $[v]$ and edges $[s_{\alpha}v] \mapsto [v]$. The identity 
\[ w^{-1}[s_{\alpha}v] = [(w^{-1}s_{\alpha}w)w^{-1}v] = [s_{w^{-1}(\alpha)}w^{-1}v]\]  
implies that
\[(w \cdot p)([s_{\alpha}v])- (w \cdot p)([v]) = w p ([s_{w^{-1}(\alpha)}w^{-1}v]) - wp([w^{-1}v]).\] 
Lemma \ref{graph automorphisms} proved that each $w$ is a graph automorphism on the moment graph of $G/P$.  Since $[s_{\alpha}v] \mapsto [v]$ is an edge in the moment graph of $G/P$, we conclude that $[s_{w^{-1}(\alpha)}w^{-1}v] \mapsto [w^{-1}v]$ is also an edge, and is labeled $w^{-1}(\alpha)$.  Thus 
\[wp ([s_{w^{-1}(\alpha)}w^{-1}v]) - wp([w^{-1}v]) \in \ideal{\alpha},\]  
namely $w \cdot p \in H^*_T(G/P)$.

Finally, we show each $w \in W$ acts as a twisted $S(\mathfrak{t}^*)$-module homomorphism.  Addition in $H^*_T(G/P)$ is defined coordinate-wise, so for each $w \in W$ and $p_1, p_2 \in H^*_T(G/P)$ we have $w \cdot (p_1 + p_2) = w \cdot p_1 + w \cdot p_2$.  If $p \in H^*_T(G/P)$ and $c \in S(\mathfrak{t}^*)$ then for each $[v] \in W/W_J$ we have
\[ (w \cdot (cp))[v] = w(cp)([w^{-1}v]) = w(c) ((w \cdot p)[v]).\]
So the action of $w \in W$ twists the $S(\mathfrak{t}^*)$-action on $H^*_T(G/P)$. \end{proof}

This action can be described explicitly for simple reflections $s_i$.  If we consider $p \in H^*_T(G/P)$ as a tuple of polynomials, one polynomial for each vertex of the moment graph, then the action of $s_i$ permutes the indices in each polynomial by $s_i$ and exchanges polynomials on either side of an edge labeled $\alpha_i$.   Figure \ref{action example} demonstrates this, using the same notation as Figure \ref{p2 classes}.  
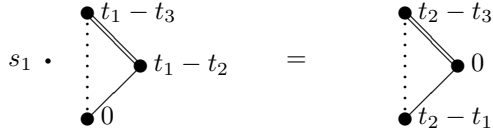
\begin{figure}[h]
\begin{picture}(350,40)(0,-20)
\put(70,0){$s_1$}
\put(85,2){\circle*{2}}
\put(175,0){$=$}

\put(100,-20){\circle*{5}}
\put(100,20){\circle*{5}}
\put(120,0){\circle*{5}}
\multiput(100,-20)(0,4){10}{\circle*{1}}
\put(100,-20){\line(1,1){20}}
\put(120,-1){\line(-1,1){20}}
\put(120,1){\line(-1,1){20}}

\put(105,-22){$0$}
\put(125,-2){$t_1-t_2$}
\put(105,18){$t_1-t_3$}

\put(220,-20){\circle*{5}}
\put(220,20){\circle*{5}}
\put(240,0){\circle*{5}}
\multiput(220,-20)(0,4){10}{\circle*{1}}
\put(220,-20){\line(1,1){20}}
\put(240,-1){\line(-1,1){20}}
\put(240,1){\line(-1,1){20}}

\put(225,-22){$t_2 - t_1$}
\put(245,-2){$0$}
\put(225,18){$t_2-t_3$}
\end{picture}
\caption{The action of $s_1$ on a Schubert class in $H^*_T(\mathbb{P}^2)$} \label{action example}
\end{figure}

\subsection{A formula for the action of simple reflections}
We give an explicit formula for the action of a simple reflection on the equivariant Schubert basis
of $H^*_T(G/P)$.  This formula is the basis for our divided difference operators.  

We begin with a short corollary of the results on $P$-inversions, included here for completeness.  (If we use specific representatives $w \in W^J$, this corollary follows from results on $G/B$  \cite[Proposition 4.7]{T}.)  Recall that Lemma \ref{U_w^P agrees on cosets} implies that for each $[w] \in W/W_J$, the set $\Inv{[w]}^P$ is well-defined by the rule $\Inv{[w]}^P = \Inv{w}^P$.

\begin{corollary} \label{first entry}
If $p_{[w]}$ is a flow-up class  in $H^*_T(G/P)$ corresponding to $[w] \in W/W_J$ and $s_i$ is a simple reflection such that $[s_iw] \mapsto [w]$ then
\[p_{[w]}([s_iw]) = \prod_{\alpha \in \Inv{[w]}^P} s_i(\alpha)= s_ip_{[w]}([w]).\]
\end{corollary}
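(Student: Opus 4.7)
The plan is to peel off the second equality first, since it is immediate: $s_i$ acts on $S(\mathfrak{t}^*)$ as a ring automorphism, so $s_ip_{[w]}([w]) = s_i\!\left(\prod_{\alpha \in \Inv{[w]}^P}\alpha\right) = \prod_{\alpha \in \Inv{[w]}^P} s_i(\alpha)$ by the defining localization of a flow-up class. All the work then goes into showing $p_{[w]}([s_iw])$ equals this product, which I will do by combining the GKM conditions at $[s_iw]$ with the support and degree constraints on $p_{[w]}$.

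The first step is to enumerate the edges out of $[s_iw]$ and determine which targets lie outside the support of $p_{[w]}$. Because $[s_iw] \mapsto [w]$, Lemma \ref{inversions} gives $\Inv{[s_iw]}^P = s_i\Inv{[w]}^P \cup \{\alpha_i\}$; in particular $\alpha_i \notin \Inv{[w]}^P$, and $s_i$ maps $\Inv{[w]}^P$ bijectively to a set of positive roots. For each $\alpha \in \Inv{[w]}^P$ the edge out of $[s_iw]$ labeled $s_i(\alpha)$ has target $[s_is_\alpha w]$. I will check $[s_is_\alpha w] \not\geq_P [w]$ by a length argument: $\ell_P([s_\alpha w]) = \ell_P([w])-1$ and Corollary \ref{step by one} gives $\ell_P([s_is_\alpha w]) \leq \ell_P([w])$, with equality only if $[s_is_\alpha w]=[w]$, i.e.\ only if $[s_\alpha w]=[s_iw]$, which is impossible because $\ell_P([s_\alpha w]) < \ell_P([w]) < \ell_P([s_iw])$. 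Equal-length but distinct vertices are incomparable in the poset on $W/W_J$, so $[s_is_\alpha w] \not\geq_P [w]$, and therefore $p_{[w]}([s_is_\alpha w])=0$.

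From the GKM condition on each such edge I get $p_{[w]}([s_iw]) \in \ideal{s_i(\alpha)}$ for every $\alpha \in \Inv{[w]}^P$. The roots $\{s_i(\alpha): \alpha \in \Inv{[w]}^P\}$ are pairwise distinct positive roots, hence pairwise coprime linear forms in $S(\mathfrak{t}^*)$, so their product divides $p_{[w]}([s_iw])$. Comparing degrees, $p_{[w]}([s_iw])$ is homogeneous of degree $\ell_P([w]) = |\Inv{[w]}^P|$, which is exactly the degree of $\prod_\alpha s_i(\alpha)$, so $p_{[w]}([s_iw]) = c \prod_{\alpha \in \Inv{[w]}^P} s_i(\alpha)$ for some scalar $c \in \C$.

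Finally I will pin down $c$ using the remaining outgoing edge from $[s_iw]$, the one labeled $\alpha_i$ going to $[w]$. The GKM condition gives $c\prod s_i(\alpha) - \prod \alpha \in \ideal{\alpha_i}$; since $s_i(\alpha) \equiv \alpha \pmod{\alpha_i}$ for every root $\alpha$, this reduces to $(c-1)\prod_{\alpha \in \Inv{[w]}^P} \alpha \in \ideal{\alpha_i}$. Because $\alpha_i \notin \Inv{[w]}^P$, no factor of $\prod \alpha$ is a scalar multiple of $\alpha_i$, so $\prod \alpha$ is a nonzero class in $S(\mathfrak{t}^*)/\ideal{\alpha_i}$ and therefore $c=1$. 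The main obstacle is the support argument in paragraph two, verifying that none of the edges labeled $s_i(\alpha)$ out of $[s_iw]$ land back inside the support of $p_{[w]}$; everything else is bookkeeping with GKM relations and root-space arithmetic.
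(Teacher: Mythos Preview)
Your argument is correct and follows essentially the same route as the paper: use Lemma~\ref{inversions} to list the edges out of $[s_iw]$, observe that every target other than $[w]$ lies outside the support of $p_{[w]}$, apply the GKM conditions plus a degree count to get $p_{[w]}([s_iw]) = c\prod_{\alpha}s_i(\alpha)$, and then use the $\alpha_i$-edge together with $\alpha_i\notin\Inv{[w]}^P$ to force $c=1$. One small inaccuracy: for a general reflection $s_\alpha$ with $\alpha\in\Inv{[w]}^P$ you only have $\ell_P([s_\alpha w])\leq \ell_P([w])-1$, not equality, but your subsequent length argument goes through unchanged with the inequality.
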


\begin{proof}
Let 
\[q =  \prod_{\alpha \in \Inv{[s_iw]}^P, \alpha \neq \alpha_i} \alpha.\]
Since $\ell_P([s_iw]) = \ell_P([w])+1$ and $p_{[w]}$ is a flow-up class, every vertex $[v] \neq [w]$ with an edge $[s_iw] \mapsto [v]$ satisfies $p_{[w]}([v])=0$.  By comparing degrees, we conclude that $p_{[w]}([s_iw])$ is a scalar multiple $cq$ for some $c \in \C$.  

Lemma \ref{inversions} shows that $s_iq = p_{[w]}([w])$ and that neither $q$ nor $p_{[w]}([w])$ is divisible by $\alpha_i$.  It follows that $q - p_{[w]}([w]) \in \ideal{\alpha_i}$ and that $c=1$ is the the only scalar satisfying $cq - p_{[w]}([w]) \in \ideal{\alpha_i}$.  The GKM conditions imply $p_{[w]}([s_iw])=q$.
\end{proof}

Next we generalize to $G/P$ a classical result for $G/B$ \cite[Lemma 2.4]{BGG}.

\begin{lemma} \label{diamonds}
Suppose that $[s_iw]>_P[w]$.  If there is an edge $[s_{\alpha}w] \mapsto [w]$ with $\ell_P([s_{\alpha}w])= \ell_P([w])+1$  in the moment graph of $G/P$ then the configuration of Figure \ref{diamond picture} is in the moment graph.
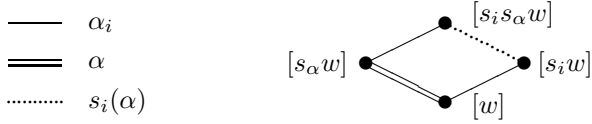
\begin{figure}[h]
\begin{picture}(350,40)(0,-20)
\put(175,15){\circle*{5}}
\put(145,0){\circle*{5}}
\put(205,0){\circle*{5}}
\put(175,-15){\circle*{5}}

\put(175,15){\line(-2,-1){30}}
\multiput(175,15)(2,-1){15}{\circle*{1}}
\put(175,-16){\line(-2,1){30}}
\put(175,-14){\line(-2,1){30}}
\put(175,-15){\line(2,1){30}}

\put(185,-19){$[w]$}
\put(210,-3){$[s_iw]$}
\put(185,16){$[s_is_{\alpha}w]$}
\put(115,-3){$[s_{\alpha}w]$}

\put(10,15){\line(1,0){20}}
\put(10,1){\line(1,0){20}}
\put(10,-1){\line(1,0){20}}
\multiput(10,-15)(2,0){11}{\circle*{1}}

\put(40,13){$\alpha_i$}
\put(40,-2){$\alpha$}
\put(40,-17){$s_i(\alpha)$}
\end{picture}
\caption{Schematic of diamond shape (angles not to scale)} \label{diamond picture}
\end{figure}
\end{lemma}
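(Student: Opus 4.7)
The main tool is Lemma \ref{graph automorphisms}: left-multiplication by any Weyl group element is a graph automorphism of the moment graph of $G/P$. I first dispose of the degenerate case $\alpha = \alpha_i$, in which $[s_\alpha w] = [s_iw]$ and $[s_is_\alpha w] = [w]$, so the purported diamond collapses onto the single edge $[s_iw] \mapsto [w]$ already present by hypothesis. Henceforth assume $\alpha \neq \alpha_i$, so that $s_i(\alpha)$ is a positive root.

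For the top-right edge of Figure \ref{diamond picture}, I would apply the graph automorphism $s_i\cdot$ to the given edge $[s_\alpha w] \stackrel{\alpha}{\mapsto} [w]$. Using the identity $s_i s_\alpha = s_{s_i(\alpha)} s_i$, the image is an edge between $[s_is_\alpha w] = [s_{s_i(\alpha)} s_iw]$ and $[s_iw]$; by Theorem \ref{moment graph description} its label is the positive root $s_i(\alpha)$, and since graph automorphisms preserve edge directions the edge is oriented $[s_is_\alpha w] \mapsto [s_iw]$.

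To produce the fourth vertex and the top-left edge, I must first verify $[s_is_\alpha w] \neq [s_\alpha w]$; this is the main obstacle. I would argue by contradiction: if these cosets were equal, the edge just constructed would connect $[s_\alpha w]$ to $[s_iw]$, yet both of these cosets have $\ell_P$-length $\ell_P([w]) + 1$, whereas any two cosets joined by a moment-graph edge are Bruhat-comparable in $W/W_J$ and hence have distinct $\ell_P$-lengths. Once distinctness is established, $[s_is_\alpha w]$ and $[s_\alpha w]$ differ by left multiplication by the simple reflection $s_i$, so Theorem \ref{moment graph description} supplies the required edge with label $\alpha_i$. Its direction is forced by combining Corollary \ref{step by one} (which gives $|\ell_P([s_is_\alpha w]) - \ell_P([s_\alpha w])| \leq 1$) with the transitive chain $[s_is_\alpha w] >_P [s_iw] >_P [w]$ from the preceding paragraph: together these yield $\ell_P([s_is_\alpha w]) = \ell_P([s_\alpha w]) + 1$, whence the orientation $[s_is_\alpha w] \mapsto [s_\alpha w]$. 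The four vertices and four edges then assemble into the diamond of Figure \ref{diamond picture}.
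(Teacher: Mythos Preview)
Your argument is correct and mirrors the paper's: transport the edge $[s_\alpha w]\mapsto[w]$ via the left $s_i$-action to obtain $[s_is_\alpha w]\mapsto[s_iw]$, then squeeze $\ell_P([s_is_\alpha w])$ between the bound from Corollary~\ref{step by one} and the resulting chain through $[s_iw]$. One caution on attribution: Lemma~\ref{graph automorphisms} does not assert that edge \emph{directions} are preserved (indeed $s_i$ reverses any $\alpha_i$-edge), so the orientation-preservation you invoke for the edge labeled $\alpha\neq\alpha_i$ is really the observation in the first paragraph of the proof of Lemma~\ref{inversions}---which is exactly what the paper cites at this step.
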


\begin{proof}
There is an edge $[s_{\alpha}w] \mapsto [w]$ so the root $\alpha$ is in $\Inv{[s_{\alpha}w]}^P$.  Lemma \ref{inversions} shows that $s_i \left( \Inv{[s_{\alpha}w]}^P - \{\alpha_i\}\right) \subseteq \Inv{[s_is_{\alpha}w]}^P$ regardless of whether $[s_is_{\alpha}w] = [s_{\alpha}w]$ or, if not, the direction of the edge between them.  In particular $s_i(\alpha) \in \Inv{[s_is_{\alpha}w]}^P$. The reflection corresponding to $s_i(\alpha)$ is $s_is_{\alpha}s_i$.  Since $(s_is_{\alpha}s_i)[s_i s_{\alpha}w] = [s_iw]$, we conclude $[s_is_{\alpha}w] \mapsto [s_iw]$.  In particular $\ell_P([s_is_{\alpha}w]) \geq \ell_P([s_iw]) + 1$.  Lemma \ref{step by one} says $\ell_P([s_is_{\alpha}w]) \leq \ell_P([s_{\alpha}w])+1$.  Since $\ell_P([s_{\alpha}w]) = \ell_P([s_iw]) = \ell_P([w])+1$, we see that $\ell_P([s_is_{\alpha}w]) = \ell_P([w])+2$ and the moment graph is as shown in Figure \ref{diamond picture}.
\end{proof}

We will now prove the main result of this section, an explicit formula for $s_i \cdot p_{[w]}$.

\begin{lemma} \label{formula for action}
Let $p_{[w]}$ be a flow-up class for $[w]$ in $H^*_T(G/P)$ and let $s_i$ be a simple reflection.  Then
\[s_i \cdot p_{[w]} = \left\{ \begin{array}{ll} p_{[w]} & \textup{ if } [s_iw] \geq [w] \textup{ and } \\
	p_{[w]}-\alpha_ip_{[s_iw]} & \textup{ if } [s_iw]<_P [w]. \end{array} \right.\]
\end{lemma}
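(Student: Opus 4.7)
My plan is to verify the formula by computing localizations $(s_i \cdot p_{[w]})([v]) = s_i(p_{[w]}([s_iv]))$ at each coset $[v] \in W/W_J$ and comparing with the right-hand side. Since the flow-up classes form a basis of $H^*_T(G/P)$ (Section~\ref{flow-up basis subsection}), any class is uniquely determined by its localizations, and a flow-up class is uniquely characterized by its degree, its support, and its value at the leading vertex.

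For Case 1 (where $[s_iw] \geq_P [w]$), I would show that $s_i \cdot p_{[w]}$ satisfies all three defining properties of $p_{[w]}$. Homogeneity of degree $\ell_P([w])$ is immediate since $s_i$ acts as a degree-preserving $\C$-algebra homomorphism on $S(\mathfrak{t}^*)$. For the leading value at $[w]$, I would compute $(s_i \cdot p_{[w]})([w]) = s_i(p_{[w]}([s_iw]))$. When $[s_iw] = [w]$, Lemma~\ref{inversions}(1) gives $s_i \Inv{[w]}^P = \Inv{[w]}^P$, so the product $\prod_{\alpha \in \Inv{[w]}^P} \alpha$ is $s_i$-invariant and the value matches $p_{[w]}([w])$. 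When $[s_iw] >_P [w]$, Corollary~\ref{first entry} yields $p_{[w]}([s_iw]) = s_i(p_{[w]}([w]))$, so applying $s_i$ again returns $p_{[w]}([w])$. For support, I need $[s_iv] \geq_P [w] \implies [v] \geq_P [w]$ under the hypothesis $[s_iw] \geq_P [w]$; this follows from the standard Bruhat-order lifting property applied to the minimal-length representatives from Section~\ref{min rep}.

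For Case 2 (where $[s_iw] <_P [w]$), I would set $q = s_i \cdot p_{[w]} - p_{[w]} + \alpha_i \, p_{[s_iw]}$ and argue $q = 0$. The leading-vertex computation at $[s_iw]$ uses Lemma~\ref{inversions}(2), now applied with $s_iw$ playing the role of $w$ so that $[w] >_P [s_iw]$ is the ``positive" direction: this gives $s_i \Inv{[w]}^P = \Inv{[s_iw]}^P \cup \{-\alpha_i\}$, from which $s_i(p_{[w]}([w])) = -\alpha_i \, p_{[s_iw]}([s_iw])$, canceling against the $\alpha_i \, p_{[s_iw]}([s_iw])$ term since $p_{[w]}([s_iw]) = 0$. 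A parallel support analysis then shows $q$ is supported strictly above $[s_iw]$, and a degree comparison with the flow-up basis expansion (each coefficient $c_{[u]}$ must lie in degree $\ell_P([w]) - \ell_P([u]) \geq 0$) combined with the vanishing at $[s_iw]$ forces every coefficient of $q$ to be zero.

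I expect the main obstacle to be the support argument in both cases: translating the Bruhat-order lifting property for $W$ to the quotient $W/W_J$ through minimal-length representatives, and verifying that left multiplication by $s_i$ interacts with the Bruhat order on $W/W_J$ in exactly the way needed. By contrast, the leading-vertex calculations are straightforward consequences of Lemma~\ref{inversions} and Corollary~\ref{first entry}, together with the explicit factorization $p_{[w]}([w]) = \prod_{\alpha \in \Inv{[w]}^P} \alpha$.
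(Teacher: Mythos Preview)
Your Case~1 argument is correct and somewhat cleaner than the paper's: the paper expands $s_i\cdot p_{[w]}$ in the flow-up basis and chases minimal nonzero coefficients, whereas you verify directly that $s_i\cdot p_{[w]}$ has the support, degree, and leading value characterizing $p_{[w]}$. The support step does reduce to the Bruhat lifting property on $W/W_J$, which follows from the one on $W$ via Corollary~\ref{simple edges} and the minimal-representative machinery of Section~\ref{min rep}.

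Your Case~2 argument, however, has a real gap. The class $q = s_i\cdot p_{[w]} - p_{[w]} + \alpha_i\, p_{[s_iw]}$ is homogeneous of degree $\ell_P([w]) = \ell_P([s_iw]) + 1$, not $\ell_P([s_iw])$. Your support analysis correctly places $q$ on $\{[v] >_P [s_iw]\}$, but any $[u]$ with $\ell_P([u]) = \ell_P([w])$ and $[u] >_P [s_iw]$ is still allowed: the coefficient $c_{[u]}$ then has degree $\ell_P([w]) - \ell_P([u]) = 0$ and could be a nonzero constant. The single vanishing $q([s_iw])=0$ does not rule these out. Concretely, you must still show $s_i\bigl(p_{[w]}([s_iu])\bigr) = -\alpha_i\, p_{[s_iw]}([u])$ for every such $[u]\neq [w]$, and this is precisely the step the paper handles with the diamond configuration of Lemma~\ref{diamonds} and a GKM-divisibility comparison of $p_{[s_iw]}([u])$ with $p_{[w]}([s_iu])$.

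A short repair that preserves your strategy: first observe that $p_{[w]} - s_i\cdot p_{[w]}$ is divisible by $\alpha_i$ at every vertex (from the GKM relation along $\alpha_i$-edges together with $(1-s_i)f\in(\alpha_i)$), and that the quotient $q' = (p_{[w]} - s_i\cdot p_{[w]})/\alpha_i$ again satisfies the GKM conditions. Then $q'$ has degree $\ell_P([s_iw])$, is supported on $\{[v]\geq_P[s_iw]\}$ by your same lifting argument, and your leading-value computation gives $q'([s_iw]) = p_{[s_iw]}([s_iw])$. Now the degree argument \emph{does} close: $q'-p_{[s_iw]}$ is supported strictly above $[s_iw]$ with degree $\ell_P([s_iw])$, so every coefficient has negative degree and must vanish.
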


\begin{proof}  We decompose $s_i \cdot p_{[w]} = \sum c_{[v]}p_{[v]}$ in terms of the basis of flow-up classes.  If $\ell_P([v])> \ell_P([w])$ then $c_{[v]} = 0$ because $s_i \cdot p_{[w]}$ is a homogeneous class of degree $\ell_P([w])$ and the coefficients $c_{[v]}$ have degree at least zero.  

Let $[u]$ be a minimal element of $W/W_J$ with $c_{[u]} \neq 0$.  We will evaluate the expression $s_i \cdot p_{[w]} = \sum c_{[v]}p_{[v]}$ at $[u]$.  Each flow-up class $p_{[v]}$ is supported only on $[u]$ with $[u] \geq_P [v]$, so 
\begin{equation} \label{eqn: minimal coeff}
(s_i \cdot p_{[w]})([u]) = \sum c_{[v]}p_{[v]}([u]) = c_{[u]} p_{[u]}([u])
\end{equation}
by the minimality hypothesis on $[u]$.  The reflection $s_i$ changes the length of a fixed point by at most one.  If $\ell_P([v]) < \ell_P([w])$ then  $\ell_P([s_iv]) \leq \ell_P([w])$ and in particular $[s_i v ] \not \geq [w]$ unless $[s_i v] = [w]$.  Hence $(s_i \cdot p_{[w]})([u]) = s_i p_{[w]}([s_iu]) = 0$ unless $u = s_iw$.
We conclude that $c_{[v]}=0$ if $\ell_P([v]) < \ell_P([w])$ except perhaps when both $[u]=[s_iw]$ and $[u] < [w]$.

Thus the only possible nonzero $c_{[v]}$ are $[v]$ with $\ell_P([v]) = \ell_P([w])$ and, if $[s_iw]<[w]$, the coefficient $c_{[s_iw]}$.

Consider the case when $[s_iw] \geq [w]$ first.  Let $[u]$ be a minimal element of $W/W_J$ with $c_{[u]} \neq 0$.  Equation \eqref{eqn: minimal coeff} implies that $(s_i \cdot p_{[w]})([u]) \neq 0$, and in particular the moment graph has an edge $[s_iu] \stackrel{\alpha}{\mapsto} [w]$ for some $\alpha$.  If $[u] \neq [w]$ then $\alpha \neq \alpha_i$.  Then $\alpha \in \Inv{[s_iu]}^P$ implies that $s_i \alpha \in \Inv{[u]}^P$ by Lemma \ref{inversions}.  The reflection $s_{s_i(\alpha)} = s_i s_{\alpha} s_i$ so 
\[[s_{s_i(\alpha)}u] = [s_is_{\alpha}s_iu] = [s_iw].\]
Hence the moment graph has an edge $[u] \mapsto [s_iw]$ and so $\ell_P([u]) > \ell_P([s_iw])$, which by hypothesis is at least $\ell_P([w])$.  This contradiction shows that $[u] = [w]$ is the only minimal element of $W/W_J$ with $c_{[u]} \neq 0$, and so also the only element of length $\ell_P([w])$ with $c_{[u]} \neq 0$.  Moreover Equation \eqref{eqn: minimal coeff} implies that
\[c_{[w]} = \frac{(s_i \cdot p_{[w]})([w])}{p_{[w]}([w])} = \frac{s_i p_{[w]}([s_iw])}{p_{[w]}([w])}.\]
If $[s_iw] \mapsto [w]$ then Corollary \ref{first entry} shows $c_{[w]} = 1$.  If $[s_iw] = [w]$ then Lemma \ref{inversions} shows $c_{[w]} = 1$. 

Now suppose $[s_iw]<_P[w]$.  All fixed points $[v]$ with $[v] < [s_iw]$ have $c_{[v]}=0$ so Equation \eqref{eqn: minimal coeff} applies to $u=s_iw$.  Together with Corollary \ref{first entry}, it implies that
\[c_{[s_iw]}=\frac{s_ip_{[w]}([w])}{p_{[s_iw]}([s_iw])} = -\alpha_i.\]
We will now identify $c_{[u]}$ for fixed points $[u]$ of length $\ell_P([w])$ by evaluating 
\[s_i \cdot p_{[w]} = -\alpha_ip_{[s_iw]} + \sum c_{[v]}p_{[v]}\] 
at various fixed points.  Since the sum is taken over $[v]$ with $\ell_P([v]) = \ell_P([w])$, the equation obtained by evaluation at $[u]$ is
\begin{equation} \label{eqn: other eval}
(s_i \cdot p_{[w]})([u]) = -\alpha_ip_{[s_iw]}([u])+ c_{[u]}p_{[u]}([u]).
\end{equation}  

We begin with $[u] = [w]$.  Corollary \ref{first entry} says $-\alpha_i p_{[s_iw]}([w]) = -\alpha_i s_i p_{[s_iw]}([s_iw])$, which is $-p_{[w]}([w])$ by Lemma \ref{inversions}.  Since $(s_i \cdot p_{[w]})([w]) = 0$ we obtain $c_{[w]} = 1$.

Next let $[u] \neq [w]$ be a fixed point with $\ell_P([u]) = \ell_P([w])$.  Equation \eqref{eqn: other eval} shows that $c_{[u]} =0$ unless $p_{[s_iw]}([u]) \neq 0$ or $p_{[w]}([s_iu]) \neq 0$.  The former implies that there is an edge $[u] \mapsto [s_iw]$ while the latter implies that there is an edge $[s_iu] \mapsto [w]$ by length considerations and the definition of flow-up classes.  Lemma \ref{diamonds} shows that if $[u] \mapsto [s_iw]$ then there is an edge $[s_iu] \mapsto [w]$.  Conversely, if $[s_iu] \mapsto [w]$ is an edge, say labeled $\alpha$, then $[u] = [(s_i s_{\alpha}s_i)s_iw]$ and so $[u] \mapsto [s_iw]$ is an edge labeled $s_i(\alpha)$.  Hence $c_{[u]} = 0$ unless the following picture is in the moment graph:
\begin{picture}(350,50)(0,-25)
\put(175,15){\circle*{5}}
\put(145,0){\circle*{5}}
\put(205,0){\circle*{5}}
\put(175,-15){\circle*{5}}

\put(175,15){\line(-2,-1){30}}
\multiput(175,15)(2,-1){15}{\circle*{1}}
\put(175,-16){\line(-2,1){30}}
\put(175,-14){\line(-2,1){30}}
\put(175,-15){\line(2,1){30}}

\put(185,-19){$[s_iw]$}
\put(210,-3){$[w]$}
\put(185,16){$[s_{\alpha}w]$}
\put(80,-3){$[u]=[s_is_{\alpha}w]$}
\end{picture}

Lemma \ref{inversions} showed that if the downward pointing edges at $[s_iw]$ and $[u]$ are labeled by $\Inv{[s_iw]}^P$ and $\Inv{[u]}^P$ respectively, then the downward pointing edges at $[w]$ and $[s_iu]$ are labeled $s_i\Inv{[s_iw]}^P \cup \{\alpha_i\}$ and $s_i \Inv{[u]}^P \cup \{\alpha_i\}$, respectively.
 Let 
\[q = \prod_{\scriptsize \begin{array}{c} \alpha \in \Inv{[u]}^P s.t. \\
\hspace{0em} [s_{\alpha}u] \neq [s_iw] \end{array}} \alpha.\]
Each edge $[u] \mapsto [s_{\alpha}u] \neq [s_iw]$ points to a vertex with $p_{[s_iw]}([s_{\alpha}u])=0$ by definition of flow-up classes.  Thus the localization $p_{[s_iw]}([u])$ is in the ideal $\ideal{q}$.  A similar argument shows that $p_{[w]}([s_iu]) \in \ideal{(\alpha_i)(s_iq)}$. 

Comparing degrees, we see that $p_{[s_iw]}([v])$ is in fact a scalar multiple of $q$, say $cq$ for $c \in \C$.  The scalar $c$ is determined by the GKM condition $cq - p_{[s_iw]}([s_iw]) \in \ideal{s_i(\alpha)}$ because $s_i(\alpha)$ is relatively prime to $q$.  (Indeed, the edges at a fixed vertex in the moment graph of a GKM space are labeled by positive roots, none of which are multiples or repetitions of each other.)
The same scalar $c$ satisfies the equation $c \alpha_i(s_iq) -  \alpha_is_ip_{[s_iw]}([s_iw]) \in \ideal{\alpha}$, which is the GKM condition for the edge $[s_iu] \mapsto [w]$ because $p_{[w]}([w]) = \alpha_is_ip_{[s_iw]}([s_iw])$.  No other scalar $c$ satisfies the equation because $\alpha$ is relatively prime to $\alpha_i(s_iq)$.  So $p_{[w]}([s_iu]) = \alpha_i s_i p_{[s_iw]}([u])$.  In other words $(s_i \cdot p_{[w]})([u]) = -\alpha_ip_{[s_iw]}([u])$.   Equation \eqref{eqn: other eval} implies that $c_{[u]}=0$. 
\end{proof}

We reformulate the previous result for later use.

\begin{corollary}\label{basic formula}
Let $p_{[w]} \in H^*_T(G/P)$ be the flow-up class corresponding to $[w] \in W/W_J$ and let $s_i$ be a simple reflection.  Then for each $[v] \in W/W_J$
\[p_{[w]}([v]) = \left\{ \begin{array}{ll} s_i(p_{[w]}([s_iv])) & \textup{ if } [s_iw] \geq [w] \textup{ and}\\
		s_i(p_{[w]}([s_iv])) + \alpha_ip_{[s_iw]}([v]) & \textup{ if } [s_iw] < [w].\end{array} \right.\]
\end{corollary}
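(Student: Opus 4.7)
The plan is to deduce Corollary \ref{basic formula} directly from Lemma \ref{formula for action} by evaluating both sides at an arbitrary fixed point $[v] \in W/W_J$ and using the explicit definition of the $W$-action on equivariant classes.

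First I would unpack the action. By the defining formula $(w \cdot p)([v]) = w \, p(w^{-1}[v])$ established at the start of Section \ref{W action section}, and since $s_i^{-1} = s_i$, we have for any $p \in H^*_T(G/P)$ and $[v] \in W/W_J$
\[
(s_i \cdot p)([v]) = s_i\bigl(p([s_i v])\bigr).
\]
Applying this with $p = p_{[w]}$ gives $(s_i \cdot p_{[w]})([v]) = s_i\bigl(p_{[w]}([s_iv])\bigr)$, which is precisely the ``twisted pullback'' appearing on the right-hand side of the corollary.

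Next I would split into the two cases and substitute Lemma \ref{formula for action} on the left. If $[s_iw] \geq_P [w]$, Lemma \ref{formula for action} states that $s_i \cdot p_{[w]} = p_{[w]}$; evaluating both sides at $[v]$ gives $p_{[w]}([v]) = s_i\bigl(p_{[w]}([s_iv])\bigr)$, which is the first case. If instead $[s_iw] <_P [w]$, the lemma gives $s_i \cdot p_{[w]} = p_{[w]} - \alpha_i p_{[s_iw]}$; evaluating at $[v]$ yields
\[
s_i\bigl(p_{[w]}([s_iv])\bigr) = p_{[w]}([v]) - \alpha_i\, p_{[s_iw]}([v]),
\]
and solving for $p_{[w]}([v])$ produces the second case.

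There is no genuine obstacle here: the corollary is purely a translation of the identity of Lemma \ref{formula for action} from an equality of equivariant classes into an equality of the polynomials at each fixed point, using only the definition of the $W$-action. The only thing to double-check is that the case distinction $[s_iw] \geq [w]$ versus $[s_iw] <_P [w]$ is exhaustive (which holds because Lemma \ref{inversions} and Corollary \ref{step by one} guarantee that either $[s_iw] = [w]$, $[s_iw] >_P [w]$, or $[s_iw] <_P [w]$, and the first two fall under the first branch of the lemma).
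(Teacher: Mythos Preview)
Your proof is correct and matches the paper's intent exactly: the paper simply states ``We reformulate the previous result for later use'' without giving a separate argument, since the corollary is obtained precisely by evaluating Lemma~\ref{formula for action} at an arbitrary fixed point $[v]$ and unwinding the definition of the $W$-action, which is what you do.
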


\section{A divided difference operator} \label{props of div diff section}

Bernstein-Gelfand-Gelfand and Demazure simultaneously defined divided difference operators in order to determine the relationship between two presentations of the cohomology ring of the flag variety: the Borel presentation (as a quotient of a polynomial ring by an ideal of symmetric polynomials), and the geometric presentation (in terms of the basis of Schubert classes) \cite{BGG}, \cite{De}.  We refer to their operators as BGG divided difference operators.  In this section we define a new divided difference operator on both $H^*_T(G/P)$ and on $H^*(G/P)$, for arbitrary complex semisimple linear algebraic groups $G$ and arbitrary parabolics $P$.  

We begin with a review of the key aspects of BGG divided difference operators, from our perspective.  The BGG divided difference operator is a degree-lowering rational operator on $S(\mathfrak{t}^*)$.  The formula for the action of the $i^{th}$ BGG divided difference operator $\partial_i$ on $f \in S(\mathfrak{t}^*)$ is
\[ \partial_i f = \frac{f - s_if}{\alpha_i}.\]
We will use two properties of the BGG divided difference operator.  First:
\begin{itemize}
\item The BGG divided difference operator satisfies the two nil-Coxeter relations. First, if $s_{i_1} \cdots s_{i_k}$ and $s_{j_1} \cdots s_{j_k}$ are two reduced factorizations for the same word, then $\partial_{i_1} \cdots \partial_{i_k} = \partial_{j_1} \cdots \partial_{j_k}$.  Second, $\partial_i^2 = 0$ for each $i$.
\end{itemize}
The second property describes how divided difference operators act on Schubert polynomials.  A Schubert polynomial corresponding to $w\in W$ is a representative $\mathfrak{S}_w \in S(\mathfrak{t}^*)$ of the Schubert class in $H^*(G/P)$ corresponding to $w$.  There is an extensive literature on how best to choose the representative, e.g. \cite{LS}, \cite{FK}.  For the purposes of this paper, any convention can be chosen as long as the following are satisfied:  
\begin{itemize}
\item If $\partial_i$ is a BGG divided difference operator and $\mathfrak{S}_w$ is a Schubert polynomial corresponding to $w\in W$ then
\[\partial_i (\mathfrak{S}_w) = \left\{ \begin{array}{ll} \mathfrak{S}_{ws_i} & \textup{ if } ws_i<w \textup{ and} \\ 0 & \textup{ otherwise.} \end{array} \right.\]
\item The Schubert polynomial $\mathfrak{S}_e$ is a positive integer.
\end{itemize}
The Schubert polynomials defined in \cite{BGG} satisfy these properties, as do others.

BGG operators satisfy a Leibniz formula, proven for type $A_n$ in several places \cite[Equation 2.2]{M} and generalized in the following lemma.

\begin{lemma} \label{Macdonald lemma}
For each $f,g \in S(\mathfrak{t}^*)$ and BGG divided difference operator $\partial_i$
\[\partial_i(fg) = \partial_i(f) g +(s_if) \partial_i(g).\]
\end{lemma}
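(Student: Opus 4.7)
The plan is to prove the identity by a direct computation that only uses the definition $\partial_i f = (f - s_i f)/\alpha_i$ and the fact, already implicit in the setup, that $s_i$ acts on $S(\mathfrak{t}^*)$ as a $\C$-algebra homomorphism. In particular $s_i(fg) = (s_i f)(s_i g)$.

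First I would expand the left-hand side using the definition to get
\[\partial_i(fg) = \frac{fg - s_i(fg)}{\alpha_i} = \frac{fg - (s_if)(s_ig)}{\alpha_i}.\]
Then the key algebraic step is to insert the telescoping term $(s_if)g$ into the numerator, which splits the expression into two pieces:
\[fg - (s_if)(s_ig) = \bigl(fg - (s_if)g\bigr) + \bigl((s_if)g - (s_if)(s_ig)\bigr) = (f - s_if)g + (s_if)(g - s_ig).\]
Dividing through by $\alpha_i$ and using the definition of $\partial_i$ on each piece yields exactly $\partial_i(f)g + (s_if)\partial_i(g)$.

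There is essentially no obstacle here: the identity is a purely formal consequence of multiplicativity of $s_i$ and the one-variable quotient definition of $\partial_i$. It does not use that $s_i$ is an involution, that $\alpha_i$ is a simple root, or any property of the Weyl group beyond the fact that $s_i$ is a ring homomorphism. If anything, the only thing worth commenting on is that $\partial_i(f), \partial_i(g) \in S(\mathfrak{t}^*)$ (not merely in the fraction field), which follows because $\alpha_i$ divides $f - s_if$ for any $f \in S(\mathfrak{t}^*)$; granting this, the computation above takes place entirely inside $S(\mathfrak{t}^*)$.
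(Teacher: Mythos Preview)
Your proof is correct and follows exactly the same approach as the paper: expand $\partial_i(fg)$ from the definition, insert and subtract the term $(s_if)g$ to telescope the numerator, and regroup into $\partial_i(f)g + (s_if)\partial_i(g)$. The paper's argument is identical but terser, omitting your remark about the computation staying inside $S(\mathfrak{t}^*)$.
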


\begin{proof}
Expand:
\[\begin{array}{ll} \partial_i(fg) &= \displaystyle \frac{fg-(s_if)(s_ig)}{\alpha_i} \\
&= \displaystyle \frac{fg - (s_if)g}{\alpha_i} + \frac{(s_if)g-(s_if)(s_ig)}{\alpha_i} = \partial_i(f)g+(s_if)\partial_i(g).\end{array}\]
\end{proof}

We now define a second collection of divided difference operators $\delta_i$ on the GKM presentation of the $S(\mathfrak{t}^*)$-module $H^*_T(G/P)$.  We use $\delta_i$ to distinguish the new divided difference operators from the BGG divided difference operators $\partial_i$, which will also be used in what follows.  At the end of this section, we discuss results for ordinary cohomology, as well as how $\partial_i$ and $\delta_i$ are related.  Mark Shimozono helpfully pointed out that the following result was also given by D.\ Peterson \cite{P}.

\begin{proposition} \label{well-defined, nil coxeter}
For each simple reflection $s_i \in W$ define a divided difference operator $\delta_i$ that acts
on the class $p \in H^*_T(G/P)$ as a $\C$-module homomorphism by
\[ \delta_i p = \frac{p-s_i \cdot p}{\alpha_i}\]
Each $\delta_i$ is well-defined and $\delta_i^2 = 0$.
It satisfies a ``Leibniz formula":
\begin{equation} \label{gen macdonald}
\delta_i (c p_{[v]}) =  \displaystyle (\partial_i c) p_{[v]} + (s_ic) \delta_i p_{[v]}
\end{equation}
where $\partial_i$ is the BGG divided difference operator on $S(\mathfrak{t}^*)$.
The action of $\delta_i$ on the basis of flow-up classes is given by
\begin{equation} \label{div diff formula}
\delta_i p_{[v]} = \left\{ \begin{array}{ll} p_{[s_iv]} & \textup{ if } [s_iv]<_P[v] \textup{ and } \\
0 & \textup{ otherwise}. \end{array} \right.\end{equation}
\end{proposition}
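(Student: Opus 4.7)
The plan is to establish the four assertions in the order: well-definedness of $\delta_i$ as a map $H^*_T(G/P) \to H^*_T(G/P)$, the Leibniz rule (which will be needed to extend formulas from flow-up classes to arbitrary classes), the explicit action on the flow-up basis via Lemma \ref{formula for action}, and finally $\delta_i^2 = 0$.

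For well-definedness, I need to verify that $(p - s_i \cdot p)/\alpha_i$ is again a GKM class: its value at each vertex is polynomial, and its values satisfy the GKM relation on each edge. At a vertex $[v]$ the numerator is $p([v]) - s_i p([s_i v])$. If $[s_i v] = [v]$ it is divisible by $\alpha_i$ because $f - s_i f \in (\alpha_i)$ for any polynomial $f$. If $[s_iv] \neq [v]$, Theorem \ref{moment graph description} supplies an $\alpha_i$-edge between $[v]$ and $[s_iv]$, so GKM gives $p([v]) - p([s_iv]) \in (\alpha_i)$, which combines with the previous observation to give divisibility. To check GKM on an edge $[u] \mapsto [v]$ labeled $\beta$, I split cases. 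When $\beta = \alpha_i$ (so $[v] = [s_iu]$), writing $p([u]) - p([v]) = \alpha_i q$ reduces $\delta_i p([u]) - \delta_i p([v])$ to $q - s_i q$, which lies in $(\alpha_i)$. When $\beta \neq \alpha_i$, Lemma \ref{graph automorphisms} shows that $s_i$ sends the $\beta$-edge to an edge between $[s_iu]$ and $[s_iv]$ with positive-root label $\pm s_i(\beta)$; thus both $p([u]) - p([v])$ and $s_i(p([s_iu]) - p([s_iv]))$ lie in $(\beta)$, and since the distinct positive roots $\alpha_i$ and $\beta$ are coprime, divisibility by $\beta$ survives dividing by $\alpha_i$.

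The Leibniz formula \eqref{gen macdonald} follows from the same algebraic trick as Lemma \ref{Macdonald lemma}: using the twisted module property $s_i \cdot (c p_{[v]}) = (s_i c)(s_i \cdot p_{[v]})$, insert and subtract $(s_i c) p_{[v]}$ in the numerator of $\delta_i(c p_{[v]})$ to split it as $(\partial_i c) p_{[v]} + (s_i c) \delta_i p_{[v]}$. The formula \eqref{div diff formula} on flow-up classes is then immediate from Lemma \ref{formula for action}: when $[s_iv] \geq_P [v]$ the substitution $s_i \cdot p_{[v]} = p_{[v]}$ gives $\delta_i p_{[v]} = 0$, and when $[s_iv] <_P [v]$ the substitution $s_i \cdot p_{[v]} = p_{[v]} - \alpha_i p_{[s_iv]}$ gives $\delta_i p_{[v]} = p_{[s_iv]}$.

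Finally, $\delta_i^2 p_{[v]} = 0$ drops straight out of \eqref{div diff formula}: either $\delta_i p_{[v]} = 0$ already, or $\delta_i p_{[v]} = p_{[s_iv]}$ with $[s_i(s_iv)] = [v] >_P [s_iv]$, forcing the second application to vanish. To pass from the flow-up basis to arbitrary classes, expand in the (finite-rank) $S(\mathfrak{t}^*)$-basis of flow-up classes and iterate Leibniz twice; the cross-terms collapse using $\partial_i^2 = 0$ on $S(\mathfrak{t}^*)$ together with the routine identities $s_i \partial_i = \partial_i$ and $\partial_i s_i = -\partial_i$. The principal technical obstacle is the GKM-compatibility step in well-definedness for edges labeled $\beta \neq \alpha_i$, as it requires both the graph-automorphism property from Lemma \ref{graph automorphisms} and coprimality of distinct positive roots; every other step is a short algebraic manipulation built on Lemma \ref{formula for action}.
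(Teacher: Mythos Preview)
Your argument is correct, but it is organized differently from the paper's and trades effort between the two nontrivial points.

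For well-definedness, the paper does not do a direct GKM check. Instead it expands $p=\sum c_{[v]}p_{[v]}$ in the flow-up basis, proves the Leibniz identity \eqref{gen macdonald} first, and then uses Leibniz to reduce the question ``is $\delta_i p$ a class?'' to ``is $\delta_i p_{[v]}$ a class?'', which is immediate from Lemma \ref{formula for action}. This avoids your edge-by-edge verification (in particular the coprimality-of-roots step) at the cost of relying on the existence of the flow-up basis up front. Your direct GKM verification is a genuine alternative: it is basis-free and would work in any GKM setting with a compatible $W$-action, whereas the paper's shortcut really uses that the Schubert classes span.

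For $\delta_i^2=0$, the paper's route is much shorter than yours: it computes directly that
\[
\delta_i(\delta_i p)=\frac{1}{\alpha_i}\left(\frac{p-s_i\cdot p}{\alpha_i}-s_i\cdot\frac{p-s_i\cdot p}{\alpha_i}\right)
=\frac{1}{\alpha_i}\left(\frac{p-s_i\cdot p}{\alpha_i}-\frac{s_i\cdot p-p}{-\alpha_i}\right)=0,
\]
using only $s_i(\alpha_i)=-\alpha_i$ and $s_i^2=\mathrm{id}$. Your two applications of Leibniz together with $\partial_i^2=0$, $s_i\partial_i=\partial_i$, and $\partial_i s_i=-\partial_i$ give the same conclusion but with more bookkeeping.
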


\begin{proof}
The formula for $\delta_i$ is $\C$-linear because the $W$-action is.  Write $p = \sum c_{[v]} p_{[v]}$ in terms of the basis of flow-up classes.  We obtain
\[  \delta_i p  \displaystyle = \frac{\sum_{[v]} (c_{[v]}p_{[v]} - s_i \cdot (c_{[v]}p_{[v]}))}{\alpha_i} = 
 \sum_{[v]} \delta_i (c_{[v]}p_{[v]}).\]
To show $\delta_i p \in H^*_T(G/P)$ it suffices to show $\delta_i(c p_{[v]}) \in H^*_T(G/P)$ for each $c \in S(\mathfrak{t}^*)$ and flow-up class $p_{[v]} \in H^*_T(G/P)$.  Equation \eqref{gen macdonald} follows from
\[  \delta_i (c p_{[v]}) =  \displaystyle \frac{cp_{[v]} - (s_ic)p_{[v]}}{\alpha_i}+ \frac{(s_ic)p_{[v]} - (s_ic)(s_i \cdot p_{[v]})}{\alpha_i}\]
For every $c \in S(\mathfrak{t}^*)$ the BGG divided difference satisfies $\partial_i c \in S(\mathfrak{t}^*)$, and $s_i c \in H^*_T(G/P)$ as well.  By Equation \eqref{gen macdonald} it suffices to show $\delta_i p_{[v]} \in H^*_T(G/P)$.  But $\delta_i p_{[v]} \in H^*_T(G/P)$ by Equation \eqref{div diff formula}, which is a direct restatement of Lemma \ref{formula for action}.  Thus $\delta_i$ is well-defined on $H^*_T(G/P)$.  

The relation $\delta_i^2=0$ is a result of the expression
\[ \frac{p - s_i \cdot p}{\alpha_i} - s_i \cdot \left( \frac{p - s_i \cdot p}{\alpha_i} \right) = \frac{p - s_i \cdot p}{\alpha_i} - \frac{s_i \cdot p - s_i^2 \cdot p}{-\alpha_i}.\]
\end{proof}

Figure \ref{div diff example} gives all nonzero examples of the divided difference operator $\delta_i$ in the equivariant cohomology of the projective plane.  
\begin{figure}[h]
\begin{picture}(350,140)(0,-35)
\put(15,0){$\delta_1$}
\put(95,0){$=$}
\put(195,0){$-$}
\put(125,-30){\line(1,0){125}}
\put(170,-40){$t_1-t_2$}
\put(275,0){$=$}

\put(30,-20){\circle*{5}}
\put(30,20){\circle*{5}}
\put(50,0){\circle*{5}}
\multiput(30,-20)(0,4){10}{\circle*{1}}
\put(30,-20){\line(1,1){20}}
\put(50,-1){\line(-1,1){20}}
\put(50,1){\line(-1,1){20}}

\put(35,-22){$0$}
\put(55,-2){$t_1-t_2$}
\put(35,18){$t_1-t_3$}

\put(130,-20){\circle*{5}}
\put(130,20){\circle*{5}}
\put(150,0){\circle*{5}}
\multiput(130,-20)(0,4){10}{\circle*{1}}
\put(130,-20){\line(1,1){20}}
\put(150,-1){\line(-1,1){20}}
\put(150,1){\line(-1,1){20}}

\put(135,-22){$0$}
\put(155,-2){$t_1-t_2$}
\put(135,18){$t_1-t_3$}

\put(215,-20){\circle*{5}}
\put(215,20){\circle*{5}}
\put(235,0){\circle*{5}}
\multiput(215,-20)(0,4){10}{\circle*{1}}
\put(215,-20){\line(1,1){20}}
\put(235,-1){\line(-1,1){20}}
\put(235,1){\line(-1,1){20}}

\put(220,-22){$t_2 - t_1$}
\put(240,-2){$0$}
\put(220,18){$t_2-t_3$}

\put(300,-20){\circle*{5}}
\put(300,20){\circle*{5}}
\put(320,0){\circle*{5}}
\multiput(300,-20)(0,4){10}{\circle*{1}}
\put(300,-20){\line(1,1){20}}
\put(320,-1){\line(-1,1){20}}
\put(320,1){\line(-1,1){20}}

\put(305,-22){$1$}
\put(325,-2){$1$}
\put(305,18){$1$}

\put(15,80){$\delta_2$}
\put(95,80){$=$}
\put(195,80){$-$}
\put(125,50){\line(1,0){125}}
\put(170,40){$t_2-t_3$}
\put(275,80){$=$}

\put(30,60){\circle*{5}}
\put(30,100){\circle*{5}}
\put(50,80){\circle*{5}}
\multiput(30,60)(0,4){10}{\circle*{1}}
\put(30,60){\line(1,1){20}}
\put(50,79){\line(-1,1){20}}
\put(50,81){\line(-1,1){20}}

\put(35,58){$0$}
\put(55,78){$0$}
\put(35,98){$(t_1-t_3)(t_2-t_3)$}

\put(130,60){\circle*{5}}
\put(130,100){\circle*{5}}
\put(150,80){\circle*{5}}
\multiput(130,60)(0,4){10}{\circle*{1}}
\put(130,60){\line(1,1){20}}
\put(150,79){\line(-1,1){20}}
\put(150,81){\line(-1,1){20}}

\put(135,58){$0$}
\put(155,78){$0$}
\put(135,98){\small $(t_1-t_3)(t_2-t_3)$}

\put(215,60){\circle*{5}}
\put(215,100){\circle*{5}}
\put(235,80){\circle*{5}}
\multiput(215,60)(0,4){10}{\circle*{1}}
\put(215,60){\line(1,1){20}}
\put(235,79){\line(-1,1){20}}
\put(235,81){\line(-1,1){20}}

\put(220,58){$0$}
\put(230,88){\tiny $(t_1-t_2)(t_3-t_2)$}
\put(220,98){$0$}

\put(300,60){\circle*{5}}
\put(300,100){\circle*{5}}
\put(320,80){\circle*{5}}
\multiput(300,60)(0,4){10}{\circle*{1}}
\put(300,60){\line(1,1){20}}
\put(320,79){\line(-1,1){20}}
\put(320,81){\line(-1,1){20}}

\put(305,58){$0$}
\put(325,78){$t_1-t_2$}
\put(305,98){$t_1-t_3$}

\end{picture}
\caption{Divided difference operators on Schubert classes in $H^*_T(\mathbb{P}^2)$} \label{div diff example}
\end{figure}

The next proposition proves that these $\delta_i$ satisfy the braid relations.  The main step uses Equation \eqref{gen macdonald} to reduce the problem to a calculation on the BGG divided difference operators on $H^*(G/B)$.  This key calculation is performed in Lemma \ref{key braid calculation} following the proof of the proposition.  

We use the following three definitions.

\begin{definition}
For each $w \in W$, the set $R(w)$ consists of all reduced words for $w$ in the simple reflections $\{s_1, s_2, \ldots, s_n\}$.
\end{definition}

\begin{definition}
Given a fixed reduced factorization $w=w_1 w_2 \cdots w_k \in W$ and a reduced word $u \leq w$, let $\mathcal{S}_u^w$ be the set of integer sequences $1 \leq b_1 < b_2 < \cdots < b_{\ell(u)} \leq k$ such that $w_{b_1}w_{b_2} \cdots w_{b_{\ell(u)}} \in R(u)$.
\end{definition}

\begin{definition}
Given a reduced factorization $w=w_1 w_2 \cdots w_k \in W$ and an integer sequence ${\bf b} = b_1 b_2 \cdots b_l$ satisfying $1 \leq b_1 < b_2 < \cdots < b_l \leq k$, define $\iota_{w}({\bf b})$ to be the word in $\{s_1, s_2, \ldots, s_n, \partial_1, \partial_2, \ldots, \partial_n\}$ obtained from $w$ by switching the letters that are {\em not} in positions $b_1, b_2, \ldots, b_l$ from $s_i$ to $\partial_i$.
\end{definition}

For instance, in type $A_n$ if $w = s_1s_2s_1$ then $R(w) = \{s_1s_2s_1, s_2s_1s_2\}$.  The set $\mathcal{S}_{s_1}^{s_1s_2s_1} = \{1,3\}$ which gives the words $\iota_{s_1s_2s_1}(1) = s_1 \partial_2 \partial_1$ and $\iota_{s_1s_2s_1}(3) = \partial_1 \partial_2 s_1$.

\begin{proposition} \label{braid relations}
The divided difference operators $\{\delta_i\}$ satisfy the braid relations.
\end{proposition}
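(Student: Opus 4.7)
The plan is to show that the composites $\delta_{i_1}\cdots\delta_{i_k}$ and $\delta_{j_1}\cdots\delta_{j_k}$ agree on $H^*_T(G/P)$ whenever $s_{i_1}\cdots s_{i_k}$ and $s_{j_1}\cdots s_{j_k}$ are two reduced expressions for the same element $w \in W$. By $\C$-linearity and the fact that the flow-up classes form an $S(\mathfrak{t}^*)$-basis of $H^*_T(G/P)$, it suffices to check agreement on each element of the form $c\,p_{[v]}$ with $c \in S(\mathfrak{t}^*)$ and $[v] \in W/W_J$.

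I would first dispose of the case $c = 1$. Iterating Equation \eqref{div diff formula}, the class $\delta_{i_1}\cdots\delta_{i_k}\,p_{[v]}$ equals $p_{[s_{i_1}s_{i_2}\cdots s_{i_k}v]}$ when $\ell_P$ strictly decreases at every intermediate step of the chain $[v]\mapsto[s_{i_k}v]\mapsto\cdots\mapsto[s_{i_1}\cdots s_{i_k}v]$, and vanishes otherwise. By Corollary \ref{step by one} each step changes $\ell_P$ by at most one; consequently, if the total change $\ell_P([wv])-\ell_P([v])$ equals $-k$ then every intermediate step must be a descent, while if the total change exceeds $-k$ then some intermediate step fails to be a descent. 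In either case the outcome depends only on $w$ and $[v]$, not on the particular reduced expression, so the braid relations hold on flow-up classes.

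To handle a general element $c\,p_{[v]}$, I would iterate the Leibniz rule \eqref{gen macdonald}. Each application of some $\delta_i$ to a product splits into two summands, so after $k$ applications we obtain a sum indexed by binary strings $\epsilon\in\{0,1\}^k$. Writing $X_i^0 = \partial_i$ and $X_i^1 = s_i$, the $\epsilon$-term is
\[
\bigl(X_{i_1}^{\epsilon_1} X_{i_2}^{\epsilon_2} \cdots X_{i_k}^{\epsilon_k}\,c\bigr)\cdot\bigl(\delta_{i_{l_1}}\delta_{i_{l_2}}\cdots \delta_{i_{l_m}}\,p_{[v]}\bigr),
\]
where $l_1<\cdots<l_m$ are the positions of the $1$'s in $\epsilon$. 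By the first step, the right-hand factor collapses to either $p_{[u]}$ for a specific $[u]\in W/W_J$ determined by $(i_{l_1},\ldots,i_{l_m})$ and $[v]$, or to zero. Grouping by this target flow-up class $p_{[u]}$, the equality of the two composite operators on $c\,p_{[v]}$ reduces to a matching identity among the mixed polynomial operators $X_{i_1}^{\epsilon_1}\cdots X_{i_k}^{\epsilon_k}$ and $X_{j_1}^{\epsilon_1}\cdots X_{j_k}^{\epsilon_k}$ on $S(\mathfrak{t}^*)$, summed over the subsets of positions that reach a common $[u]$.

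The main obstacle is precisely this reduction and the resulting combinatorial identity among mixed words in the BGG operators $\partial_i$ and the simple reflections $s_i$ on $S(\mathfrak{t}^*)$, which the paper isolates as Lemma \ref{key braid calculation}. Once that lemma is in hand, its proof should rest only on the classical braid relations for the $\partial_i$, the braid relations for the $s_i$ coming from the Weyl group action, and the Leibniz identity of Lemma \ref{Macdonald lemma} applied at the polynomial level. Granting that purely polynomial identity, the braid relations for the $\delta_i$ on $H^*_T(G/P)$ follow term by term from the expansion above.
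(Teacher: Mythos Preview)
Your overall strategy---expand via the Leibniz rule \eqref{gen macdonald} and then match coefficients of flow-up classes by a polynomial identity---is exactly the paper's. Two refinements in the paper make the reduction to Lemma \ref{key braid calculation} clean, and your proposal blurs both.

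First, the paper does not compare arbitrary reduced expressions for a general $w$; it only verifies the defining relations $(s_is_j)^m=(s_js_i)^m$, which suffices. This matters because Lemma \ref{key braid calculation} is stated only for these dihedral words, and because every proper subword $u<(s_is_j)^m$ then has a \emph{unique} reduced factorization, so the operator $\delta_u$ is already unambiguous before the braid relations are established. In your setup the subsequences $s_{i_{l_1}}\cdots s_{i_{l_m}}$ of an arbitrary reduced word need not be reduced, and your appeal to ``the first step'' for them is a misattribution: the collapsing of $\delta_{i_{l_1}}\cdots\delta_{i_{l_m}}p_{[v]}$ to some $p_{[u]}$ or $0$ is true, but it follows from iterating \eqref{div diff formula} directly, not from your reduced-word argument.

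Second, the paper groups the Leibniz expansion by the Weyl-group element $u$ represented by the subword, not by the target coset $[uv]\in W/W_J$. This is precisely what makes the coefficient of $\delta_u p_{[v]}$ equal to $\sum_{\mathbf b\in\mathcal S_u}\iota(\mathbf b)c$, i.e.\ literally a side of Lemma \ref{key braid calculation}. Your grouping by the target flow-up class yields an identity that \emph{a priori} depends on $P$ and on $[v]$, and is not the statement of the lemma; it would only follow from the lemma after a further regrouping by $u$. Once you restrict to a single braid relation and group by $u$ rather than by $[u]$, your outline becomes the paper's proof verbatim.
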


\begin{proof}
Suppose $(s_is_j)^m=(s_js_i)^m$ is a braid relation defining $W$ for $i \neq j$.  We will compare $(\delta_i \delta_j)^m (cp_{[v]})$ to $(\delta_i \delta_j)^m (cp_{[v]})$ for an arbitrary $c \in S(\mathfrak{t}^*)$ and flow-up class $p_{[v]} \in H^*_T(G/P)$.   The flow-up classes form an $S(\mathfrak{t}^*)$-module basis of $H^*_T(G/P)$, so additivity of the divided difference operators $\delta_i$ implies the claim.

Note that if $u < (s_is_j)^m$ then $u$ has a unique minimal-length factorization in $W$.  Thus $\delta_u$ can be defined as the sequence of $\delta_i$ and $\delta_j$ corresponding to $u$.  Repeated application of Equation \eqref{gen macdonald} shows that
\[ (\delta_i \delta_j)^m (cp_{[v]}) = (s_is_j)^m (c)   \left((\delta_i \delta_j)^m p_{[v]} \right) + \sum_{\footnotesize \begin{array}{c} u < (s_is_j)^m s.t. \\ \ell_P([uv]) = \\  \ell_P([v]) - \ell(u) \end{array}} (\delta_u p_{[v]}) \left( \sum_{{\bf b} \in \mathcal{S}_u^{(s_is_j)^m}} \iota_{(s_is_j)^m}({\bf b}) c \right)\]
and similarly for $(\delta_j \delta_i)^m (cp_{[v]})$.
The coefficient of $\delta_u p_{[v]}$ is a sum of a sequence of BGG divided difference operators applied to elements of $S(\mathfrak{t}^*)$.  Lemma \ref{key braid calculation} proves 
\[\sum_{{\bf b} \in \mathcal{S}_u^{(s_is_j)^m}} \iota_{(s_is_j)^m}({\bf b}) c = \sum_{{\bf b} \in \mathcal{S}_u^{(s_js_i)^m}} \iota_{(s_js_i)^m}({\bf b}) c\]
for each $u \leq (s_is_j)^m$.  Since 
\[\{u \in W: u < (s_is_j)^m\} = \{u \in W: u < (s_js_i)^m\}\]
we have 
\[(\delta_i \delta_j)^m (cp_{[v]}) = (\delta_j \delta_i)^m (cp_{[v]})\] 
for all  $c \in S(\mathfrak{t}^*)$ if and only if  
\[(\delta_i \delta_j)^m p_{[v]} = (\delta_j \delta_i)^m p_{[v]}.\]
If $\ell_P([(s_is_j)^mv]) = \ell_P([v]) - 2m$ then repeated application of Equation \ref{div diff formula} shows
\[ (\delta_i \delta_j)^m p_{[v]} = p_{[(s_is_j)^mv]} = (\delta_j \delta_i)^m p_{[v]}.\]
If not, the same Equation gives $(\delta_i \delta_j)^m p_{[v]} = 0 = (\delta_j \delta_i)^m p_{[v]}$.  The claim follows.
\end{proof}

\begin{lemma} \label{key braid calculation}
Assume $i\neq j$ and that $(s_is_j)^m= (s_js_i)^m$ is a braid relation in $W$.  Let $u \in W$ be {\em any} reduced word with $u \leq (s_is_j)^m$.  Then
\begin{equation} \label{claim of calc}
\sum_{{\bf{b}} \in \mathcal{S}_u^{(s_is_j)^m}} \iota_{(s_is_j)^m}({\bf b})  = \sum_{{\bf{b}} \in \mathcal{S}_u^{(s_js_i)^m}} \iota_{(s_js_i)^m}({\bf b}).\end{equation}
\end{lemma}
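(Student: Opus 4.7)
The plan is to lift the claimed identity to an algebraic statement inside the localized smash-product algebra $A = Q \rtimes W$, where $Q$ is the fraction field of $S(\mathfrak{t}^*)$ and $W$ acts on $Q$ in the usual way. Realize each $\delta_i$ as the element $\alpha_i^{-1}(1 - s_i) \in A$. A short calculation using the smash-product relation $s_i \cdot c = (s_i c) \cdot s_i$ for $c \in Q$ shows $\delta_i^2 = 0$, and the $\delta_i$'s satisfy the braid relations $(\delta_i \delta_j)^m = (\delta_j \delta_i)^m$ inside $A$ (a rank-two algebraic identity that is independent of anything about $H^*_T(G/P)$).

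The central calculation is the commutation identity
$$\delta_i \cdot c = \partial_i(c) + (s_i c) \cdot \delta_i \qquad \textup{in } A, \textup{ for each } c \in S(\mathfrak{t}^*),$$
which is the algebraic shadow of Equation \eqref{gen macdonald}. Iterating this along a word $w = w_1 w_2 \cdots w_k$ — at each position choosing either to extract a $\partial_{w_\ell}(c)$ or to push $c$ past as $s_{w_\ell} c$ — gives
$$\delta_{w_1} \delta_{w_2} \cdots \delta_{w_k} \cdot c = \sum_{B = \{b_1 < \cdots < b_l\} \subseteq \{1,\ldots,k\}} \iota_w(B)(c) \cdot \delta_{w_{b_1}} \delta_{w_{b_2}} \cdots \delta_{w_{b_l}}.$$
Because $\delta_i^2 = 0$ and the braid relations hold inside $A$, the trailing nilCoxeter product vanishes unless the subword $w_{b_1} \cdots w_{b_l}$ is reduced, in which case it equals $\delta_u$ for the element $u$ it represents. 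Grouping the surviving subsets by $u$ yields
$$\delta_{w_1} \cdots \delta_{w_k} \cdot c = \sum_{u \leq (s_is_j)^m} F_u^w(c) \cdot \delta_u, \qquad F_u^w(c) := \sum_{\mathbf{b} \in \mathcal{S}_u^w} \iota_w(\mathbf{b})(c).$$

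Applying this expansion for both $w = (s_is_j)^m$ and $w = (s_js_i)^m$, and invoking the braid identity $(\delta_i \delta_j)^m \cdot c = (\delta_j \delta_i)^m \cdot c$ in $A$, produces
$$\sum_u F_u^{(s_is_j)^m}(c) \, \delta_u = \sum_u F_u^{(s_js_i)^m}(c) \, \delta_u.$$
The elements $\{\delta_u : u \in W\}$ are $Q$-linearly independent in $A$: expanding each $\delta_u$ in the natural $Q$-basis $\{\hat w : w \in W\}$ is triangular with respect to Bruhat order and has nonzero diagonal entries. Comparing coefficients of $\delta_u$ gives $F_u^{(s_is_j)^m}(c) = F_u^{(s_js_i)^m}(c)$ for every $u \leq (s_is_j)^m$ and every $c \in S(\mathfrak{t}^*)$, which is the claimed equality \eqref{claim of calc} of operators.

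The main obstacle is the bookkeeping around the expansion: one must confirm that iterating $\delta_i \cdot c = \partial_i(c) + (s_i c) \cdot \delta_i$ produces exactly the combinatorial word $\iota_w(\mathbf{b})$ as defined in the text, and that the nilCoxeter relations inside $A$ eliminate precisely the non-reduced-subword contributions. The braid relations for $\delta_i$ in $A$ and the $Q$-linear independence of $\{\delta_u\}$ are classical facts about the localized nilHecke algebra, but should be verified or cited explicitly.
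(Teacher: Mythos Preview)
Your argument is correct and takes a genuinely different route from the paper's. The paper proceeds by induction on $\ell(u)$: the base case $u=e$ is the BGG braid relation $(\partial_i\partial_j)^m=(\partial_j\partial_i)^m$ itself, and the inductive step expands $(\partial_i\partial_j)^m(f\mathfrak{S}_{u_0})$ via the Leibniz rule (Lemma \ref{Macdonald lemma}) and plugs in a Schubert polynomial $\mathfrak{S}_{u_0}$ to isolate the $u_0$-coefficient, using that $\partial_u\mathfrak{S}_{u_0}$ vanishes for the higher $u$'s and is a nonzero scalar at $u_0$. You instead pass to the localized smash product $A=Q\rtimes W$, where the Demazure elements $\alpha_i^{-1}(1-s_i)$ already satisfy the nilCoxeter relations as elements of $A$ (equivalently, via the faithful action of $A$ on $Q$, this is again exactly the BGG braid relation for $\partial_i$), so the expansion of $(\delta_i\delta_j)^m\cdot c$ in the $Q$-basis $\{\delta_u\}$ immediately yields the identity for all $u$ at once. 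Both proofs rest on the same underlying input---the classical braid relations for the $\partial_i$---but yours packages the coefficient extraction via linear independence in $A$, while the paper extracts coefficients one at a time via Schubert polynomials. Your approach is shorter and more structural; the paper's is more self-contained in that it needs no facts about the nilHecke algebra beyond what it has already stated. One remark: you should say explicitly that your elements $\delta_i\in A$ are \emph{a priori} different objects from the paper's operators $\delta_i$ on $H^*_T(G/P)$, so that invoking their braid relations is not circular with Proposition~\ref{braid relations}.
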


\begin{proof}
The proof is by induction on the length of $u$.  If $u$ has length zero then $\iota_{(s_is_j)^m}(\emptyset) = (\partial_i \partial_j)^m$ and $\iota_{(s_js_i)^m}(\emptyset) = (\partial_j \partial_i)^m$.  BGG divided difference operators satisfy the braid relations so $\iota_{(s_is_j)^m}(\emptyset)=\iota_{(s_js_i)^m}(\emptyset)$. Similarly, when $u=(s_is_j)^m = (s_js_i)^m$ the sets $\mathcal{S}_{(s_is_j)^m}^{(s_is_j)^m}$ and $\mathcal{S}_{(s_js_i)^m}^{(s_js_i)^m}$ both equal  $\{{\bf b} = (1,2,\ldots,2m)\}$ and so $\iota_{(s_is_j)^m}({\bf b})=(\partial_i\partial_j)^m = \iota_{(s_js_i)^m}({\bf b})$.

Assume that if $\ell(u) \leq k$ then Equation \eqref{claim of calc} holds for $u$.  For every pair $f,g \in S(\mathfrak{t}^*)$, repeated application of Lemma \ref{Macdonald lemma} shows that
\[(\partial_i \partial_j)^m (fg) = \sum_{u \leq (s_is_j)^m} (\partial_u g) \left( \sum_{{\bf{b}} \in \mathcal{S}_u^{(s_is_j)^m}} \iota_ {(s_is_j)^m}({\bf b})f  \right)\]
and similarly for $(\partial_j \partial_i)^m(fg)$.
Note that $u \leq (s_is_j)^m$ if and only if $u \leq (s_js_i)^m$.  

If $u < (s_is_j)^m$ then $u$ has a unique factorization into simple reflections.  Choose $u_0 < (s_is_j)^m$ of length $k+1 \neq m$ and let $g$ be any Schubert polynomial $\mathfrak{S}_{u_0}$ satisfying the conditions set out at the beginning of this Section.  In particular $\partial_i \mathfrak{S}_u = 0$ whenever $us_i \geq u$.  It follows that if $\ell(u) \geq k+1$ and $u \neq u_0$ then  $\partial_u \mathfrak{S}_{u_0} = 0$.  The inductive hypothesis proved that if $\ell(u) \leq k$ then 
\[(\partial_u  \mathfrak{S}_{u_0}) \left(  \sum_{{\bf b} \in \mathcal{S}_u^{(s_is_j)^m}} \iota_ {(s_is_j)^m}({\bf b})f  \right) = (\partial_u  \mathfrak{S}_{u_0}) \left(  \sum_{{\bf b} \in \mathcal{S}_u^{(s_js_i)^m}} \iota_ {(s_js_i)^m}({\bf b})f  \right).\]
It follows that 
\begin{equation} \label{temp} 
\begin{array}{l} (\partial_i \partial_j)^m(f \mathfrak{S}_{u_0}) - (\partial_j \partial_i)^m (f \mathfrak{S}_{u_0}) = \\  \hspace{.5in} (\partial_{u_0}  \mathfrak{S}_{u_0})\left( \sum_{{\bf b} \in \mathcal{S}_{u_0}^{(s_is_j)^m}} \iota_ {(s_is_j)^m}({\bf b})f  -  \sum_{{\bf b} \in \mathcal{S}_{u_0}^{(s_js_i)^m}} \iota_ {(s_js_i)^m}({\bf b})f  \right).\end{array}\end{equation}
Recall that $\partial_{u_0}  \mathfrak{S}_{u_0}$ was assumed to be a nonzero integer.  The nil-Coxeter relations for the BGG divided difference operators imply that $(\partial_i\partial_j)^m = (\partial_j \partial_i)^m$ and so Equation \eqref{temp} is zero.  Since $f$ was arbitrary Equation \eqref{claim of calc} holds for $u_0$.  By induction it is true for all $u \leq (s_is_j)^m$.
\end{proof}

Together, these results show $\delta_w$ is well-defined for each $w \in W$. By Proposition \ref{braid relations}, we have  $\delta_{{i_1}} \cdots \delta_{i_k} p_{[u]}= p_{[s_{i_1} \cdots s_{i_k} u]}$ for each flow-up class $p_{[u]} \in H^*_T(G/P)$ with $\ell_P([s_{i_1} \cdots s_{i_k}u] = \ell_P([u]) - k$.    By Proposition \ref{well-defined, nil coxeter}, we have $\delta_{{i_1}} \cdots \delta_{i_k} p_{[u]} = 0$ otherwise.  Hence $\delta_w=\delta_{{i_1}} \cdots \delta_{i_k}$ for any reduced factorization $w = s_{{i_1}} \cdots s_{i_k}$.

\begin{remark}
Divided difference operators are $\C$-module homomorphisms defined on $H^*_T(G/P)$.  As a $\C$-module, the ordinary cohomology $H^*(G/P)$ is the $\C$-span of the flow-up classes.  (The ring structure of $H^*(G/P)$ is obtained from the ring isomorphism 
\[H^*(G/P) \cong \frac{H^*_T(G/P)}{\ideal{t_1, t_2, \ldots, t_n} H^*_T(G/P)}\] 
\cite[Equation (1.2.4)]{GKM}.)  This means that the divided difference operators $\delta_i$ defined on $H^*_T(G/P)$ also apply to ordinary cohomology, viewed as a submodule of $H^*_T(G/P)$.
\end{remark}

\begin{remark}
Kostant-Kumar defined a different divided difference operator on $H^*_T(G/B)$ using the GKM presentation \cite{KK}.  We denote this divided difference operator $\partial_i^{KK}$ because Arabia proved that it induces the BGG divided difference operator on ordinary cohomology \cite{A}.  If $p \in H^*_T(G/B)$ is a class in the GKM presentation then for each $v \in W$
\[(\partial_i^{KK}p)(v) = \frac{p(v)-p(vs_i)}{-v(\alpha_i)}.\]
We remark that Kostant-Kumar's divided difference operator is not well-defined on $H^*_T(G/P)$.  For a more detailed treatment see \cite{T2}.  
\end{remark}

\section{Generating Schubert classes} \label{generating class section}

A classical result for flag varieties states that successive application of BGG divided difference operators on the Schubert class $p_{w_0}$ of the longest element $w_0 \in W$ generates all Schubert classes \cite{BGG}.  We prove this for Grassmannians with the divided difference operators $\delta_i$.

\begin{theorem} \label{gen all classes}
Each Schubert class $p_{[w]}$ in $H^*(G/P)$ and in $H^*_T(G/P)$ can be obtained as $\delta_v p_{[w_0]}$ for some $v \in W$.
\end{theorem}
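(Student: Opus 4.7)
The plan is to exhibit an explicit Weyl-group element $v \in W$ of length $\ell_P([w_0]) - \ell_P([w])$ such that $\delta_v p_{[w_0]} = p_{[w]}$, and then verify the identity by a length-counting argument along a reduced factorization.

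Let $w^J \in W^J$ denote the minimal-length representative of $[w]$, let $w_0^J \in W^J$ denote the minimal-length representative of $[w_0]$, and let $w_{0,J}$ be the longest element of $W_J$. I propose to take $v := w^J (w_0^J)^{-1}$. Using the (length-additive) parabolic factorization $w_0 = w_0^J \cdot w_{0,J}$ together with the standard identity $\ell(xw_0) = \ell(w_0) - \ell(x)$ and the fact that $w_0^{-1} = w_0$, one computes
\[\ell(v) \;=\; \ell(w^J w_{0,J} w_0) \;=\; \ell(w_0) - \ell(w^J w_{0,J}) \;=\; \ell(w_0) - \ell(w^J) - \ell(w_{0,J}) \;=\; \ell(w_0^J) - \ell(w^J),\]
which equals $\ell_P([w_0]) - \ell_P([w])$. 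A parallel computation gives $vw_0 = w^J w_{0,J}$, and hence $[vw_0] = [w^J] = [w]$.

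Fix any reduced factorization $v = s_{i_1} s_{i_2} \cdots s_{i_k}$ and define the intermediate elements $y_j := s_{i_{k-j+1}} s_{i_{k-j+2}} \cdots s_{i_k} \cdot w_0$ for $j = 0, 1, \ldots, k$, so that $y_0 = w_0$ and $[y_k] = [vw_0] = [w]$. Then $\ell_P([y_0]) = \ell(w_0^J)$ and $\ell_P([y_k]) = \ell(w^J) = \ell(w_0^J) - k$. By Corollary \ref{step by one}, $|\ell_P([y_j]) - \ell_P([y_{j-1}])| \leq 1$ for each $j$; since the net change across $k$ steps is exactly $-k$, each individual step must contribute precisely $-1$. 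Equation \eqref{div diff formula} then forces $\delta_{i_{k-j+1}} p_{[y_{j-1}]} = p_{[y_j]}$ at every stage, so iterating gives
\[\delta_v p_{[w_0]} \;=\; \delta_{i_1} \delta_{i_2} \cdots \delta_{i_k} p_{[w_0]} \;=\; p_{[y_k]} \;=\; p_{[w]}\]
in $H^*_T(G/P)$. The result for ordinary cohomology follows immediately, since $\delta_i$ descends to $H^*(G/P)$ and acts on Schubert classes by the same formula (the remark after Proposition \ref{braid relations}).

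The main subtlety lies in choosing $v$ so that the length identity $\ell(v) = \ell_P([w_0]) - \ell_P([w])$ holds on the nose; once $v := w^J (w_0^J)^{-1}$ is in hand, the sandwich argument using Corollary \ref{step by one} makes the intermediate-step verification essentially automatic, because the maximum possible decrease per application of $\delta_i$ exactly matches the required total across the reduced word.
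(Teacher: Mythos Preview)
Your proof is correct. The overall architecture matches the paper's: produce a $v\in W$ of length exactly $\ell_P([w_0])-\ell_P([w])$ with $[vw_0]=[w]$, then argue that along any reduced word for $v$ the parabolic length must drop by one at every step, so Equation~\eqref{div diff formula} applies iteratively.

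Where you and the paper differ is in how $v$ is manufactured and how the step-by-step length drop is justified. The paper invokes the order-reversing involution $\varphi:[u]\mapsto[w_0u]$ on $W/W_J$ together with the diagram automorphism $s_i\mapsto w_0s_iw_0^{-1}$: it takes a reduced expression for the minimal representative of $[w_0^{-1}u]$, conjugates each simple reflection by $w_0$, and appeals to the fact that $\varphi$ sends ascending chains to descending chains. You instead write down $v=w^J(w_0^J)^{-1}$ explicitly and verify $\ell(v)=\ell(w_0^J)-\ell(w^J)$ by pure length arithmetic using the parabolic factorization $w_0=w_0^J w_{0,J}$ and the identity $\ell(xw_0)=\ell(w_0)-\ell(x)$; the monotone length drop is then forced by your pigeonhole (``sandwich'') argument via Corollary~\ref{step by one}. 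Your route avoids any mention of the order-reversing bijection or of conjugation by $w_0$, at the cost of a short computation with Coxeter-group length identities; the paper's route is more structural but leaves the per-step length drop slightly more implicit. Both arguments are short, and neither is clearly superior.
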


\begin{proof}
Recall that $W/W_J$ has an order-reversing bijection $\varphi: W/W_J \rightarrow W/W_J$ with $[u] \leq_P [w]$ if and only if $\varphi([w]) \leq_P \varphi([u])$.  If $w_0$ is the longest element of $W$ then $\varphi$ can be defined by $\varphi([u]) = [w_0u]$.  (Among others, Bj\"{o}rner-Brenti prove this \cite[Proposition 2.5.4]{BB}.)  

The shortest element of $W/W_J$ is the identity $[e]$ and is sent to the longest element $\varphi([e]) = [w_0]$.  The edge $[u] \mapsto [s_{\alpha}u]$ maps to the edge $[w_0s_{\alpha}u] \mapsto [w_0u]$, which is labeled $w_0(\alpha)$.  If $s_i$ is a simple reflection then $w_0s_iw_0^{-1}$ is also simple.

Each element $[w_0^{-1}u] \in W/W_J$ has a reduced factorization into simple reflections, say $[w_0^{-1}u] = s_{i_1} \cdots s_{i_k}[e]$ with $\ell_P([w_0^{-1}u]) = k$.  Suppose that $w_0s_{i_j}w_0^{-1} = s_{i_j'}$ for each $j$.  Applying $\varphi$ shows that $[u] = s_{i_1'} \cdots s_{i_k'} [w_0]$ and $\ell_P([w_0]) = \ell_P([u])+k$.  Equation \eqref{div diff formula} proves $\delta_{i_1'} \cdots \delta_{i_k'} p_{[w_0]} = p_{[u]}$.
\end{proof}

\section{A generalized Billey formula for flow-up classes} \label{billey section}

Billey gives a formula for the localizations of equivariant Schubert classes in flag varieties \cite{Bi2}.  We use Corollary \ref{basic formula} to generalize Billey's formula to arbitrary partial flag varieties. The new formula is similar to the original: to compute the localization $p_{[w]}([v])$, use minimal-length representatives $w \in [w]$ and $v \in [v]$ and compute $p_w(v)$ in the equivariant cohomology of the full flag variety.  In fact, we will show that $v$ may be chosen arbitrarily from the coset $[v]$, though $w$ must be minimal-length in $[w]$.  Billey's proof used the divided difference formula of Kostant-Kumar \cite{KK}, which is not well-defined on $H^*_T(G/P)$.

\begin{theorem} \label{Billey generalization}
Let $[v],[w]$ be two elements of $W/W_J$ and assume that $v,w$ are minimal in $[v],[w]$. Fix a reduced word for $v$, say $v=b_1\cdots b_l$.  Let $R(w)$ denote the set of reduced words for $w$ and for each $i$ let $r_b(i)=b_1b_2 \cdots b_{i-1}(\alpha_i)$.  Then
\[p_{[w]}([v]) = p_w(v) = \sum_{b_{i_1}\cdots b_{i_k} \in R(w)} r_b(i_1) r_b(i_2)\cdots r_b(i_k)\]
where the sum is over $1 \leq i_1 < i_2 < \cdots < i_k \leq l$ with $b_{i_1}\cdots b_{i_k} \in R(w)$.
\end{theorem}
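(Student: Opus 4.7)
The plan is to prove the theorem in two stages. First, I would establish the equality $p_{[w]}([v]) = p_w(v)$ between the $G/P$-localization and the corresponding $G/B$-localization; second, I would invoke Billey's original formula \cite{Bi2}, which expresses $p_w(v)$ as the stated sum over reduced subwords of the chosen reduced factorization $v = b_1 \cdots b_l$. The key structural fact is that Corollary \ref{basic formula} specializes at $P = B$ to the analogous recursion for $H^*_T(G/B)$, so both the $G/P$ and $G/B$ localizations satisfy the same recursive pattern; the task then becomes showing they satisfy matching initial conditions and that the case-splits line up along the induction.

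For the first stage, I would induct on $\ell(w) + \ell(v)$. The base case is $\ell(w) = \ell(v) = 0$, where both sides equal $1$. For the inductive step, set $s_i = b_1$ and $v' = s_i v = b_2 \cdots b_l$. Since $v \in W^J$ and $s_i v < v$ in Bruhat, Corollary \ref{simple edges} forces $v' \in W^J$, so $v'$ is the unique minimum of its coset in $W/W_J$. I would apply Corollary \ref{basic formula} with $s_i$ to expand $p_{[w]}([v])$, apply the same corollary at $P = B$ to expand $p_w(v)$, and match the resulting expressions term by term using the inductive hypothesis on both $(w, v')$ and (in the second case below) on $(s_iw, v)$.

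The compatibility that makes the two recursions agree again comes from Corollary \ref{simple edges}: since $w \in W^J$, either $s_i w \in W^J$ (so $[s_i w] \geq_P [w]$ if and only if $s_i w \geq w$ in the Bruhat order of $W$), or $[s_i w] = [w]$ (in which case minimality of $w$ forces $s_i w > w$ in $W$). In either situation the conditional case of Corollary \ref{basic formula} for $G/P$ selects the same clause as the corresponding condition in the $G/B$ version, so the $s_i(p_{[w]}([v']))$ term matches $s_i(p_w(v'))$ by induction. In the remaining subcase $[s_i w] <_P [w]$, Corollary \ref{simple edges} again places $s_i w$ in $W^J$, so $s_i w$ is minimum in $[s_i w]$ and the extra term $\alpha_i\, p_{[s_i w]}([v]) = \alpha_i\, p_{s_i w}(v)$ is handled by the inductive hypothesis (the parameter $\ell(s_i w) + \ell(v) = \ell(w) + \ell(v) - 1$ has dropped).

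The main obstacle is the bookkeeping required to keep everything in minimum-length representatives as the induction unfolds: one must verify at each step that both the $w$-side and the $v$-side of the recursion reduce to Weyl group elements still minimal in their cosets, so that Corollary \ref{basic formula} remains applicable in both $G/P$ and $G/B$ and the two recursions stay synchronized throughout. Once $p_{[w]}([v]) = p_w(v)$ is established, substituting Billey's closed form \cite{Bi2} for $p_w(v)$ — a sum over subwords $b_{i_1} \cdots b_{i_k}$ of $b_1 \cdots b_l$ that are themselves reduced words for $w$ — yields the formula of the theorem.
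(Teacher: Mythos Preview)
Your proposal is correct but takes a genuinely different route from the paper. The paper inducts \emph{downward} on $\ell_P([w])$ alone, starting from the maximal element of $W/W_J$; in its inductive step it chooses $s_i$ with $[s_iw] >_P [w]$ (so $s_i$ depends on $w$, not on $v$), applies Corollary~\ref{basic formula} at $[s_iw]$, and rederives Billey's combinatorial sum directly in the $G/P$ setting by unwinding the recursion through a case-split on whether $[s_iv] <_P [v]$ or $[s_iv] \geq_P [v]$. You instead induct on $\ell(w)+\ell(v)$, take $s_i = b_1$ (so $s_i$ is determined by $v$), and reduce the statement to the identity $p_{[w]}([v]) = p_w(v)$, after which you simply quote Billey's $G/B$ formula. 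Your route is more economical, since it avoids reproving the closed formula; the paper's is more self-contained and makes the methodological point (stressed in the text) that the new operators $\delta_i$ suffice without any appeal to the Kostant--Kumar machinery underlying Billey's original argument.

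One small bookkeeping gap: your inductive step presumes $b_1$ exists, i.e.\ $\ell(v) \geq 1$. When $v = e$ and $\ell(w) > 0$ you cannot set $s_i = b_1$; you should note separately that in this case both $p_{[w]}([e])$ and $p_w(e)$ vanish by the support condition on flow-up classes, so the equality is immediate.
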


\begin{proof}
The proof is by induction on the length of $w$.  If $[w]$ is the maximal element in $W/W_J$ then $p_{[w]} ([v]) = 0$ unless $[v]=[w]$, in which case 
\[p_{[w]}([w])  = \prod_{\alpha \in \Inv{[w]}^P} \alpha = r_b(1)r_b(2)r_b(3) \cdots r_b(l)\]
by definition of flow-up classes and the exchange condition \cite[Corollary 1.4.4]{BB}. 

Assume by induction that the claim holds for all elements of $W/W_J$ with length at least $k+1$.  Let $[w] \in W/W_J$ have $\ell_P([w])=k$ and take $w$ to be in $W^J$, namely $w$ is the minimal-length representative of the coset $[w]$.  Since $[w]$ is not the longest element of $W/W_J$, there is a simple reflection $s_i$ with $[s_iw] >_P [w]$.  In particular, no reduced word for $w$ begins with $s_i$.

We now compute $p_{[s_iw]}([v])$ for each $[v] \in W/W_J$.  We will then use the formula from Corollary \ref{basic formula} to deduce $p_{[w]}([v])$. The proof is divided into two cases: when $[s_iv] <_P [v]$ and when $[s_iv] \geq_P [v]$.  Assume that $v$ has been chosen in $W^J$, namely that $v$ is minimal-length in the coset $[v]$.

If $[v] >_P [s_iv]$ then $s_iv$ is the minimal-length element of $[s_iv]$ by Lemma \ref{simple edges}. In this case we may choose $b_1=s_i$ in the reduced word for $v$.  
By induction we know
\[p_{[s_iw]}([v])= \sum_{b_{i_0}\cdots b_{i_{k}} \in R(s_iw)} r_b(i_0)\cdots r_b(i_k)\]
which decomposes into the sum
\[p_{[s_iw]}([v])= \alpha_i \sum_{b_{i_1}\cdots b_{i_k} \in R(w)} r_b(i_1)\cdots r_b(i_k)
	+  \sum_{b_{i_0}\cdots b_{i_k} \in R(s_iw), i_0 \neq 1} r_b(i_0)\cdots r_b(i_k).\]
Similarly we conclude
\[p_{[s_iw]}([s_iv])= s_i \left( \sum_{b_{i_0}\cdots b_{i_k} \in R(s_iw), i_0 \neq 1} r_b(i_0)\cdots r_b(i_k)\right). \]
Consequently
\[p_{[s_iw]}([v]) - s_ip_{[s_iw]}([s_iv]) =  \alpha_i \sum_{b_{i_1}\cdots b_{i_k} \in R(w)} r_b(i_1)\cdots r_b(i_k).\]
Corollary \ref{basic formula} states that this quantity equals $\alpha_i p_{[w]}([v])$, which proves the claim.

If $[v] \leq_P [s_iv]$ then the element $s_iv>v$ again by Lemma \ref{simple edges}.  Choose a reduced factorization $b_0b_1 \cdots b_{\ell(v)}$ for $s_iv$ so that both $b_0=s_i$ and $b=b_1 \cdots b_{\ell(v)}$ is a reduced factorization for $v$.  The inductive hypothesis shows that
\[\begin{array}{l} \displaystyle
p_{[s_iw]}([s_iv]) = \sum_{b_{i_0}\cdots b_{i_k} \in R(s_iw)} r_{b_0b}(i_0)\cdots r_{b_0b}(i_k) \\
 \displaystyle \hspace{.5in}= \alpha_i s_i \left( \sum_{b_{i_1}\cdots b_{i_k} \in R(w), i_1 \geq 1} r_b(i_1)\cdots r_b(i_k) \right) +  s_i \left( \sum_{b_{i_0}\cdots b_{i_k} \in R(s_iw), i_0 \geq 1} r_{b}(i_0)\cdots r_{b}(i_k) \right) \\ 
 \displaystyle	\hspace{.5in}= \alpha_i s_i \left( \sum_{b_{i_1}\cdots b_{i_k} \in R(w), i_1 \geq 1} r_b(i_1)\cdots r_b(i_k)	\right) +  s_ip_{[s_iw]}([v]).\end{array}\]
Again we conclude
\[\frac{p_{[s_iw]}([v])-s_ip_{[s_iw]}([s_iv])}{\alpha_i} =  \sum_{b_{i_1}\cdots b_{i_k} \in R(w)} r_b(i_1)\cdots r_b(i_k) = p_{[w]}([v]).\]
\end{proof}

Billey proved that this expression is independent of the word chosen for $v$  \cite{Bi2}.  It is not independent of the word chosen for $[w]$: for instance, both $s_1s_2s_1$ and $s_1s_2$ represent the class $[s_1s_2]$ in $G(1,3)$, but $p_{s_1s_2s_1}$ is a class of degree three while $p_{s_1s_2}$ has degree two.  While $w \in [w]$ must be minimal-length, we will show that the localizations of $p_w$ in $H^*_T(G/B)$ agree on all the different representatives in $[v]$.

To prove this, we use Kostant-Kumar's divided difference operator on the GKM presentation of $H^*_T(G/B)$ (see \cite{KK} or \cite{Bi2}).  Kostant-Kumar showed that for each $w \in W$ the image $\partial_i^{KK} p_{w} = 0$ if $ws_i>w$.  The next lemma follows immediately.

\begin{lemma}
For each $w \in W$ and flow-up class $p_w \in H^*_T(G/B)$, if $s_i$ is a simple reflection such that $ws_i > w$ then $p_w(v) = p_w(vs_i)$ for all $v \in W$. 
\end{lemma}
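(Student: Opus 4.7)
The plan is to read the statement directly off the definition of the Kostant--Kumar operator $\partial_i^{KK}$ recalled in the preceding remark, together with the Kostant--Kumar vanishing criterion cited just before the lemma.

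Concretely, the first step is to invoke the Kostant--Kumar result: since $ws_i > w$, the flow-up class $p_w \in H^*_T(G/B)$ satisfies $\partial_i^{KK} p_w = 0$ as an element of $H^*_T(G/B)$. In the GKM presentation this means that the associated tuple of polynomials is the zero tuple, so $(\partial_i^{KK} p_w)(v) = 0$ for every $v \in W$.

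Next, I substitute the explicit formula for the Kostant--Kumar operator recalled in the preceding remark, namely
\[
(\partial_i^{KK} p_w)(v) \;=\; \frac{p_w(v) - p_w(vs_i)}{-v(\alpha_i)}.
\]
Since $v(\alpha_i) \neq 0$ (it is a root, hence a nonzero element of $S(\mathfrak t^*)$), the vanishing of the left-hand side forces $p_w(v) - p_w(vs_i) = 0$, i.e.\ $p_w(v) = p_w(vs_i)$, as required.

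There is no real obstacle here: the lemma is essentially a translation of the Kostant--Kumar vanishing $\partial_i^{KK} p_w = 0$ (for $ws_i > w$) through the explicit formula for $\partial_i^{KK}$. The only subtlety is to make sure one is using Kostant--Kumar's convention for $\partial_i^{KK}$ consistent with the paper's formula (multiplication on the right by $s_i$ rather than on the left), but this is exactly the convention recorded in the remark immediately preceding the lemma, so no additional bookkeeping is required.
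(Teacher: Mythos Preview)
Your proof is correct and follows exactly the same approach as the paper: invoke the Kostant--Kumar vanishing $\partial_i^{KK} p_w = 0$ for $ws_i > w$, then read off $p_w(v) = p_w(vs_i)$ from the explicit formula for $\partial_i^{KK}$. The paper compresses this into a single sentence, while you have spelled out the intermediate step that $v(\alpha_i)\neq 0$ forces the numerator to vanish.
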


\begin{proof}
If $ws_i > w$ then $\partial_i^{KK} p_w = 0$, so $p_w(v) = p_w(vs_i)$ for all $v \in W$.
\end{proof}

\begin{corollary}
For each $w \in W^J$, flow-up class $p_w \in H^*_T(G/B)$, $v \in W$, and $u \in W_J$, 
\[p_{w}(v) = p_{w}(vu).\]
\end{corollary}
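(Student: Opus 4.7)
The plan is to reduce the claim to the previous lemma by an induction on the length of $u \in W_J$. The previous lemma asserts that whenever $ws_i > w$ in the Bruhat order on $W$, the flow-up class $p_w \in H^*_T(G/B)$ satisfies $p_w(v) = p_w(vs_i)$ for every $v \in W$. So it suffices to show that for $w \in W^J$ and every simple reflection $s_i$ with $i \in J$, we have $ws_i > w$.

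First I would argue this key point: if $w \in W^J$ and $i \in J$, then $[ws_i] = [w]$ in $W/W_J$ (since $s_i \in W_J$), so $ws_i$ belongs to the same coset as $w$. Because $w$ is by hypothesis the unique minimal-length representative of $[w]$, we have $\ell(ws_i) \geq \ell(w)$. Combined with the fact that $\ell(ws_i) \in \{\ell(w)-1, \ell(w)+1\}$ for any simple reflection, we conclude $\ell(ws_i) = \ell(w) + 1$, i.e., $ws_i > w$. Applying the previous lemma yields $p_w(v) = p_w(vs_i)$ for every $v \in W$ and every $i \in J$.

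Next I would handle general $u \in W_J$ by induction on $\ell(u)$, using a factorization $u = s_{i_1} s_{i_2} \cdots s_{i_m}$ with each $i_j \in J$. The case $\ell(u) = 0$ is trivial. For the inductive step, write $u = u' s_{i_m}$ with $u' \in W_J$ of shorter length. By the inductive hypothesis, $p_w(v) = p_w(v u')$. Now apply the single-reflection statement (which holds for the specific simple reflection $s_{i_m}$ since $i_m \in J$) with $v$ replaced by $vu'$ to obtain $p_w(vu') = p_w(v u' s_{i_m}) = p_w(vu)$. Chaining the two equalities gives the desired $p_w(v) = p_w(vu)$.

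The only subtle point is verifying $ws_i > w$ for $w \in W^J$ and $i \in J$, but this is immediate from the definition of $W^J$ as the set of minimal-length coset representatives, so there is no real obstacle in the argument.
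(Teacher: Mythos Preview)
Your proof is correct and follows essentially the same approach as the paper's: both establish $ws_i > w$ for $w \in W^J$ and $i \in J$, invoke the previous lemma to get $p_w(v) = p_w(vs_i)$, and then extend to arbitrary $u \in W_J$ using that $W_J$ is generated by these simple reflections. The only difference is that the paper cites Bj\"{o}rner--Brenti for the fact $ws_i > w$, whereas you supply a direct argument from the minimal-length property of $W^J$; your induction on $\ell(u)$ also makes explicit what the paper leaves as a one-line remark.
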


\begin{proof}
If $w \in W^J$ then $ws_i > w$ for each $s_i \in W_J$. (This is a classical result proven by, e.g., Bj\"{o}rner-Brenti \cite[Lemma 2.4.3]{BB}.)  The previous lemma showed $p_w(v)= p_w(vs_i)$ for all $v \in W$ and $s_i \in W_J$.  The subgroup $W_J$ is generated by $\{s_i: s_i \in W_J\}$, so $p_w(v)=p_w(vu)$ for all $u \in W_J$.
\end{proof}

Figure \ref{localization table} gives the localizations of the class $p_{[s_2]}$ in $H^*_T(G(2,4))$, the Grassmannian of $2$-planes in $\C^4$.  The roots $\alpha_i$ are written as $t_i - t_{i+1}$.

\begin{figure}[h]
\begin{tabular}{|c|c|c|c|c|c|c|}
\cline{1-7} minimal $w \in [w]$ & $e$ & $s_2$ & $s_3s_2$ & $s_1s_2$ & $s_1s_3s_2$ & $s_2s_1s_3s_2$ \\
\cline{1-7} $p_{[s_2]}([w])$ & 0 & $t_2 - t_3$ & $t_2-t_4$ & $t_1 - t_3$ & $t_1 - t_4$ & $t_1 - t_4 + t_2 - t_3$ \\ \hline
\end{tabular}
\caption{} \label{localization table}
\end{figure}

Billey's formula also implies the following.

\begin{corollary}
The map $\varphi: H^*_T(G/B) \rightarrow H^*_T(G/P)$ that sends the class $p = (p(v))_{v \in W}$ to $\varphi(p) = (p(v))_{v \in W^J}$ restricts to an $S(\mathfrak{t}^*)$-algebra isomorphism on the $S(\mathfrak{t}^*)$-span of $\{ p_w: w \in W^J \}$.
\end{corollary}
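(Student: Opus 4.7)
The plan is to introduce an auxiliary subspace $V' \supseteq V$ of $H^*_T(G/B)$, show that $\varphi$ is a well-defined $S(\mathfrak{t}^*)$-algebra homomorphism $V' \to H^*_T(G/P)$, and finally deduce $V = V'$, where $V$ denotes the $S(\mathfrak{t}^*)$-span of $\{p_w : w \in W^J\}$. This gives the algebra isomorphism on $V$ as a consequence.

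First I would establish the module part. By Theorem \ref{Billey generalization}, for $w \in W^J$ and any $v \in W^J$ we have
\[\varphi(p_w)([v]) = p_w(v) = p_{[w]}([v]),\]
so $\varphi(p_w) = p_{[w]}$. Since $\{p_{[w]}\}_{[w]\in W/W_J}$ is an $S(\mathfrak{t}^*)$-basis of $H^*_T(G/P)$, the restriction $\varphi|_V$ is surjective; injectivity is equally formal, since any relation $\sum c_w p_w \in \ker \varphi$ pushes forward to $\sum c_w p_{[w]} = 0$, forcing every $c_w = 0$.

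Next, define $V' = \{q \in H^*_T(G/B) : q(vu) = q(v) \textup{ for all } v \in W, \, u \in W_J\}$. By the corollary immediately preceding the claim, $V \subseteq V'$, and $V'$ is manifestly closed under the coordinate-wise multiplication of $H^*_T(G/B)$, hence a subring. To show $\varphi$ maps $V'$ into $H^*_T(G/P)$, take any edge $[w] \stackrel{\alpha}{\mapsto} [w']$ in the moment graph of $G/P$ with $w,w' \in W^J$. By Theorem \ref{moment graph description}, $s_\alpha w = w'u$ for some $u \in W_J$, and the $G/B$ moment graph has an edge between $w$ and $s_\alpha w$ labeled $\alpha$, so $q(w) - q(s_\alpha w) \in \ideal{\alpha}$. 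Combining with $W_J$-invariance $q(s_\alpha w) = q(w'u) = q(w')$ yields $\varphi(q)([w]) - \varphi(q)([w']) \in \ideal{\alpha}$, verifying the GKM condition for $G/P$. Thus $\varphi : V' \to H^*_T(G/P)$ is a well-defined $S(\mathfrak{t}^*)$-algebra homomorphism (multiplication is coordinate-wise on both sides).

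To finish, I would identify $V$ with $V'$. Given $q \in V'$, the surjectivity of $\varphi|_V$ produces $p \in V$ with $\varphi(p) = \varphi(q)$; then $p - q \in V'$ vanishes on $W^J$, and $W_J$-invariance extends this to all of $W$, so $p = q \in V$. Hence $V = V'$ is a subring of $H^*_T(G/B)$, and $\varphi|_V$ is an $S(\mathfrak{t}^*)$-algebra isomorphism onto $H^*_T(G/P)$. The main obstacle is precisely this identification: $V$ is defined as a module-theoretic span, while multiplicative closure is most cleanly detected on the ring-theoretic $V'$, and bridging the two requires both the injectivity and the surjectivity of $\varphi|_V$ already established.
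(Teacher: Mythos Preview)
Your proof is correct and follows the same essential line as the paper: use Theorem \ref{Billey generalization} to identify $\varphi(p_w) = p_{[w]}$, then invoke that multiplication is coordinate-wise in the GKM presentation. The paper's argument is terser and simply asserts that $\varphi$ is an algebra isomorphism ``by definition'' once the module isomorphism is in hand; your introduction of the auxiliary space $V'$ of $W_J$-right-invariant classes is an addition that makes explicit why $V$ is closed under multiplication, a point the paper glosses over. Both arguments rest on the same two ingredients (Billey's formula and the preceding corollary on $W_J$-invariance of localizations), so the approaches are not genuinely different---yours is a more careful execution of the same idea.
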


\begin{proof}
Billey's formula proves that for each $w \in W^J$, the class $\varphi(p_w)$ satisfies all the edge relations specified by the GKM conditions for $G/P$.  It also shows that $\varphi(p_w) = p_{[w]}$ is the flow-up class corresponding to $[w] \in W/W_J$.  This confirms the image of the map $\varphi: \langle p_w: w \in W^J \rangle \rightarrow H^*_T(G/P)$.  The algebra structure in the GKM presentation for both $H^*_T(G/B)$ and $H^*_T(G/P)$ is defined coordinate-wise, so $\varphi$ is a $S(\mathfrak{t}^*)$-algebra isomorphism by definition.
\end{proof}

This isomorphism is the inverse of the natural geometric map $H^*_T(G/P) \hookrightarrow H^*_T(G/B)$ induced from the projection $G/B \rightarrow G/P$ (e.g. \cite[Corollary 11.3.14]{Ku}). 

\section{Schubert polynomials for Grassmannians} \label{schubert poly section}

Most of this paper has focused on a geometric construction of the (equivariant) cohomology of Grassmannians.  This section will focus on an algebraic construction of the cohomology ring.  In it, we propose an explicit collection of Schubert polynomials for the (ordinary) cohomology of Grassmannians.  These Schubert polynomials are defined using the intrinsic geometry of Schubert varieties, as the average of localizations of a corresponding equivariant Schubert class.  Lascoux also defined Grassmannian double Schubert polynomials by restricting the ordinary double Schubert polynomials to Grassmannian permutations \cite{L}.  Our approach 1) clearly demonstrates the $W_J$-invariance of the Grassmannian Schubert polynomials, and 2) gives an explicit closed formula for the Grassmannian Schubert polynomials.

First we review the algebraic construction of the cohomology of the flag variety.  Recall that the Weyl group $W$ acts on the symmetric algebra $S(\mathfrak{t}^*)$, and let $I$ be the ideal in $S(\mathfrak{t}^*)$ generated by $W$-invariant elements $f \in S(\mathfrak{t}^*)$ that satisfy $f(0)=0$.  Borel and Atiyah-Hirzebruch proved that there is a $W$-equivariant ring isomorphism $\alpha: S(\mathfrak{t}^*)/I \rightarrow H^*(G/B)$ \cite{Bo}, \cite{AH}.  Seminal work of Bernstein-Gelfand-Gelfand and Demazure identified explicitly a family of so-called Schubert polynomials $\mathfrak{S}_w \in S(\mathfrak{t}^*)$ that represent the Schubert classes \cite{BGG}, \cite{D}.  

The choice of polynomial representatives of Schubert classes in $S(\mathfrak{t}^*)/I$ is not unique.  Combinatorists have discussed passionately which properties of Schubert polynomials are essential.  We use a modified version of Fomin-Kirillov's \cite{FK}: 
\begin{enumerate}
\item the polynomial $\mathfrak{S}_w$ is homogeneous of degree $\ell(w)$;
\item the polynomial $\mathfrak{S}_w$ is positive in the $\alpha_i$, namely each coefficient is a nonnegative integer;
\item the $\mathfrak{S}_w$ respect BGG divided difference operators in the sense that
\[\partial_i \mathfrak{S}_w = \left\{ \begin{array}{ll} \mathfrak{S}_{ws_i} & \textup{ if }ws_i < w \textup{ and} \\ 0 & \textup{ otherwise;} \end{array} \right.\]
\item and the product of Schubert polynomials gives the structure constants $c_{uv}^w$ in $H^*(G/B)$ up to the ideal $I$ via the expression
\[ \mathfrak{S}_u \mathfrak{S}_v = \sum c_{uv}^w \mathfrak{S}_w \mod I.\]
\end{enumerate}  
(In fact Fomin-Kirillov require positivity in a different set of variables.)

Bernstein-Gelfand-Gelfand and Demazure defined Schubert polynomials by choosing a top-degree Schubert polynomial and then using divided difference operators (and Property (3) in the previous list) to define the others \cite{BGG}, \cite{D}.  
For brevity, we refer to these polynomials as the BGG Schubert polynomials.  The BGG Schubert polynomials satisfy Properties (1)--(4).   

We will directly construct Schubert polynomials for $H^*(G/P)$.  We will use another classical result of Bernstein-Gelfand-Gelfand: that $H^*(G/P)$ is isomorphic to the $W_J$-invariant subring of $S(\mathfrak{t}^*)/I$ \cite[Theorem 5.5]{BGG}.  Our Schubert polynomials will also satisfy Properties (1)--(4).  Note that the ideal $I$ that appears in Property (4) is determined by $G$ and is independent of the parabolic $P$.

Our first proposition, due to C.~Cadman, gives a formula for the BGG Schubert polynomials in terms of the flow-up classes.

\begin{proposition} (Cadman)
The map $\kappa: H^*_T(G/B) \rightarrow S(\mathfrak{t}^*)/I$ defined by
\[ \kappa(p) = \frac{\sum_{w\in W} p(w)}{|W|}\]
is a degree-preserving $S(\mathfrak{t}^*)$-module homomorphism that restricts to a $\C$-module isomorphism on the submodule $H^*(G/B)$.  For each $w\in W$ the map sends $\kappa(p_w) = \mathfrak{S}_{w^{-1}}$.  The map $\kappa$ is $W$-equivariant, namely $\kappa (w \cdot p) = w(\kappa(p))$ for each $w \in W$ and $p \in H^*_T(G/B)$.  Moreover, the map $\kappa$ commutes with divided difference operators in the sense that $\kappa \circ \delta_i = \partial_i \circ \kappa$ for each $i$.
\end{proposition}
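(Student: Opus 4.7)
The plan is to verify the five assertions of the proposition in a convenient order, reducing the crucial identity $\kappa(p_w) = \mathfrak{S}_{w^{-1}}$ by induction to the single base case $w = w_0$ via the commutation $\kappa \circ \delta_i = \partial_i \circ \kappa$.

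First I would dispatch the module-theoretic statements directly from the definition. Since addition and the $S(\h^*)$-action on $H^*_T(G/B)$ are defined coordinate-wise, $\kappa(cp) = c\kappa(p)$; and homogeneity of $p$ means each $p(v)$ has the same polynomial degree, so $\kappa$ is degree-preserving. The $W$-equivariance follows by reindexing $v' = u^{-1}v$:
\[\kappa(u \cdot p) = \frac{1}{|W|}\sum_v u \cdot p(u^{-1}v) = u\left(\frac{1}{|W|}\sum_{v'} p(v')\right) = u(\kappa(p)).\]
Similarly, the identity $\sum_v p(s_iv) = \sum_v p(v)$ (again by reindexing) combined with $\delta_ip = (p - s_i \cdot p)/\alpha_i$ yields
\[\kappa(\delta_ip) = \frac{1}{\alpha_i}\bigl(\kappa(p) - s_i\kappa(p)\bigr) = \partial_i\kappa(p),\]
computed in $S(\h^*)$ since each $(\delta_ip)(v)$ is a polynomial by the GKM condition.

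The heart of the proposition is $\kappa(p_w) = \mathfrak{S}_{w^{-1}}$, which I would prove by induction on $\ell(w_0) - \ell(w)$. For the base case $w = w_0$: the flow-up class $p_{w_0}$ is supported only at the Bruhat-maximum $w_0$, and $p_{w_0}(w_0) = \prod_{\alpha \in \Inv{w_0}}\alpha = \prod_{\alpha > 0}\alpha$ since every positive root is a $w_0$-inversion. Hence $\kappa(p_{w_0}) = \prod_{\alpha > 0}\alpha / |W|$. The BGG normalization $\mathfrak{S}_e = 1$ together with the classical formula $\partial_{w_0}f = \sum_u (-1)^{\ell(u)}u(f) / \prod_{\alpha > 0} \alpha$ and the $W$-antisymmetry of $\prod_{\alpha > 0}\alpha$ forces $\mathfrak{S}_{w_0} \equiv \prod_{\alpha > 0}\alpha/|W| \pmod{I}$, so $\kappa(p_{w_0}) = \mathfrak{S}_{w_0} = \mathfrak{S}_{w_0^{-1}}$. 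For the inductive step, when $s_iw < w$, Proposition \ref{well-defined, nil coxeter} gives $\delta_ip_w = p_{s_iw}$, while $s_iw < w$ is equivalent to $w^{-1}s_i < w^{-1}$, so $\partial_i\mathfrak{S}_{w^{-1}} = \mathfrak{S}_{w^{-1}s_i} = \mathfrak{S}_{(s_iw)^{-1}}$. The commutation formula then yields $\kappa(p_{s_iw}) = \mathfrak{S}_{(s_iw)^{-1}}$. Since every $w \in W$ lies below $w_0$ in Bruhat order, a chain of length-decreasing simple moves $w_0 > s_{i_1}w_0 > s_{i_2}s_{i_1}w_0 > \cdots > w$ (alternatively, Theorem \ref{gen all classes} applied to $G/B$) closes the induction.

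Finally, $\kappa$ restricts to a $\C$-module isomorphism on $H^*(G/B)$ because it carries the $\C$-basis of flow-up classes $\{p_w\}_{w \in W}$ bijectively onto the $\C$-basis $\{\mathfrak{S}_{w^{-1}}\}_{w \in W} = \{\mathfrak{S}_w\}_{w \in W}$ of $S(\h^*)/I$. The main obstacle is the base case identification $\kappa(p_{w_0}) = \mathfrak{S}_{w_0}$: it requires unpacking the BGG normalization sufficiently carefully to identify the top-degree Schubert polynomial with $\prod_{\alpha > 0}\alpha / |W|$ modulo $I$, a standard but somewhat delicate calculation that deserves explicit treatment in the actual writeup.
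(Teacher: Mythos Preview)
Your proposal is correct and follows essentially the same route as the paper's proof: verify the $S(\mathfrak{t}^*)$-linearity and $W$-equivariance directly, establish $\kappa \circ \delta_i = \partial_i \circ \kappa$ by the same reindexing computation, prove $\kappa(p_w) = \mathfrak{S}_{w^{-1}}$ by downward induction from $w_0$ using that commutation, and deduce the isomorphism from basis-to-basis. The only difference is cosmetic: the paper simply \emph{takes} $\mathfrak{S}_{w_0} = \prod_{\alpha>0}\alpha/|W|$ as the defining choice of top BGG polynomial, whereas you derive it from the normalization $\mathfrak{S}_e = 1$ via the formula for $\partial_{w_0}$ and the one-dimensionality of top cohomology---extra work, but harmless.
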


\begin{proof}
The map $\kappa$ is an $S(\mathfrak{t}^*)$-module homomorphism because for each $w \in W$, the localization map $H^*_T(G/B) \rightarrow H^*_T(pt)$ given by $p \mapsto p(w)$ is an $S(\mathfrak{t}^*)$-module homomorphism.  (It preserves degree by definition.)  The submodule $H^*(G/B)$ is the $\C$-span in $H^*_T(G/B)$ of the equivariant Schubert classes $\{p_w: w \in W\}$, so $\kappa$ restricts to a $\C$-module homomorphism on $H^*(G/B)$.  For each $p \in H^*_T(G/B)$ and $w \in W$ we have
\[ \frac{\sum_{v \in W} wp(w^{-1}v)}{|W|} = w \left( \frac{\sum_{v \in W} p(w^{-1}v)}{|W|} \right) = w \left( \frac{\sum_{v\in W} p(v)}{|W|} \right)\]
which proves that $\kappa(w \cdot p) = w(\kappa(p))$.  In addition, for each $p \in H^*_T(G/B)$ 
\[ \frac{\sum_{v \in W} (\delta_i p)(v)}{|W|} = \frac{\sum_{v \in W} \left( p(v) - s_ip(s_iv) \right) }{|W| \alpha_i} = \partial_i \left( \frac{ \sum_{v \in W} p(v)}{|W|} \right).\]

We prove that $\kappa(p_w) = \mathfrak{S}_{w^{-1}}$ by induction on the length of $w$.  The BGG Schubert polynomial for the longest permutation is defined to be $\mathfrak{S}_{w_0} = \displaystyle \frac{\prod_{\alpha \in \Delta^+} \alpha}{|W|}$.  The Schubert class $p_{w_0}$ has localizations $p_{w_0}(v)=0$ if $v \neq w_0$ and $p_{w_0}(w_0) = \prod_{\alpha \in \Delta^+} \alpha$.  So the claim holds for the longest permutation.

Assume the claim holds for $w$ of length $k$.  If $s_i$ is a simple reflection with $s_iw < w$ then $\delta_i p_w = p_{s_iw}$.  Since $\kappa$ commutes with $\delta_i$, we conclude $\kappa(p_{s_iw}) = \partial_i \mathfrak{S}_{w^{-1}} = \mathfrak{S}_{w^{-1}s_i}$.  The claim holds by induction.

The $\C$-module homomorphism $\kappa: H^*(G/B) \rightarrow S(\mathfrak{t}^*)/I$ bijectively maps a free basis in the domain to a free basis in the range, so it is an isomorphism.
\end{proof}

Bernstein-Gelfand-Gelfand explicitly identified the images of their Schubert polynomials under the map of Borel and Atiyah-Hirzebruch; with our conventions, they found \cite[Section I, Theorem (ii)]{BGG}
\begin{equation} \label{BGG map}
\mathfrak{S}_w I \stackrel{\alpha}{\mapsto} p_w.
\end{equation}
The maps $\kappa$ and $\alpha$ both use the same $W$-action on $S(\mathfrak{t}^*)/I$, but {\em different} $W$-actions on $H^*_T(G/B)$.  The $W$-action on $H^*(G/B)$ that is used in \cite{Bo}, \cite{AH}, \cite{BGG} is very complicated, making it difficult to identify $W_J$-invariant polynomials.  (This $W$-action is discussed in the GKM setting in \cite{KK}.  In fact the graded $\mathbb{C}$-module $H^*(G/B)$ with the Kostant-Kumar $W$-action is the regular representation \cite{T2}.)

The $W$-action on $H^*_T(G/B)$ that we use in this paper is much simpler.  Bernstein-Gelfand-Gelfand showed that $H^*(G/P)$ is isomorphic to the $W_J$-stable elements of $S(\mathfrak{t}^*)/I$ \cite[Theorem 5.5]{BGG}.  We now construct Schubert polynomials for Grassmannians by restricting the map $\kappa$ to $W_J$-stable classes in $H^*_T(G/B)$.  To do that, we use minimal representatives of {\em left} $W_J$ cosets.

\begin{theorem} \label{schubert poly theorem}
The ring $H^*(G/P)$ is isomorphic to the subring of $S(\mathfrak{t}^*)/I$ spanned by $\{\mathfrak{S}_w: w \in W^J\}$, where each $\mathfrak{S}_w$ is the image $\kappa(p_{w^{-1}})$.  The Grassmannian Schubert polynomial $\mathfrak{S}_w$ is
\begin{enumerate}
\item a  homogeneous polynomial of degree $\ell_P([w])$;
\item positive in the simple roots $\alpha_i$; 
\item and gives the structure constants $c_{uv}^w$ in $H^*(G/P)$ up to the ideal $I$ via the expression
\[ \mathfrak{S}_u \mathfrak{S}_v = \sum c_{uv}^w \mathfrak{S}_w \mod I.\]
\end{enumerate}
\end{theorem}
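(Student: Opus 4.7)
The plan is to identify the $\C$-span of $\{\mathfrak{S}_w : w \in W^J\}$ with the $W_J$-invariant subring $(S(\mathfrak{t}^*)/I)^{W_J}$ under the standard $W$-action on $S(\mathfrak{t}^*)$, and then invoke the classical result of Bernstein-Gelfand-Gelfand cited in the paragraphs above the theorem, namely that $(S(\mathfrak{t}^*)/I)^{W_J}$ is isomorphic as a ring to $H^*(G/P)$. Properties (1)--(3) then follow from Cadman's proposition together with Billey's formula (Theorem \ref{Billey generalization}) and Equation \eqref{BGG map}.

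First, for each $w \in W^J$ and each $s_i \in W_J$, $W_J$-invariance of $\mathfrak{S}_w = \kappa(p_{w^{-1}})$ reduces by the $W$-equivariance of $\kappa$ to showing $s_i \cdot p_{w^{-1}} = p_{w^{-1}}$ in $H^*_T(G/B)$. By Lemma \ref{formula for action} applied with $P=B$, this holds precisely when $s_iw^{-1} > w^{-1}$, that is, when $w(\alpha_i) > 0$. But $w \in W^J$ means $ws_i > w$ for every $s_i \in W_J$, which is equivalent to $w(\alpha_i) > 0$, so invariance follows. Next, linear independence of $\{\mathfrak{S}_w : w \in W^J\}$ is immediate because $\kappa$ restricts to a $\C$-module isomorphism on $H^*(G/B)$ and $\{p_{w^{-1}} : w \in W^J\}$ is a subset of the flow-up basis. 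Since $|W^J| = |W/W_J| = \dim_{\C} H^*(G/P) = \dim_{\C} (S(\mathfrak{t}^*)/I)^{W_J}$ by the BGG result, the invariant polynomials span $(S(\mathfrak{t}^*)/I)^{W_J}$. As the invariant subring is automatically closed under multiplication, this $\C$-span is a subring, and the BGG ring isomorphism with $H^*(G/P)$ gives the structural claim.

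For Property (1), the map $\kappa$ is degree-preserving and $p_{w^{-1}}$ is homogeneous of degree $\ell(w^{-1}) = \ell(w) = \ell_P([w])$ for $w \in W^J$. For Property (2), Billey's formula applied in $G/B$ expresses each localization $p_{w^{-1}}(v)$ as a sum of products of positive roots $r_b(i_k)$; positive roots are non-negative integer combinations of the simple roots, so the localizations and hence their average over $W$ are non-negative in the $\alpha_i$, and integrality follows from Cadman's identification of $\mathfrak{S}_w$ with the classical BGG Schubert polynomial. For Property (3), the classical BGG isomorphism $\alpha : S(\mathfrak{t}^*)/I \to H^*(G/B)$ of Equation \eqref{BGG map} is a ring isomorphism sending $\mathfrak{S}_w \mapsto p_w$, and it restricts to a ring isomorphism from $(S(\mathfrak{t}^*)/I)^{W_J}$ onto the image of the pullback $H^*(G/P) \hookrightarrow H^*(G/B)$, which is spanned by $\{p_w : w \in W^J\}$. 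The Schubert structure constants in $H^*(G/P)$ therefore transport to the polynomial identity $\mathfrak{S}_u \mathfrak{S}_v \equiv \sum c_{uv}^w \mathfrak{S}_w \pmod{I}$.

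The main obstacle I anticipate is the bookkeeping around the two different $W$-actions on $H^*(G/B)$ (the simpler GKM action used to define $\kappa$ versus the complicated Borel action underlying $\alpha$) combined with the inversion twist $w \leftrightarrow w^{-1}$ in Cadman's formula $\kappa(p_w) = \mathfrak{S}_{w^{-1}}$. Care is needed in Property (3) to check that $\kappa(p_{w^{-1}}) = \mathfrak{S}_w$ and $\alpha(\mathfrak{S}_w) = p_w$ are used consistently, so that the structure constants appearing in the conclusion are genuinely those of the Schubert basis of $H^*(G/P)$ rather than of some twisted basis.
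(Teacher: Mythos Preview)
Your proposal is correct and follows essentially the same approach as the paper: identify the $W_J$-invariant Schubert classes via Lemma \ref{formula for action}, push them through $\kappa$ using its $W$-equivariance, count dimensions to get a basis of $(S(\mathfrak{t}^*)/I)^{W_J}$, and then use the ring isomorphism $\alpha$ (Equation \eqref{BGG map}) together with the identification of $H^*(G/P)$ with the span of $\{p_w : w \in W^J\}$ inside $H^*(G/B)$ to transport structure constants. The paper phrases the spanning argument as a graded dimension comparison rather than a total dimension count, and for Property (3) explicitly invokes Theorem \ref{Billey generalization} to check that the pullback $H^*(G/P)\hookrightarrow H^*(G/B)$ sends $p_{[w]}$ to $p_w$; you should cite that fact as well, since it is what guarantees the $c_{uv}^w$ you obtain are genuinely the $G/P$ structure constants. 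One small correction: drop the integrality claim in Property (2), since the BGG Schubert polynomials here carry a factor of $1/|W|$ (cf.\ $\mathfrak{S}_{w_0}=\prod_{\alpha>0}\alpha/|W|$), so the coefficients are nonnegative rationals, which is all the theorem asserts.
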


\begin{proof} 
The map $\kappa$ is $W$-equivariant, so it suffices to identify $W_J$-invariant elements of $H^*_T(G/B)$.  The equivariant Schubert class $p_w \in H^*_T(G/B)$ is $W_J$-invariant if and only if $s_i \cdot p_w = p_w$ for each $i \in J$, since the set $\{s_i: i \in J\}$ generates $W_J$.  By Lemma \ref{formula for action}, this is equivalent to the condition that $s_i w > w$ for each $i \in J$, namely $w$ is a minimal representative of its {\em left} $W_J$ coset.  In other words, the set $\{p_{w^{-1}}: w \in W^J\}$ is precisely the collection of $W_J$-invariant Schubert classes.  

Applying $\kappa$, we see that $\{\mathfrak{S}_w: w \in W^J\}$ is a $W_J$-invariant set of polynomials.  Each polynomial is positive in the simple roots $\alpha_i$ and homogeneous of the appropriate degree because the flow-up classes $p_w$ are.  BGG Schubert polynomials are linearly independent in $S(\mathfrak{t}^*)/I$ since they form a basis for $H^*(G/B)$, so $\{\mathfrak{S}_w: w \in W^J\}$ is linearly independent in the $W_J$-invariant submodule of $S(\mathfrak{t}^*)/I$.  The map $\kappa$ preserves degree so each $\mathfrak{S}_w$ is a homogeneous polynomial of degree $\ell(w) = \ell_P([w])$.  Comparing degrees, we conclude that $\{\mathfrak{S}_w: w \in W^J\}$ is a basis for the $W_J$-invariant submodule of $S(\mathfrak{t}^*)/I$.  

To see that these BGG Schubert polynomials satisfy the condition that
\[\mathfrak{S}_u \mathfrak{S}_v = \sum c_{uv}^w \mathfrak{S}_w \mod I\]
where $c_{uv}^w$ are the structure constants of the Schubert classes $p_{[u]}$, $p_{[v]}$, and $p_{[w]}$ in $H^*(G/P)$, consider the composition $\alpha \circ \kappa$:
\[\begin{array}{rcl} \langle p_{w^{-1}}: w \in W^J \rangle &\stackrel{\kappa}{\rightarrow}& \langle \mathfrak{S}_w: w \in W^J \rangle \\
& \stackrel{\alpha}{\rightarrow}& \langle p_w: w \in W^J \rangle. \end{array}\]
The map $\alpha$ is a $\C$-algebra isomorphism so the structure constants in $\mathfrak{S}_u \mathfrak{S}_v = \sum c_{uv}^w \mathfrak{S}_w \mod I$ agree with those in the expression $p_u p_v = \sum c_{uv}^w p_w$ for the full flag variety.  By Theorem \ref{Billey generalization}, if $w \in W^J$ then restricting the class $p_w \in H^*_T(G/B)$ to the fixed points in $W^J$ gives the class $p_{[w]} \in H^*_T(G/P)$.  Hence the coefficient $c_{[u],[v]}^{[w]}$ in the expression $p_{[u]}p_{[v]} = \sum c_{[u],[v]}^{[w]} p_{[w]}$ for the partial flag variety $G/P$ satisfies $c_{[u],[v]}^{[w]} = c_{u,v}^w$ for $u,v,w \in W^J$.
\end{proof}

We stress that these Grassmannian Schubert polynomials are completely general, and in particular depend neither on the Lie type of the group nor on particular properties of the parabolic $P$.

Note that these Grassmannian Schubert polynomials have no divided difference operators, since $\partial_i$ disrupts right cosets in $W/W_J$.  Kostant-Kumar's divided difference operator \cite{KK} is well-defined on the classes $\{p_{w^{-1}}: w \in W^J\}$, but it does not commute with the map $\kappa$.  

\subsection{Calculating Schubert polynomials} The most serious objection to Schubert polynomials without divided difference operators is: how could one compute them?  The Billey formula in fact gives a closed formula for the Schubert polynomials we propose.  (In fact, Billey's formula gives the BGG Schubert polynomials as well, though we only give examples for Grassmannians.)    Lemma \ref{formula for action} shows that if $w \in W^J$ then $\kappa(p_{w})$ can be computed either by averaging the entire class in $G/B$ or by averaging the class in $G/P$ and then summing over its $W_J$-orbit, namely 
\[\kappa(p_{w}) = \frac{\sum_{v \in W_J} v \left(\sum_{u \in W^J} p_w(v)\right)}{|W|}.\] 
(This formula does not reduce to scalar multiplication, since $W_J$ can act nontrivially on the polynomial in the parentheses.)  The Schubert polynomials in $\mathbb{P}^2$ are $1$, $\frac{1}{3}(2t_1-t_2-t_3)$, and $\frac{1}{3}(t_1-t_2)(t_1-t_3)$.  The reader may confirm that these polynomials are invariant under the subgroup generated by $s_2$.  Note that they are positive not in the $t_i$ but in the simple roots $\alpha_1 = t_1-t_2$ and $\alpha_2= t_2-t_3$.  Table \ref{table of Schubert polynomials} 
\begin{figure}[h]
\[\begin{tabular}{|c|c|}
\cline{1-2} $w \in W^J$ & $\mathfrak{S}_w$ \\
\cline{1-2} & \\
$s_2s_1s_3s_2$ & \makebox{$\frac{1}{6} \alpha_2(\alpha_1+\alpha_2)(\alpha_2+\alpha_3)(\alpha_1+\alpha_2+\alpha_3)$} \\
& \\
\cline{1-2} & \\
 $s_1s_3s_2$ & $\frac{\alpha_2(\alpha_1+\alpha_2)(\alpha_1+\alpha_2+\alpha_3)+\alpha_2(\alpha_2+\alpha_3)(\alpha_1+\alpha_2+\alpha_3)+(\alpha_1+\alpha_2)(\alpha_2+\alpha_3)(\alpha_1+\alpha_2+\alpha_3)+\alpha_2(\alpha_1+\alpha_2)(\alpha_2+\alpha_3)}{24}$ \\
  &  \\
\cline{1-2} & \\
$s_1s_2$ & $ \frac{1}{12} \left(\alpha_2(\alpha_1+\alpha_2) + (\alpha_2+\alpha_3)(\alpha_1+\alpha_2+\alpha_3) + 2 \alpha_3^2\right)$ \\
& \\
\cline{1-2} & \\
$s_3s_2$ & $\frac{1}{12} \left(\alpha_2(\alpha_2+\alpha_3) + (\alpha_1+\alpha_2)(\alpha_1+\alpha_2+\alpha_3)+ 2 \alpha_1^2\right)$ \\
& \\
\cline{1-2} & \\
$s_2$ & $\frac{1}{2} (\alpha_1 + 2\alpha_2 + \alpha_3)$ \\
& \\
\cline{1-2} $e$ & $1$ \\
\hline \end{tabular}\]
\caption{Schubert polynomials for $G(2,4)$} \label{table of Schubert polynomials}
\end{figure}
shows the Schubert polynomials for $G(2,4)$.  The Billey formula is particularly useful for maximal Grassmannians of classical type, where the moment graph has explicit combinatorial descriptions (such as in Section \ref{examples}).  

\end{document}